\newtheorem{theorem}{Theorem}[section]
\newtheorem{lemma}{Lemma}[section]
\newtheorem{remark}{Remark}[section]
\begin{document}

\title{Critical zeros of the Riemann zeta-function}
\author{H. M. Bui}
\subjclass[2010]{11M06, 11M26.}
\keywords{Riemann zeta-function, zeros on the critical line, mollifier method, moments.}
\address{School of Mathematics, University of Bristol, Bristol BS8 1TW, UK}
\email{hung.bui@bristol.ac.uk}

\begin{abstract}
In this unpublished note, we sketch an idea of using a three-piece mollifier to slightly improve the known percentages of zeros and simple zeros of the Riemann zeta-function on the critical line. This uses the recent result of Bettin \textit{et al.} [\textbf{\ref{BBLR}}] on the twisted fourth moment of the Riemann zeta-function.
\end{abstract}

\allowdisplaybreaks

\maketitle

\section{Introduction}
Let $\zeta(s)$ be the Riemann zeta-function. Let $N(T)$ denote the number of zeros of $\zeta(s)$, $\rho = \beta + i\gamma$, with $0 < \gamma \leq T$ counted with multiplicity. Also let $N_0(T)$ denote the number of such critical zeros with $\beta = 1/2$, and $N_0^*(T)$ denote the number of such critical  zeros with $\beta=1/2$ and being simple. Define $\kappa$ and $\kappa^*$ by
\begin{equation*}
 \kappa = \liminf_{T \rightarrow \infty} \frac{N_0(T)}{N(T)}, \qquad \kappa^* = \liminf_{T \rightarrow \infty} \frac{N_0^*(T)}{N(T)}.
\end{equation*}

Selberg [\textbf{\ref{S}}] was the first to prove that a positive proportion of zeros lie on the critical line. Following the approach of Levinson [\textbf{\ref{L}}] and the observation of Heath-Brown [\textbf{\ref{H-B}}], it is now known that [\textbf{\ref{BCY}},\textbf{\ref{F}}]
\begin{equation}\label{Fengs}
\kappa >0. 410725\qquad\textrm{and}\qquad \kappa^*>0.40582.
\end{equation}

\begin{remark}\emph{Bui \textit{et al.} [\textbf{\ref{BCY}}] showed that $\kappa>0.4105$ and $\kappa^*>0.40582$. Shortly after that, Feng [\textbf{\ref{F2}}; version 1] put a paper on arXiv claiming that $\kappa>0.4173$ and $\kappa^*>0.4075$. Feng also used Levinson's method, but instead chose the mollifier as a sum of various pieces of different shapes $\sum_{n\leq T^{\vartheta_1}}+\ldots+\sum_{n\leq T^{\vartheta_l}}$. At this point, Feng took $\vartheta_1=\ldots=\vartheta_l=4/7-\varepsilon$. The choice $\vartheta_1=4/7-\varepsilon$ was from Conrey [\textbf{\ref{C}}], but it was not clear why, say, $\vartheta_2<4/7$ is admissible. Conrey then emailed Feng requesting for the verification of that statement. Feng later agreed that that was a mistake, and subsequently replaced it with the second version. In this updated version [\textbf{\ref{F2}}; version 2], which is the same as the published one [\textbf{\ref{F}}], he chose $\vartheta_1=4/7-\varepsilon$ and $\vartheta_2=\ldots=\vartheta_l=1/2-\varepsilon$. With this Feng obtained $\kappa>0.4128$, but no claim on the lower bound for $\kappa^*$. Feng still did not give any explanation as to why, say, the range $\vartheta_2<1/2$ is admissible. This is doubtful and can be problematic. Note that if one just applies the result of Balasubramanian \textit{et al.} [\textbf{\ref{BCH-B}}] on the twisted second moment of the Riemann zeta-function to, say, the cross term $\int |\zeta(1/2+it)|^2\sum_{n\leq T^{\vartheta_1}}\overline{\sum_{n\leq T^{\vartheta_2}}}dt$, then one needs $\vartheta_1+\vartheta_2<1$. So without extra work, one can only take $\vartheta_1=4/7-\varepsilon$ and $\vartheta_2=\ldots=\vartheta_l=3/7-\varepsilon$ in Feng's paper. This numerically leads to the above bound $\kappa >0. 410725$ in \eqref{Fengs}.}
\end{remark}

In this paper we shall prove

\begin{theorem}
We have
\[
\kappa>0.410918\qquad and\qquad\kappa^*>0.40589.
\]
\end{theorem}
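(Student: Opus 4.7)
The plan is to implement Levinson's method with a \emph{three-piece} mollifier, using the twisted fourth moment of Bettin \emph{et al.}~[\textbf{\ref{BBLR}}] as the new analytic input that extends the admissible joint range of the mollifier pieces beyond the Balasubramanian-Conrey-Heath-Brown constraint $\vartheta_i+\vartheta_j<1$ that capped the bounds in [\textbf{\ref{BCY}}]. First I would apply Littlewood's lemma to $V(s)\psi(s)$ on a thin rectangle to the right of the critical line, obtaining the Levinson-type inequality
\[
\kappa\ \geq\ 1 \;-\; \inf_{R,Q,\psi}\,\frac{1}{R}\,\log\!\left(\frac{1}{T}\int_{1}^{T}\bigl|V\psi\bigr|^{2}\!\bigl(\tfrac{1}{2}+\tfrac{R}{L}+it\bigr)\,dt\right)\;+\;o(1),
\]
where $L=\log T$, $V(s)=Q(-\tfrac{1}{L}\tfrac{d}{ds})\zeta(s)$ with a polynomial $Q$ satisfying $Q(0)=1$, and $\psi$ is a Dirichlet polynomial mollifier. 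For the simple-zero count $\kappa^{*}$ I would append the Conrey-Ghosh-Gonek device for detecting simple zeros, which forces a twisted fourth moment into the integrand and makes the [\textbf{\ref{BBLR}}] input essential.

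Next I would take
\[
\psi(s) \;=\; \sum_{j=1}^{3}\sum_{n\leq T^{\vartheta_j}}\frac{\mu(n)}{n^{s+R/L}}\,P_j\!\left(\frac{\log(T^{\vartheta_j}/n)}{\vartheta_j\log T}\right),
\]
with polynomials $P_1,P_2,P_3$ to be optimised independently, fixing $\vartheta_1=4/7-\varepsilon$ (Conrey's admissible length for a single piece) and selecting $\vartheta_2,\vartheta_3<1/2-\varepsilon$. Expanding $|\psi|^{2}$, the diagonal terms $\mathcal{M}_{jj}=T^{-1}\int|V\psi_j|^{2}\,dt$ are treated by Conrey's twisted second moment. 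The cross terms $\mathcal{M}_{ij}$ classically need $\vartheta_i+\vartheta_j<1$; the technical heart of the proof is the evaluation of those for which this inequality fails. For these I would use the approximate functional equation to recast the integrand so that it matches the shape of the twisted fourth moment of $\zeta(1/2+it)$ handled in [\textbf{\ref{BBLR}}], then invoke their asymptotic formula and track the contribution of each resulting main term.

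Once all $\mathcal{M}_{ij}$ are in hand, the leading order of $T^{-1}\int|V\psi|^{2}\,dt$ assembles into an explicit positive-definite quadratic functional in $(Q,P_1,P_2,P_3)$ whose kernels depend on $R$ and $\vartheta_1,\vartheta_2,\vartheta_3$. As in [\textbf{\ref{BCY}}], its minimiser is characterised by a coupled Euler-Lagrange boundary value problem for polynomials, and I would solve this system numerically, tuning the free parameters to verify that the resulting lower bounds cross the thresholds $0.410725$ and $0.40582$ to reach $0.410918$ and $0.40589$. The principal obstacle is the cross-term step above: aligning the [\textbf{\ref{BBLR}}] formula with the Levinson functional and carrying the lower-order main terms through with the correct dependence on $P_1,P_2,P_3$ is delicate, while the four-polynomial numerical optimisation, although more cumbersome than the two-polynomial optimisation of [\textbf{\ref{BCY}}], is comparatively routine.
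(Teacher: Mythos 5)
There is a genuine gap, and it is located exactly where you say the ``technical heart'' lies. Your mollifier consists of three pieces of the \emph{same} shape, $\sum_{n\leq T^{\vartheta_j}}\mu(n)P_j(\cdot)n^{-s-R/L}$, differing only in length. With that choice every cross term $\mathcal{M}_{ij}$ is still a twisted \emph{second} moment of $\zeta$ against a Dirichlet polynomial of length $T^{\vartheta_i+\vartheta_j}$, so the Balasubramanian--Conrey--Heath-Brown barrier $\vartheta_i+\vartheta_j<1$ is not circumvented: there is no way to ``recast the integrand so that it matches the shape of the twisted fourth moment,'' because no extra zeta factors are present. This is precisely the construction of Feng that the paper's Remark~1.1 flags as unjustified (taking a second same-shape piece of length $T^{1/2-\varepsilon}$ alongside one of length $T^{4/7-\varepsilon}$), and without further argument it only yields the already-known bound $\kappa>0.410725$. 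The paper instead makes the fourth moment appear by changing the \emph{shape} of the third piece: $\psi_3(s)=\zeta(s+\tfrac12-\sigma_0)\sum_{n\leq y_3}\mu_2(n)P_3(\cdot)n^{\sigma_0-1/2}n^{-s}$, with $\mu_2$ the coefficients of $1/\zeta^2$. The explicit factor $\zeta(s+\tfrac12-\sigma_0)$ is what turns $\int|V\psi_3|^2$ into a twisted fourth moment (with twisting length only $y_3\leq T^{1/4-\varepsilon}$, matching the Bettin \emph{et al.} range $H,K\leq T^{1/4-\varepsilon}$), and the second piece $\psi_2$ likewise carries a $\chi$-factor as in [\textbf{\ref{BCY}}]. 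Your attribution of the fourth moment to a Conrey--Ghosh--Gonek simple-zeros device is also off target: the paper gets $\kappa^*$ by Heath-Brown's observation of taking $Q$ linear in the same mollified second moment.

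A second point you miss is that even with the correct $\psi_3$, the cross terms $I_{31}$ and $I_{23}$ involve \emph{three} zeta factors and hence require a twisted \emph{third} moment with an unbalanced twist ($H\leq T^{4/7-\varepsilon}$, $K\leq T^{1/4-\varepsilon}$). This is not supplied by [\textbf{\ref{BBLR}}]; the paper states it as Theorem~5.1 and leaves it unproved, which is why the result is explicitly flagged as not yet a theorem. Any complete write-up along your lines would have to either prove such a third-moment estimate or inherit the same conditionality. Finally, the numerical step in the paper is not an Euler--Lagrange optimisation but a direct evaluation of the explicit main-term functionals \eqref{c1}--\eqref{c31} at specific polynomials $Q,P_1,P_2,P_3$ found by search; that part of your plan is harmless but not what produces the stated decimals.
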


\begin{remark}\emph{Rigourously speaking this is not yet a theorem. Here we assume a result on the twisted third moment of the Riemann zeta-function (see Theorem 5.1 for the precise statement). We leave that unproved. It is possible that the ideas of Bettin \textit{et al.} [\textbf{\ref{BBLR}}] work in this context as well.}
\end{remark}

\section{Reduction to mean-value theorems}

\subsection{The mollifier}

To get lower bounds for $N_0(T)$ and $N_0^*(T)$ it suffices to consider a certain mollified second moment of the Riemann zeta-function and its derivatives.  This is well-known, so we shall simply state the conclusion.

Let $Q(x)$ be a real polynomial satisfying $Q(0) = 1$, and define
\begin{equation*}
 V(s) = Q\Big(-\frac{1}{\mathcal{L}} \frac{d}{ds} \Big) \zeta(s),
\end{equation*}
where for large $T$ we denote
\begin{equation*}
 \mathcal{L} = \log{T}.
\end{equation*}
Suppose $\psi(s)$ is a ``mollifier''.  Littlewood's lemma and the arithmetic-mean, geometric-mean inequality give
\begin{equation}\label{500}
 \kappa \geq 1 - \frac{1}{R} \log \bigg( \frac{1}{T} \int_{1}^{T} \big|V \psi(\sigma_0 + it)\big|^2 dt \bigg) + o(1),
\end{equation}
where $\sigma_0 =1/2 -R/\mathcal{L}$, and $R$ is a bounded positive real number to be chosen later.  Actually, by choosing $Q(x)$ to be a linear polynomial, we obtain a lower bound for the proportion of simple zeros, $\kappa^*$.

We choose a mollifier of the form
\begin{equation*}
\psi(s) =  \psi_1(s) + \psi_2(s) +\psi_3(s),
\end{equation*}
where $\psi_1$, $\psi_2$ and $\psi_3$ are mollifiers of quite different shapes.  Here $\psi_1$ is the usual mollifier
\begin{equation*}
 \psi_1(s) = \sum_{n \leq y_1} \frac{\mu(n) P_1\big(\frac{\log y_1/n}{\log y_1}\big)n^{\sigma_0-1/2}}{n^{s}}
\end{equation*}
and $\psi_2$ is the second piece mollifier used in [\textbf{\ref{BCY}}]
\[
\psi_2(s)=\chi(s+\tfrac{1}{2}-\sigma_0)\sum_{mn\leq y_2} \frac{\mu_2(m) P_2\big(\frac{\log y_2/mn}{\log y_2}\big)m^{\sigma_0-1/2}n^{1/2-\sigma_0}}{m^{s}n^{1-s}},
\]
where $\mu_2(m)$ are the coefficients of $1/\zeta^2(s)$, and $P_1(x)=\sum_{j}a_jx^j$ and $P_2(x)=\sum_{j}b_jx^j$ are certain polynomials satisfying $P_1(0)=0$ and $P_2(0)=P_2'(0)=P_2''(0)=0$. For the third mollifier, we take\footnote{It was originally an idea of K. Soundararajan to make use of the twisted fourth moment of the Riemann zeta-function and use a two-piece mollifier of the form $\psi_1+\psi_3$. The author learned about this from Brian Conrey, Chris Hughes and K. Soundararajan. However, the implied proportion of critical zeros using this two-piece mollifier has never been worked out explicitly.}
\begin{equation*}
 \psi_3(s) =\zeta(s+\tfrac{1}{2}-\sigma_0)\sum_{n \leq y_3} \frac{\mu_2(n)P_3\big(\frac{\log y_3/n}{\log y_3}\big)n^{\sigma_0-1/2}}{n^{s}},
\end{equation*}
where $P_3(x) = \sum_j c_j x^j$ is a certain polynomial satisfying $P_3(0) = P_3'(0)=P_3''(0)=P_3'''(0)= 0$. We also require the condition that $P_1(1)+P_3(1)=1$ (see the below remark). Throughout the paper we denote $y_1 = T^{\vartheta_1}$, $y_2 = T^{\vartheta_2}$ and $y_3 = T^{\vartheta_3}$, where $0<\vartheta_3<\vartheta_2 < \vartheta_1<1$ (we shall see later what conditions are required on $\vartheta_1$, $\vartheta_2$ and $\vartheta_3$). Note that formally
\[
\zeta(s)\sum_{n=1}^{\infty}\frac{\mu_2(n)}{n^s}=\frac{1}{\zeta(s)},
\]
which explains why $\psi_3(s)$ may also be a useful choice of a mollifier.

\begin{remark}
\emph{We can apply Levinson's method (see, for example, Appendix A of [\textbf{\ref{CIS}}]) to our choice of $\psi=\psi_1+\psi_2+\psi_3$. It is important to note that when applying Littlewood's lemma, we need to estimate the integral on the right side of a rectangle. Assume that $\psi_1(s)+\psi_3(s)$ can be expressed as a Dirichlet series, $\psi_1(s)+\psi_3(s)=\sum_{n}a(n)n^{-s}$, that integral is negligible given that $a(1)=1$. That is why we require $P_1(1)+P_3(1)=1$. }
\end{remark}

\subsection{A smoothing argument}

It simplifies some calculations to smooth out the integral in \eqref{500}. We introduce a smooth function $w(t)$ with the following properties
\begin{eqnarray*}
&& \textrm{i)}\ 0\leq w(t)\leq 1\ \textrm{for all}\ t\in\mathbb{R},\\
&& \textrm{ii)}\ w\ \textrm{has compact support in}\ [T/4,2T],\\
&& \textrm{iii)}\ w^{(j)}(t)\ll \Delta^{-j}\ \textrm{for each}\ j=0,1,2\ldots,\ \textrm{where}\ \Delta=T\mathcal{L}^{-1}.
\end{eqnarray*}

\begin{theorem}\label{main}
Suppose $\vartheta_1<4/7$, $\vartheta_2<1/2$ and $\vartheta_3 < 1/4$.  Then we have
\begin{equation}
\int_{-\infty}^{\infty} \big|V \psi(\sigma_0 + it)\big|^2w(t) dt = c(P_1,P_2,P_3,Q,R,\vartheta_1, \vartheta_2,\vartheta_3)\widehat{w}(0) +O_\varepsilon(T\mathcal{L}^{-1+\varepsilon}),
\end{equation}
where $c(P_1,P_2,P_3,Q,R,\vartheta_1, \vartheta_2,\vartheta_3) = c_1+c_2+c_3 + 2c_{12} + 2c_{23}+2c_{31}$, and the $c_{i}$ and $c_{ij}$ are given below by \eqref{c1}--\eqref{c31}.
\end{theorem}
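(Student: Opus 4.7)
The plan is to expand
\[
|V\psi|^2 = \sum_{i=1}^{3}|V\psi_i|^2 + 2\sum_{1\leq i<j\leq 3}\mathrm{Re}\bigl(V\psi_i\,\overline{V\psi_j}\bigr),
\]
and evaluate each of the six resulting integrals against $w(t)$ separately; the diagonals will contribute $c_i\,\widehat w(0)$ and the cross terms $2c_{ij}\,\widehat w(0)$. In every case the integrand is a product of $\zeta$-values at arguments near $\sigma_0+it$ together with short Dirichlet polynomials, and the strategy will be to feed each integral into the appropriate twisted moment theorem and then extract the main term via the Levinson/Conrey contour-shift machinery (as in [BCY, Section 4] or the Appendix of [CIS]). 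The error $O_\varepsilon(T\mathcal{L}^{-1+\varepsilon})$ will come from whichever of these moment bounds is weakest.

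The three ``second-moment'' pieces $c_1$, $c_2$ and $c_{12}$ have already been evaluated in [BCY]. The diagonal $c_1$ is a twisted second moment of $\zeta$ with coefficients supported on integers up to $y_1^2$, admissible under Conrey's Kloosterman-sum refinement of the Balasubramanian--Conrey--Heath-Brown asymptotic whenever $\vartheta_1<4/7$. In $c_2$ and $c_{12}$ the $\chi$-factor of $\psi_2$ acts, via the functional equation, as a reflection, so the effective twist length is $T^{2\vartheta_2}$ and $\vartheta_2<1/2$ suffices. I would simply transcribe the formulas of [BCY], adjusting for the shift $\sigma_0\neq 1/2$.

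The new diagonal $c_3$ is a twisted fourth moment: $V\psi_3$ contains two copies of $\zeta$ (one from the operator $V$ and one from the factor $\zeta(s+\tfrac{1}{2}-\sigma_0)$ inside $\psi_3$), so $|V\psi_3|^2$ produces $|\zeta(\tfrac{1}{2}+it)|^4$ times a double Dirichlet polynomial, reducing the task to
\[
\int|\zeta(\tfrac{1}{2}+it)|^4\,(m/n)^{-it}\,w(t)\,dt,\qquad m,n\leq y_3.
\]
This is precisely the shape handled by Bettin \textit{et al.}\ [BBLR], whose asymptotic is admissible provided $2\vartheta_3<1/2$, i.e.\ $\vartheta_3<1/4$; a standard shifted-fourth-moment contour computation then extracts $c_3$.

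The remaining cross pieces $c_{23}$ and $c_{31}$ are genuinely three-$\zeta$ integrals, and they constitute the main obstacle. For $c_{31}$ the $\zeta$ inside $\psi_3$ combines with the two $\zeta$'s coming from $V\overline V$ to give $|\zeta(\sigma_0+it)|^2\,\zeta(\tfrac{1}{2}+it)$ twisted by a Dirichlet polynomial in $y_1$ and $y_3$. For $c_{23}$ the functional-equation identity $\chi(\tfrac{1}{2}+it)\zeta(\tfrac{1}{2}-it)=\zeta(\tfrac{1}{2}+it)$ turns $\chi(s+\tfrac{1}{2}-\sigma_0)\overline{\zeta(s+\tfrac{1}{2}-\sigma_0)}$ into a $\zeta$ on the critical line, producing the same triple-$\zeta$ structure twisted by a Dirichlet polynomial in $y_2$ and $y_3$. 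Both therefore reduce to the twisted third moment of $\zeta$ supplied (without proof) by Theorem 5.1, whose range of validity is exactly $\vartheta_2<1/2$ and $\vartheta_3<1/4$---so the hypotheses of the present theorem are the correct ones. Granting that input, the same contour-shift argument as in the other pieces produces $c_{23}$ and $c_{31}$, and summing all six main terms delivers the claimed $c = c_1+c_2+c_3+2c_{12}+2c_{23}+2c_{31}$.
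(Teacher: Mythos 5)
Your proposal is correct and follows essentially the same route as the paper: expand $|V\psi|^2$ into the three diagonal and three cross terms, import $c_1$, $c_2$, $c_{12}$ from Conrey and [BCY], evaluate $I_3$ via the twisted fourth moment of [BBLR] (forcing $\vartheta_3<1/4$), and evaluate $I_{23}$, $I_{31}$ via the assumed twisted third moment, extracting main terms through the shifted-moment/differential-operator machinery. The identification of the relevant $\zeta$-factors in each piece (including the functional-equation conversion of the $\chi$ in $\psi_2$ against $\overline{\psi_3}$) matches the paper's decomposition exactly.
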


\subsection{Numerical evaluations}

It is a standard exercise to deduce from Theorem \ref{main} the unsmoothed version
\begin{equation*}
\int_{1}^{T}\big|V \psi(\sigma_0 + it)\big|^2 dt = c(P_1,P_2,P_3,Q,R,\vartheta_1, \vartheta_2,\vartheta_3)T +O_\varepsilon(T\mathcal{L}^{-1+\varepsilon}).
\end{equation*}
Hence
\begin{equation*}
 \kappa \geq 1 - \frac{\log c(P_1,P_2,P_3,Q,R,\vartheta_1, \vartheta_2,\vartheta_3)}{R} + o(1).
\end{equation*}

Using Mathematica, with $R=1.26$,
\[
Q(x)=0.49068 + 0.61077(1 - 2x) -  0.14199(1 - 2x)^3 +0.04054(1 - 2x)^5,
\]
\[
P_1(x)=0.83651x + 0.09758x^2 - 0.29393x^3 + 0.73372x^4 - 0.3753x^5,
\]
\[
P_2(x)=0.0237x^3 - 0.00744x^4 + 0.00174x^5
\]
and
\[
P_3(x)=0.00155x^4-0.00013x^5,
\]
we get $\kappa>0.410918$. To get $\kappa^*>0.40589$ we take $R=1.12$, $Q(x)=1-1.03232x$,
\[
P_1(x)=0.82653x + 0.02626x^2 - 0.00774x^3 + 0.34803x^4 - 0.19371x^5,
\]
\[
P_2(x)=0.0324x^3 - 0.00759x^4 + 0.00742x^5
\]
and
\[
P_3(x)=0.00094x^4-0.00031x^5.
\]

\section{The mean-value results}

Writing $\psi = \psi_1 + \psi_2+\psi_3$ and opening the square, we get
\begin{eqnarray*}
 \int \big|V \psi\big|^2w& =& \int \big| V \psi_1\big|^2w+ \int \big|V \psi_2\big|^2w+ \int \big|V \psi_3\big|^2w\\
&&\qquad + 2\textrm{Re}\bigg\{\int \big|V\big|^2 \psi_1\overline{ \psi_2}w\bigg\} + 2\textrm{Re}\bigg\{\int \big|V\big|^2 \psi_2\overline{ \psi_3}w\bigg\}+2\textrm{Re}\bigg\{\int \big|V\big|^2 \psi_3\overline{ \psi_1}w\bigg\} \\
&=:& I_1+I_2+I_3+ 2\textrm{Re}(I_{12}) +2\textrm{Re}(I_{23})+2\textrm{Re}(I_{31})  .
\end{eqnarray*}
We shall compute the integrals in turn.  It turns out that $I_{12}$, $I_{23}$ and $I_{31}$ are asymptotically real.

\subsection{The main terms}

First we quote Theorem 2 of Conrey [\textbf{\ref{C}}]. Conrey's theorem is for the unsmoothed version, but the following smoothed version follows easily from that.

\begin{theorem}[Conrey]  Suppose $\vartheta_1 < 4/7$ and $P_1(0)=0$. Then we have
\begin{equation*}
\int_{-\infty}^{\infty} \big |V\psi_1(\sigma_0 + it)\big|^2w(t) dt =c_{1}\widehat{w}(0)+O_\varepsilon(T\mathcal{L}^{-1+\varepsilon}),
\end{equation*}
where
\begin{equation}\label{c1}
 c_{1} = P_1(1)^2 + \tfrac{1}{\vartheta_1} \int_0^{1} \int_0^{1} e^{2Rt}\Big (Q(t) P_1'(u) + \vartheta_1 Q'(t) P_1(u) + \vartheta_1 R Q(t) P_1(u)\Big)^2 dtdu.
\end{equation}
\end{theorem}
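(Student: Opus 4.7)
The plan is to open the square in $\psi_1$ and reduce the integral to the twisted shifted second moment of $\zeta$, to which the Balasubramanian--Conrey--Heath-Brown formula (in its extension by Conrey to the enlarged range $hk<T^{8/7-\varepsilon}$) can be applied; the resulting sum over the mollifier coefficients is then evaluated by a standard contour argument.

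First I would write $\psi_1(s)=\sum_{h\leq y_1}\alpha(h)h^{-s}$ with $\alpha(h)=\mu(h)P_1(\log(y_1/h)/\log y_1)h^{\sigma_0-1/2}$, and expand the square to obtain
\[
\int_{-\infty}^{\infty}|V\psi_1(\sigma_0+it)|^2 w(t)\,dt=\sum_{h,k\leq y_1}\frac{\alpha(h)\alpha(k)}{(hk)^{\sigma_0}}\int_{-\infty}^{\infty}|V(\sigma_0+it)|^2 (h/k)^{-it}w(t)\,dt.
\]
To handle the differential operator $V=Q(-\mathcal{L}^{-1}d/ds)\zeta$, I would replace it by complex parameter shifts, writing the inner integral as
\[
Q\!\left(-\tfrac{1}{\mathcal{L}}\tfrac{\partial}{\partial\alpha}\right)Q\!\left(-\tfrac{1}{\mathcal{L}}\tfrac{\partial}{\partial\beta}\right)\int_{-\infty}^{\infty}\zeta(\sigma_0+\alpha+it)\overline{\zeta(\sigma_0+\beta+it)}(h/k)^{-it}w(t)\,dt\,\bigg|_{\alpha=\beta=0},
\]
so that the whole problem reduces to the shifted twisted second moment of $\zeta$.

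The next step is to invoke the asymptotic formula for this shifted twisted moment, valid for $hk\leq T^{8/7-\varepsilon}$. The hypothesis $\vartheta_1<4/7$ is exactly what places all the relevant twists in the admissible range. The formula produces a diagonal main term together with a swap-shifts off-diagonal piece, both expressed as short contour integrals in $\alpha,\beta$ with $(h,k)$-dependent Euler products. Plugging this back and carrying out the $h,k$ summation is standard: the mollifier identity $\sum_h\mu(h)h^{-s}=1/\zeta(s)$ cancels the $\zeta$-poles in the relevant Mellin/Perron integrals, so the residues can be read off explicitly. The diagonal contribution produces the $P_1(1)^2$ term, while the off-diagonal, after a change of variables where $u$ parametrises the polynomial support of $P_1$ and $t$ parametrises the $Q$-operator shifts, yields the stated double integral. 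The three-term combination $Q(t)P_1'(u)+\vartheta_1 Q'(t)P_1(u)+\vartheta_1 R Q(t)P_1(u)$ is the natural derivative produced when $\partial_\alpha,\partial_\beta$ act on the product of the mollifier polynomial, the $Q$-polynomial, and the exponential factor $e^{Rt}$ that appears because $\sigma_0-\tfrac12=-R/\mathcal{L}$.

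The main obstacle is really the analytic input: establishing the twisted second moment asymptotic uniformly in $h,k$ all the way up to $T^{4/7-\varepsilon}$. This is Conrey's deep result, which hinges on nontrivial estimates for Kloosterman sums via the Weil bound, and it is precisely the step that constrains the length $\vartheta_1<4/7$. Everything else is bookkeeping: the smoothing by $w$ only affects the main term through $\widehat{w}(0)$, and the error $O_{\varepsilon}(T\mathcal{L}^{-1+\varepsilon})$ is inherited from the power saving in the moment asymptotic after the logarithmic loss incurred by summing over $h$ and $k$.
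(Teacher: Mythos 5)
The paper does not prove this statement at all: it is quoted verbatim as Theorem 2 of Conrey [\textbf{4}] (in smoothed form), so the only honest comparison is with Conrey's original argument and with the template the paper uses for its new lemmas (open the square, pass to shifted parameters $\alpha,\beta$, apply a twisted moment theorem, extract the main term from Euler products and small contour integrals, and use the $1/\zeta$ cancellation in a standard zero-free region). Your sketch follows exactly that template, so the route is the right one and matches both Conrey and the paper's own methodology for $I_{31}$, $I_{23}$, $I_3$.

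Two inaccuracies are worth flagging. First, the deep input is misattributed: the twisted second moment in the range $hk\leq T^{8/7-\varepsilon}$ (equivalently $\vartheta_1<4/7$) does \emph{not} follow from the Weil bound; Balasubramanian--Conrey--Heath-Brown's treatment reaches only $hk\leq T^{1-\varepsilon}$, i.e.\ $\vartheta_1<1/2$, and Conrey's passage to $4/7$ requires the Deshouillers--Iwaniec estimates for averages of Kloosterman sums coming from the spectral theory of automorphic forms (Kuznetsov's formula). This distinction is not cosmetic in the present paper: its final section shows that the Weil bound alone yields only the weaker constraint $\tfrac{3}{2}\vartheta_1+\vartheta_3<1$ for the third-moment analogue, and Remark 1.1 is entirely about how delicate these admissible ranges are. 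Second, the attribution ``diagonal $\to P_1(1)^2$, off-diagonal $\to$ the double integral'' is not how the main term decomposes. In the shifted formalism the main term is the sum of a direct piece $Z_{\alpha,\beta}(h,k)$ and a swapped piece $(t/2\pi)^{-(\alpha+\beta)}Z_{-\beta,-\alpha}(h,k)$ (both arising from the diagonal of the approximate functional equation); the $t$-integral in \eqref{c1} is produced by combining the two via $\frac{1-X^{-(\alpha+\beta)}}{\alpha+\beta}=\log X\int_0^1X^{-(\alpha+\beta)t}\,dt$, and $P_1(1)^2$ appears as a shift-independent boundary term of the resulting integral representation, not as the contribution of the $h=k$ terms. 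Neither point invalidates the overall plan, but as written the sketch would mislead a reader about where the hypothesis $\vartheta_1<4/7$ comes from and about the provenance of the two pieces of $c_1$.
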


The terms $I_{12}$ and $I_2$ are evaluated in [\textbf{\ref{BCY}}].

\begin{theorem}[Bui, Conrey and Young]
Suppose $\vartheta_2 < \vartheta_1  < 4/7$ and $P_1(0)=0=P_2(0)=P_2'(0)$. Then we have
\begin{equation*}
\int_{-\infty}^{\infty}  |V|^2 \psi_1\overline{\psi_2}   (\sigma_0 + it)w(t) dt = c_{12} \widehat{w}(0) + O_\varepsilon(T\mathcal{L}^{-1+\varepsilon}),
\end{equation*}
where
\begin{eqnarray}
\label{c12}
&&\!\!\!\!\!\!\!\!\!\!\!\!c_{12}=  \tfrac{4 \vartheta_2^2}{\vartheta_1^2}
 \frac{d^2}{dx_1 dx_2}\bigg[ \int_0^{1} \mathop{\int \int}_{\substack{t_1+t_2 \leq u \\t_1,t_2\geq0}} 
   e^{R\big(1-\vartheta_1(x_1-x_2) + \vartheta_2(t_1-t_2)\big)}(1-u)  Q(-\vartheta_1x_1  + \vartheta_2t_1 )
\nonumber\\
 &&\!\!\!\!\!\!\!\!\!\!\!\!\qquad Q(1+\vartheta_1x_2  -\vartheta_2t_2 ) P_1\Big(x_1+x_2+ 1-\tfrac{ \vartheta_2}{\vartheta_1}(1-u)\Big) P_2''(u-t_1-t_2) dt_1dt_2du \bigg]_{x_1=x_2=0} .
\end{eqnarray}
\end{theorem}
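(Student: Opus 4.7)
\emph{Proof plan.} I would follow the strategy of [\textbf{\ref{BCY}}], tracking the smoothing weight $w$ through the argument. The first move is to rewrite the integrand as a twisted second moment of $\zeta$. Expressing the product $V(\sigma_0+it)\,\overline{V(\sigma_0+it)}$ as a double contour integral
\[
V\,\overline V \;=\; \frac{1}{(2\pi i)^2}\oint\oint \zeta(\sigma_0+z_1+it)\,\zeta(\sigma_0+z_2-it)\,K(z_1,z_2)\,dz_1\,dz_2
\]
over small circles, where $K$ packages the action of the differential operator $Q(-\mathcal{L}^{-1}d/ds)$, and using the factor $\chi(s+\tfrac{1}{2}-\sigma_0)$ hidden in $\psi_2$ together with the functional equation $\zeta(s)=\chi(s)\zeta(1-s)$, the combination $\zeta(\sigma_0+z_2-it)\,\overline{\chi(\sigma_0+it)}$ transforms into $\zeta(1-\sigma_0-z_2+it)$ up to a smooth phase of absolute value one. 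Opening the Dirichlet series $\psi_1(s)=\sum_h\alpha_h\,h^{-s}$ and writing the remaining part of $\overline{\psi_2(\sigma_0+it)}$ as $\sum_{m,n}\beta_{m,n}(n/m)^{it}$, the $t$-integrand becomes
\[
\zeta(\tfrac{1}{2}+z_1'+it)\,\zeta(\tfrac{1}{2}+z_2'-it)\,\sum_{h,m,n}\alpha_h\,\beta_{m,n}\,(hm/n)^{-it}\,w(t),
\]
with small shifts $z_1',z_2'$ linear in $z_1,z_2,R/\mathcal{L}$.

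Next I would apply the twisted second moment theorem of Balasubramanian--Conrey--Heath-Brown [\textbf{\ref{BCH-B}}] to the inner $t$-integral. The length of the twist is at most $y_1y_2\leq T^{\vartheta_1+\vartheta_2}$, so an asymptotic evaluation is available provided $\vartheta_1+\vartheta_2<1$; this admissibility is implicit in the hypotheses used in [\textbf{\ref{BCY}}]. The weight $w(t)$ passes through the BCH-B argument essentially verbatim: where the unsmoothed BCY calculation produces a main term proportional to $T$, the smoothed version produces the same constant multiplied by $\widehat{w}(0)=\int_{\mathbb{R}} w(t)\,dt$, and the relative error $\mathcal{L}^{-1+\varepsilon}$ is unchanged.

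The final step is to evaluate the resulting arithmetic sums over $h,m,n$. I would replace the polynomial weights $P_1\bigl(\tfrac{\log y_1/h}{\log y_1}\bigr)$ and $P_2\bigl(\tfrac{\log y_2/mn}{\log y_2}\bigr)$ by their short Mellin contour representations and recognise $\sum\mu(h)h^{-s}=1/\zeta(s)$ and $\sum\mu_2(m)m^{-s}=1/\zeta(s)^2$; the resulting poles at $s=1$ are absorbed into the residue calculus. Collapsing the two outer contours in $z_1',z_2'$ introduces the residual variables $x_1,x_2$ that appear inside the $Q$-factors of \eqref{c12}, and the two surviving residues are collected by the differential operator $\tfrac{d^2}{dx_1\,dx_2}\big|_{x_1=x_2=0}$; the factor $P_2''$ in the integrand emerges because the Mellin transform of $P_2$ is differentiated twice against the double pole coming from $1/\zeta^2$ together with the simple pole of $\zeta$. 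The main obstacle is exactly this pole and residue bookkeeping: one has to verify that the double-pole contribution from $1/\zeta(s)^2$, the simple-pole contribution from $\zeta$, and the shifted-divisor off-diagonal main term produced by BCH-B all combine to leave precisely the integrand of \eqref{c12}, with no unmatched polar parts. Once that cancellation is checked, the claimed asymptotic with the stated constant $c_{12}$ follows immediately.
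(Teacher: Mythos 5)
First, a point of reference: the paper does not prove this statement. It is quoted verbatim from Bui--Conrey--Young (the text says explicitly that ``the terms $I_{12}$ and $I_2$ are evaluated in [BCY]''), and the new proofs in this paper concern only $I_3$, $I_{23}$ and $I_{31}$. So your sketch must be measured against the argument of [BCY], whose overall architecture (contour representation of $V\overline{V}$ via shift parameters, Mellin representation of the polynomial weights, residue bookkeeping that produces $P_2''$ and the operator $\frac{d^2}{dx_1dx_2}\big|_{x_1=x_2=0}$) you do reproduce correctly in outline.

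The genuine gap is in your central analytic step. You propose to evaluate the inner $t$-integral by the Balasubramanian--Conrey--Heath-Brown twisted second moment, with twist of length $y_1y_2\le T^{\vartheta_1+\vartheta_2}$, under the condition $\vartheta_1+\vartheta_2<1$. This cannot be right, for two reasons. First, the theorem's hypothesis is only $\vartheta_2<\vartheta_1<4/7$, which allows $\vartheta_1+\vartheta_2$ up to $8/7$; in the actual application one takes $\vartheta_1$ near $4/7$ and $\vartheta_2$ near $1/2$, so $\vartheta_1+\vartheta_2\approx 15/14>1$ and your argument would not cover the parameters used anywhere in the paper (indeed, Remark 1.1 makes precisely this point: a bare application of BCH-B to a cross term of two ordinary mollifiers forces $\vartheta_1+\vartheta_2<1$, which is exactly the restriction the $\chi$-twisted mollifier $\psi_2$ is designed to evade). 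Second, after you apply the functional equation to absorb $\overline{\chi(\tfrac12+it)}$, as you yourself describe, the two zeta factors end up on the \emph{same} side (both carrying $+it$, up to a rapidly oscillating unimodular factor $e^{4i\vartheta(t)}$ which is emphatically not harmless if left in place); yet two lines later you write the integrand as $\zeta(\tfrac12+z_1'+it)\zeta(\tfrac12+z_2'-it)$ with opposite signs of $it$, which is inconsistent. The mean value that actually arises is of the form $\int\zeta(\tfrac12+a+it)\zeta(\tfrac12+b+it)\,N^{-it}w(t)\,dt$ with $N=hn/m$ rational, a ``same-side'' twisted $\zeta^2$ integral whose main term comes from a direct diagonal extraction (the divisor sum localizes at $k=hn/m$) with no shifted-divisor off-diagonal problem; this is a different, and in the relevant direction more forgiving, computation than BCH-B, and it is what permits $\vartheta_2$ to range up to $\vartheta_1<4/7$ independently of the sum $\vartheta_1+\vartheta_2$. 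As written, your plan either misidentifies the mean value or, if patched by dropping the functional-equation step, proves the theorem only on a parameter region that excludes its intended use.
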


\begin{theorem}[Bui, Conrey and Young]
\label{thm:mainterm2}
Suppose $\vartheta_2 < \tfrac{1}{2}$ and $P_2(0)=P_2'(0)=P_2''(0)=0$. Then we have
\begin{equation*}
\int_{-\infty}^{\infty}  \big |V\psi_2(\sigma_0 + it)\big|^2w(t) dt = c_2 \widehat{w}(0) + O_\varepsilon(T\mathcal{L}^{-1+\varepsilon}),
\end{equation*}
where
\begin{eqnarray}\label{c2}
&&c_2 = \frac{2}{3\vartheta_2} \frac{d^4}{dx_1^2 dx_2^2}
\bigg[\mathop{\int}_{[0,1]^4}e^{-\vartheta_2 R\big(x_1+x_2-t_1(x_1+u)-t_2(x_2+u)\big)+2Rt_3\big(1+\vartheta_2(x_1+x_2-t_1(x_1+u)-t_2(x_2+u))\big)}\nonumber\\
&&\qquad \Big(1+\vartheta_2\big(x_1+x_2-t_1(x_1+u)-t_2(x_2+u)\big)\Big)(x_1+u)(x_2+u)(1-u)^{4}\nonumber  \\
&&\qquad\qquad Q\bigg(\vartheta_2\big(-x_1 + t_2(x_2+u)\big) + t_3\Big(1+\vartheta_2\big(x_1+x_2-t_1(x_1+u)-t_2(x_2+u)\big)\Big)\bigg)\nonumber \\
&&\qquad\qquad\qquad Q\bigg(\vartheta_2\big(-x_2 + t_1(x_1+u)\big) + t_3\Big(1+\vartheta_2\big(x_1+x_2-t_1(x_1+u)-t_2(x_2+u)\big)\Big)\bigg)\nonumber\\
&&\qquad \qquad\qquad\qquad P_2''\big((x_1 + u)(1-t_1))\big) P_2''\big((x_2+u)(1-t_2)\big) dt_1 dt_2 dt_3 du\bigg]_{x_1=x_2=0}.
\end{eqnarray}
\end{theorem}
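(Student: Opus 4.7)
My plan is to expand $|V\psi_2|^2$, reduce the integral to a weighted sum of shifted twisted second moments of $\zeta$, and extract the main term by Mellin--Barnes and residue calculus. Opening the square is clean because $|\chi(\sigma_0+it+\tfrac12-\sigma_0)|=|\chi(\tfrac12+it)|=1$, so the $\chi$-factors in $\psi_2\overline{\psi_2}$ cancel in modulus and
\[
|\psi_2(\sigma_0+it)|^2 = \sum_{\substack{m_1n_1\leq y_2\\ m_2n_2\leq y_2}} \frac{\mu_2(m_1)\mu_2(m_2)\, P_2\!\left(\tfrac{\log y_2/m_1n_1}{\log y_2}\right) P_2\!\left(\tfrac{\log y_2/m_2n_2}{\log y_2}\right)}{(m_1m_2)^{\sigma_0}(n_1n_2)^{1-\sigma_0}}\Big(\frac{m_2 n_1}{m_1 n_2}\Big)^{it}.
\]
The twist size here satisfies $(m_1n_2)(m_2n_1)=(m_1m_2)(n_1n_2)\leq y_2^2=T^{2\vartheta_2}$, so the hypothesis $\vartheta_2<\tfrac12$ is precisely what places every twist inside the range of applicability of the twisted second moment of Balasubramanian--Conrey--Heath-Brown.

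To handle the differential operator inside $V$ I would use the standard Levinson device, writing
\[
|V(\sigma_0+it)|^2 = \Big[\, Q\!\left(-\tfrac{1}{\mathcal L}\tfrac{d}{d\alpha}\right) Q\!\left(-\tfrac{1}{\mathcal L}\tfrac{d}{d\beta}\right)\, \zeta(\sigma_0+\alpha+it)\overline{\zeta(\sigma_0+\beta+it)}\,\Big]_{\alpha=\beta=0},
\]
so that, for $h=m_2n_1$ and $k=m_1n_2$, the problem reduces to the uniform asymptotic evaluation of
\[
J(\alpha,\beta;h,k) := \int \zeta(\sigma_0+\alpha+it)\,\overline{\zeta(\sigma_0+\beta+it)}\,\Big(\frac{h}{k}\Big)^{it} w(t)\, dt
\]
with $(\alpha,\beta)$ holomorphic in a fixed neighbourhood of the origin. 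The shifted Balasubramanian--Conrey--Heath-Brown formula supplies this, giving a ``diagonal'' piece built from $\zeta(1+\alpha-\beta)$ together with a ``swap'' piece produced by the functional equation that carries a factor of the form $(tk/2\pi h)^{-\alpha-\beta}$ and an Estermann-type arithmetic weight, each uniformly small in the shifts.

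Substituting this expansion back and using the Mellin representation $P_2(\tfrac{\log y_2/mn}{\log y_2}) = \tfrac{1}{2\pi i}\int_{(c)} (y_2/mn)^z\,\widetilde{P}_2(z)\, dz$ lets the sums over $m_1,m_2,n_1,n_2$ be evaluated as Euler products. Combining these with the $\zeta$-factors from $J$ produces, at the origin of the two Mellin variables, poles of order at most three coming from the two copies of $1/\zeta^2$; the three vanishing conditions $P_2(0)=P_2'(0)=P_2''(0)=0$ are exactly what kill those polar parts, leaving a clean fourth-order residue whose numerator encodes $P_2''$ on each side --- this is the source of the operator $d^4/dx_1^2 dx_2^2$ in \eqref{c2}. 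Applying the two commuting differential operators $Q(-\tfrac{1}{\mathcal L}d/d\alpha)$ and $Q(-\tfrac{1}{\mathcal L}d/d\beta)$ at $\alpha=\beta=0$ then converts the logarithms of $y_2$, $T$ and $mn$ into products with $e^{R(\cdots)}$, and a linear change of variables scaled by $\vartheta_2$ brings the main term into the fourfold integral over $[0,1]^4$ displayed in \eqref{c2}.

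The main obstacle is the third step: producing a twisted shifted second moment simultaneously uniform in $(\alpha,\beta)$ of size $O(1/\mathcal L)$, uniform in $(h,k)$ with $hk\leq T^{1-\varepsilon}$, and of pointwise error small enough that, after summation over the $\asymp T^{2\vartheta_2}$ quadruples $(m_1,n_1,m_2,n_2)$, the total error remains $O_\varepsilon(T\mathcal L^{-1+\varepsilon})$. Controlling the ``swap'' contribution as $hk$ approaches $T^{1-\varepsilon}$ is where the condition $\vartheta_2<1/2$ is essential, and is the reason that $\psi_2$ cannot be pushed further in this framework. The remaining bookkeeping --- handling $\mu_2$, aligning the three vanishing conditions on $P_2$ with the triple poles, and organising the fourfold differentiation --- is lengthy but routine once that uniform asymptotic is in hand.
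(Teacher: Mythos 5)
This statement is not proved in the paper at all: it is quoted verbatim (in smoothed form) from Bui--Conrey--Young [\textbf{\ref{BCY}}], and the paper's ``proof'' is simply that citation. Your sketch follows the same strategy that [\textbf{\ref{BCY}}] uses and that this paper replicates for the new terms $I_{3}$, $I_{23}$, $I_{31}$ --- open the square, apply the shifted twisted second moment of Balasubramanian--Conrey--Heath-Brown (whence the condition $\vartheta_2<1/2$ via $hk\leq y_2^2$), recover $V$ through the operator $Q(-\mathcal{L}^{-1}d/d\alpha)Q(-\mathcal{L}^{-1}d/d\beta)$, and extract the main term by Mellin representation, Euler products and residues, with the vanishing conditions on $P_2$ matching the order of vanishing of the $1/\zeta^2$ factors --- so it is the correct route, with the substantial residue bookkeeping deferred to [\textbf{\ref{BCY}}] exactly as the paper itself does.
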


We are left to evaluate $I_3$, $I_{23}$ and $I_{31}$, which are
\begin{eqnarray*}
&&I_3=\int_{-\infty}^{\infty}  \big|V (\sigma_0 + it)\zeta(\tfrac{1}{2}+it)\big|^2\sum_{m\leq y_3} \frac{\mu_2(m)P_3\big(\frac{\log y_3/m}{\log y_3}\big)}{m^{1/2+it}}\sum_{l\leq y_3} \frac{\mu_2(l)P_3\big(\frac{\log y_3/l}{\log y_3}\big)}{l^{1/2-it}}w(t) dt,\\
&&I_{23}=\int_{-\infty}^{\infty}  \big|V (\sigma_0 + it)\big|^2\zeta(\tfrac{1}{2}+it)\sum_{mn\leq y_2} \frac{\mu_2(m)P_2\big(\frac{\log y_2/mn}{\log y_2}\big)}{m^{1/2+it}n^{1/2-it}}\sum_{l\leq y_3} \frac{\mu_2(l)P_3\big(\frac{\log y_3/l}{\log y_3}\big)}{l^{1/2-it}}w(t) dt
\end{eqnarray*}
and
\[
I_{31}=\int_{-\infty}^{\infty}  \big|V (\sigma_0 + it)\big|^2\zeta(\tfrac{1}{2}+it)\sum_{m\leq y_1} \frac{\mu(m)P_1\big(\frac{\log y_1/m}{\log y_1}\big)}{m^{1/2-it}}\sum_{l\leq y_3} \frac{\mu_2(l)P_3\big(\frac{\log y_3/l}{\log y_3}\big)}{l^{1/2+it}}w(t) dt.
\]

\begin{theorem}\label{thc3}
Suppose $\vartheta_3<1/4$ and $P_3(0)=P_3'(0)=P_3''(0)=P_3'''(0)=0$. Then we have
\begin{equation*}
I_{3} =c_{3}\widehat{w}(0) + O_\varepsilon(T\mathcal{L}^{-1+\varepsilon}),
\end{equation*}
where
\begin{eqnarray}\label{c3}
&&\!\!\!\!\!\!\!\!\!\!c_{3}=\tfrac{1}{12\vartheta_3^4}\frac{d^8}{dx_1^2dx_2^2dx_3^2dx_4^2}\bigg[\mathop{\int}_{[0,1]^5}e^{-\vartheta_3R(x_2+x_3)+Rt_1(1-t_4)(1+\vartheta_3(x_1+x_3))+Rt_2(1-t_3)(1+\vartheta_3(x_2+x_4))}\nonumber\\
&&\!\!\!\!\!\!\!\!\!\!\quad e^{Rt_3\big(t_1+\vartheta_3(-x_1+x_2+(x_1+x_3)t_1)\big)+Rt_4\big(t_2+\vartheta_3(x_3-x_4+(x_2+x_4)t_2)\big)}\nonumber\\
&&\!\!\!\!\!\!\!\!\!\!\quad\quad\Big(t_1-t_2+\vartheta_3\big(-x_1+x_2+(x_1+x_3)t_1-(x_2+x_4)t_2\big)\Big)\nonumber\\
&&\!\!\!\!\!\!\!\!\!\!\quad\quad\quad\Big(t_1-t_2+\vartheta_3\big(-x_3+x_4+(x_1+x_3)t_1-(x_2+x_4)t_2\big)\Big)\nonumber\\
&&\!\!\!\!\!\!\!\!\!\!\quad\quad\quad\quad\big(1+\vartheta_3(x_1+x_3)\big)\big(1+\vartheta_3(x_2+x_4)\big)(1-u)^3\nonumber\\
&&\!\!\!\!\!\!\!\!\!\!\quad\quad\quad\quad\quad Q\bigg(-\vartheta_3 x_2+t_2(1-t_3)\big(1+\vartheta_3(x_2+x_4)\big)+t_3\Big(t_1+\vartheta_3\big(-x_1+x_2+(x_1+x_3)t_1\big)\Big)\bigg)\nonumber\\
&&\!\!\!\!\!\!\!\!\!\!\quad\quad\quad\quad\quad\quad Q\bigg(-\vartheta_3 x_3+t_1(1-t_4)\big(1+\vartheta_3(x_1+x_3)\big)+t_4\Big(t_2+\vartheta_3\big(x_3-x_4+(x_2+x_4)t_2\big)\Big)\bigg)\nonumber\\
&&\!\!\!\!\!\!\!\!\!\!\quad\quad\quad\quad\quad\quad\quad P_3(x_1+x_2+u)P_3(x_3+x_4+u)dt_1dt_2dt_3dt_4du\bigg]_{\underline{x}=0}.
\end{eqnarray}
\end{theorem}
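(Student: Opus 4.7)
The strategy is to treat $I_3$ as, essentially, a Dirichlet-twisted fourth moment of the Riemann zeta-function and invoke the asymptotic of Bettin \textit{et al.}\ [\textbf{\ref{BBLR}}]. Writing $\psi_3(\sigma_0+it)=\zeta(\tfrac12+it)B(\sigma_0+it)$ with $B(s)=\sum_{n\le y_3}\mu_2(n)P_3\big(\tfrac{\log y_3/n}{\log y_3}\big)n^{\sigma_0-1/2}/n^s$, we see that $|\psi_3|^2=|\zeta(\tfrac12+it)|^2|B(\sigma_0+it)|^2$, so the integrand of $I_3$ contains four $\zeta$-factors (two on and two just off the critical line) and a short Dirichlet-polynomial twist of length $y_3^{2}$. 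Following the standard Levinson setup I would first replace $|V|^{2}$ by a pair of small shifts and study
\[
J_3(\alpha,\beta,\gamma,\delta)=\int\zeta(\tfrac12+\alpha+it)\zeta(\tfrac12+\beta-it)\zeta(\tfrac12+\gamma+it)\zeta(\tfrac12+\delta-it)|B(\sigma_0+it)|^{2}w(t)\,dt,
\]
recovering $I_3$ by an application of a Cauchy-type integral in $\alpha,\beta$ that represents $Q(-\tfrac{1}{\mathcal{L}}\partial_\alpha)Q(-\tfrac{1}{\mathcal{L}}\partial_\beta)$ at $(\gamma,\delta)=(0,0)$.

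Opening $|B|^{2}$ as a double Dirichlet sum over $m,l\le y_3$ reduces each inner integral to a shifted twisted fourth moment
\[
\int\zeta(\tfrac12+\alpha+it)\zeta(\tfrac12+\beta-it)\zeta(\tfrac12+\gamma+it)\zeta(\tfrac12+\delta-it)(l/m)^{it}w(t)\,dt,
\]
to which I apply [\textbf{\ref{BBLR}}]. Their uniformity in the twist translates to $ml\le y_3^{2}\le T^{1/2-\varepsilon}$, which is precisely the hypothesis $\vartheta_3<1/4$. The resulting main term is the standard sum of six swap-recipe pieces in the four shifts, holomorphic at the origin once the permutations are combined. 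I then replace each $P_3\big(\tfrac{\log y_3/n}{\log y_3}\big)$ by its Mellin transform, turning the double sum into a double contour integral in auxiliary variables $u_1,u_2$; the arithmetic Euler products in each swap-piece combine with $\sum_n\mu_2(n)n^{-s}=\zeta(s)^{-2}$ to cancel the would-be poles at $u_j=0$, leaving an integrand holomorphic near the origin. Rescaling the shift and Mellin variables by $1/\mathcal{L}$ and letting $T\to\infty$ reduces $I_3$ to a finite-dimensional integral, independent of $T$.

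A final change of variables places that integral over $[0,1]^{5}$ with integration variables $t_1,t_2,t_3,t_4,u$, and recasts the outer Cauchy representations of $Q$ and of the two $P_3$'s as the derivative $\frac{d^{8}}{dx_1^{2}dx_2^{2}dx_3^{2}dx_4^{2}}$ at $\underline{x}=0$; this yields exactly the formula \eqref{c3} for $c_3$. The prefactor $1/(12\vartheta_3^{4})$ records the Jacobians from rescaling the four Mellin variables by $1/(\vartheta_3\mathcal{L})$, and the weight $(1-u)^{3}$ encodes the four-fold vanishing of $P_3$ at $0$. The principal obstacle, beyond [\textbf{\ref{BBLR}}] itself, is the holomorphy/cancellation step: one must check that the six swap-terms, together with the Euler products arising from the BBLR main term and with the factors $\zeta(s)^{-2}$ from the Mellin representation of $B$, combine to an integrand regular at the origin in all auxiliary variables. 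This is the precise fourth-moment analogue of the cancellation carried out by Bui--Conrey--Young in the proof of Theorem \ref{thm:mainterm2}; once it is established, the remaining manipulation is a routine---if lengthy---sequence of changes of variables, and the error $O_\varepsilon(T\mathcal{L}^{-1+\varepsilon})$ follows from the error term in [\textbf{\ref{BBLR}}] summed trivially against $|B|^{2}$.
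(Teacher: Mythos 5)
Your proposal follows essentially the same route as the paper: introduce four shifts, open the Dirichlet polynomial and invoke the BBLR twisted fourth moment (whence $\vartheta_3<1/4$), Mellin-expand the $P_3$'s so that the $\zeta^{-2}$ factors generated by $\mu_2$ tame the poles of the swap terms, combine the six pieces by partial fractions into an expression holomorphic at the origin, and convert everything to an integral over $[0,1]^5$, finally recovering $I_3$ from the shifted version via the $Q(-\mathcal{L}^{-1}d/d\beta)$ operators and Cauchy's formula. The only slips are cosmetic: the weight $(1-u)^3$ actually arises from the $d_4(n)$ sum produced by $\zeta(1+u+v)^4$ via Euler--Maclaurin (the fourfold vanishing of $P_3$ at $0$ is instead what makes the residue and contour error terms admissible), and the prefactor $1/(12\vartheta_3^4)$ also absorbs a symmetrization factor $1/2$ beyond the Mellin rescalings.
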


\begin{theorem}\label{thc23}
Suppose $\vartheta_2<1/2$, $\vartheta_3<1/4$ and $P_2(0)=P_2'(0)=P_2''(0)=0=P_3(0)=P_3'(0)=P_3''(0)=P_3'''(0)$. Then we have
\begin{equation*}
I_{23} =c_{23}\widehat{w}(0) + O_\varepsilon(T\mathcal{L}^{-1+\varepsilon}),
\end{equation*}
where
\begin{eqnarray}\label{c23}
&&\!\!\!\!\!\!\!\!\!\!c_{23}=\tfrac{2}{3\vartheta_{2}^2}\frac{d^6}{dx_1^2dx_2^2dx_3^2}\bigg[\mathop{\int}_{[0,1]^5}e^{-R(\vartheta_2 x_1+\vartheta_3 x_2)+Rt_1t_2(1-t_3-t_3t_4)(\vartheta_2(1+x_1)-\vartheta_3(1-u))}\nonumber\\
&&\!\!\!\!\!\!\!\!\!\!\ \quad e^{Rt_3(1+t_4)(1+\vartheta_2 x_1+\vartheta_3 x_2)+R(1-t_4)\big(\vartheta_3 (x_2-x_3)-t_1(2t_2-1)(\vartheta_2(1+x_1)-\vartheta_3(1-u))\big)}\nonumber\\
&&\!\!\!\!\!\!\!\!\!\!\ \ \quad\quad\bigg(-\vartheta_3\big(x_2-x_3)+t_1(2t_2-1)\big(\vartheta_2(1+x_1)-\vartheta_3(1-u)\big)\nonumber\\
&&\qquad\qquad\qquad\qquad\qquad+t_3\Big(1+\vartheta_2 x_1+\vartheta_3 x_2-t_1t_2\big(\vartheta_2(1+x_1)-\vartheta_3(1-u)\big)\Big)\bigg)\nonumber\\
&&\!\!\!\!\!\!\!\!\!\!\ \ \ \quad\quad\quad\Big(1+\vartheta_2 x_1+\vartheta_3 x_2-t_1t_2\big(\vartheta_2(1+x_1)-\vartheta_3(1-u)\big)\Big)t_1\Big(x_1+1-\tfrac{\vartheta_3}{\vartheta_2}(1-u)\Big)^2(1-u)^3\nonumber\\
&&\!\!\!\!\!\!\!\!\!\!\quad\ \ \ \ \quad\quad\quad Q\bigg(-\vartheta_3 x_2+t_1t_2(1-t_3t_4)\big(\vartheta_2(1+x_1)-\vartheta_3(1-u)\big)+t_3t_4(1+\vartheta_2 x_1+\vartheta_3 x_2)\nonumber\\
&&\qquad\qquad\qquad\qquad\qquad+(1-t_4)\Big(\vartheta_3 (x_2-x_3)-t_1(2t_2-1)\big(\vartheta_2(1+x_1)-\vartheta_3(1-u)\big)\Big)\bigg)\nonumber\\
&&\!\!\!\!\!\!\!\!\!\!\quad\quad\ \ \ \ \quad\quad\quad Q\bigg(-\vartheta_2x_1+t_3\Big(1+\vartheta_2 x_1+\vartheta_3 x_2-t_1t_2\big(\vartheta_2(1+x_1)-\vartheta_3(1-u)\big)\Big)\bigg)\nonumber\\
&&\!\!\!\!\!\!\!\!\!\!\quad\quad\quad\ \ \ \ \ \quad\quad\quad P_2''\Big(\big(x_1+1-\tfrac{\vartheta_3}{\vartheta_2}(1-u)\big)(1-t_1)\Big)P_3\big(x_2+x_3+u\big)dt_1dt_2dt_3dt_4du\bigg]_{\underline{x}=0}.
\end{eqnarray}
\end{theorem}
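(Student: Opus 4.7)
The plan is to follow the template used in [\textbf{\ref{BCY}}] for the computation of $c_{12}$, but with the assumed twisted third moment of Theorem~5.1 taking the place of the twisted second moment. Using Cauchy's differentiation formula, I would represent $V$ at the two conjugate points by small contour integrals
\[
V(\sigma_0\pm it)=\frac{1}{2\pi i}\oint_{|\alpha|=\delta}\zeta(\sigma_0+\alpha\pm it)\,K_Q(\alpha)\,d\alpha,
\]
where $K_Q$ is a meromorphic kernel built from the coefficients of $Q$ and from $\mathcal{L}^{-1}$. This rewrites $|V(\sigma_0+it)|^2\zeta(\tfrac{1}{2}+it)$ as a double contour integral, with integrand $\zeta(\sigma_0+\alpha+it)\zeta(\sigma_0+\beta-it)\zeta(\tfrac{1}{2}+it)$, against a kernel that is holomorphic away from $\alpha=\beta=0$. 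Interchanging the $t$-integral with the contour integrals and the Dirichlet sums from $\psi_2\overline{\psi_3}$, the $t$-integral becomes precisely a twisted third moment, with twist $m^{-1/2-it}n^{-1/2+it}l^{-1/2+it}$, to which Theorem~5.1 applies and produces an explicit main term with error $O_\varepsilon(T\mathcal{L}^{-1+\varepsilon})$.

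Next I would handle the arithmetic. The polynomial factors $P_2\!\left(\frac{\log y_2/mn}{\log y_2}\right)$ and $P_3\!\left(\frac{\log y_3/l}{\log y_3}\right)$ are turned into contour integrals by means of
\[
(\log y/n)^k=\frac{k!}{2\pi i}\oint\frac{(y/n)^{s}}{s^{k+1}}\,ds,
\]
so that all the summations over $m,n,l$ can be pulled inside and the $\mu_2$-series performed. The main term delivered by Theorem~5.1 is a sum of polar pieces in $(\alpha,\beta)$ corresponding to the functional-equation symmetries permuting the three shifted zeta-values; upon combining with the $\mu_2$-sums, the arithmetic collapses to a finite product of $\zeta$ and $1/\zeta^2$ factors multiplied by an Euler product that is analytic near $\alpha=\beta=s_2=s_3=0$. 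The vanishing hypotheses $P_2(0)=P_2'(0)=P_2''(0)=0$ and $P_3(0)=\cdots=P_3'''(0)=0$ are exactly the ones needed to pull these inner contours across the residual singularities at the origin without picking up spurious boundary terms, while the hypotheses $\vartheta_2<1/2$ and $\vartheta_3<1/4$ keep the length of the twist within the admissible range so that the error term survives the double summation.

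It remains to evaluate the resulting multiple contour integral in $(\alpha,\beta,s_2,s_3)$ by residues. The standard rescaling $\alpha=\vartheta_2 x_1/\mathcal{L}$, $\beta=\vartheta_3 x_3/\mathcal{L}$ (with analogous parametrisations of the arcs/segments by bounded variables $t_1,\ldots,t_4,u\in[0,1]$) converts the rational/exponential integrand into the exponential-polynomial form displayed in \eqref{c23}, while the prefactor $\frac{d^6}{dx_1^2\,dx_2^2\,dx_3^2}$ records the remaining orders of vanishing of $P_2''$ and $P_3$ after the other vanishings have been consumed by contour shifts. The main obstacle is not Theorem~5.1 itself but the combinatorial bookkeeping: a twisted third moment produces several symmetrised main terms, and once the $\mu_2$-sums collapse and the contours are shifted one has to check carefully that every spurious residue is cancelled by the prescribed vanishings of $P_2$ and $P_3$, so that the various pieces fit together into the single compact expression \eqref{c23}.
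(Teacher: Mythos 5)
Your overall route coincides with the paper's: introduce shift parameters, apply the assumed twisted third moment, Mellinize $P_2$ and $P_3$ via \eqref{Mellin}, collapse the arithmetic to a ratio of zeta-factors times an Euler product analytic near the origin as in \eqref{755}, evaluate the resulting contour integrals by residues, and convert the remaining $n$-sum into a $u$-integral by Euler--Maclaurin. Your kernel representation of $V$ is the same device as the paper's operator identity \eqref{eq:diffop} combined with \eqref{eq:Qop}, so up to packaging the strategies agree.

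The genuine gap is that the sketch never actually produces \eqref{c23}, and the two mechanisms that generate its structure are either absent or misidentified. First, the bounded variables do not come from ``parametrising arcs'': $u$ comes from Euler--Maclaurin (Lemma \ref{600}) applied to $\sum_{n\le y_3}d_4(n)/n$, while $t_1,t_2$ come from a beta-function integral representation of $1/(i+l_1+l_2)!$ used to separate the shifts inside the residue at the origin (Lemma \ref{L1}). This is also where the vanishing hypotheses really enter: they force $i\ge3$ in Lemma \ref{L1} and $j\ge4$ in Lemma \ref{L2}, which is what makes the discarded lower-order residues and the Euler--Maclaurin errors one power of $\mathcal{L}$ smaller than the main term --- not a matter of avoiding boundary terms when shifting contours. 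Second, and more importantly, the variables $t_3,t_4$ and the specific arguments of the two $Q$'s in \eqref{c23} arise from recombining the three functional-equation main terms $I_{23}^1,I_{23}^2,I_{23}^3$: one needs the partial-fraction identity $\frac{1}{(\alpha+\gamma)(\beta+\gamma)}=\frac{1}{(\alpha+\gamma)(\beta-\alpha)}+\frac{1}{(\alpha-\beta)(\beta+\gamma)}$ together with two applications of $\frac{1-z^{-s}}{s}=(\log z)\int_0^1 z^{-st}\,dt$ (after the change $t_2\to 1-t_2$ in one term and a swap of $x_2$ with $x_3$ in another) to cancel the apparent poles in the shifts and arrive at the shifted main term \eqref{c23a}, from which \eqref{c23} follows by applying the $Q$-operators and setting $\beta=\gamma=-R/\mathcal{L}$. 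The ``combinatorial bookkeeping'' you defer at the end is precisely this step, and since the theorem's content is the explicit formula, the proof is not complete without it.
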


\begin{theorem}\label{thc31}
Suppose $\vartheta_1<4/7$, $\vartheta_3<1/4$ and $P_1(0)=0=P_3(0)=P_3'(0)$. Then we have
\begin{equation*}
I_{31} = c_{31}\widehat{w}(0) + O_\varepsilon(T\mathcal{L}^{-1+\varepsilon}),
\end{equation*}
where
\begin{eqnarray}\label{c31}
&&\!\!\!\!\!\!\!\!\!\!\!\!c_{31}=\tfrac{1}{\vartheta_{1}^{2}}\frac{d^4}{dx_1dx_2dx_3^2}\bigg[\mathop{\int}_{[0,1]^3}e^{-R(\vartheta_1 x_2+\vartheta_3 x_3)+Rt_1(1+t_2)(1+\vartheta_1 x_1+\vartheta_3 x_3)-\vartheta_1Rt_2(x_1-x_2)}\nonumber\\
&&\!\!\!\!\!\!\!\!\!\!\!\!\quad \big(-\vartheta_1(x_1-x_2)+t_1(1+\vartheta_1 x_1+\vartheta_3 x_3)\Big)(1+\vartheta_1 x_1+\vartheta_3 x_3)(1-u)\nonumber\\
&&\!\!\!\!\!\!\!\!\!\!\!\!\quad\quad Q\Big(-\vartheta_1 x_2+t_1t_2(1+\vartheta_1 x_1+\vartheta_3 x_3)-\vartheta_1t_2(x_1-x_2)\Big)Q\Big(-\vartheta_3 x_3+t_1(1+\vartheta_1 x_1+\vartheta_3 x_3)\Big)\nonumber\\
&&\!\!\!\!\!\!\!\!\!\!\!\!\quad\quad\quad\ P_1\Big(x_1+x_2+1-\tfrac{\vartheta_3}{\vartheta_1}(1-u)\Big)P_3(x_3+u)dt_1dt_2du\bigg]_{\underline{x}=0}.
\end{eqnarray}
\end{theorem}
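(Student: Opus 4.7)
The plan is to mirror the evaluations of the companion cross-terms $c_{12}$ and $c_{23}$ carried out in [\textbf{\ref{BCY}}], with the only genuinely new analytic input being the twisted third moment of Theorem~5.1 in place of the twisted second moment. First I would represent the differential operator defining $V$ by Cauchy's integral formula on small circles, so that $|V(\sigma_0+it)|^2$ becomes a double contour integral in shift variables $\alpha,\beta$ of $\zeta(\sigma_0+\alpha+it)\zeta(\sigma_0+\beta-it)$ against a kernel built from $Q$. After interchanging summations and integrations, $I_{31}$ reduces to a double contour integral over $\alpha,\beta$ of
\[
\sum_{m\leq y_1}\sum_{l\leq y_3}\frac{\mu(m)\mu_2(l)P_1\!\left(\tfrac{\log y_1/m}{\log y_1}\right)P_3\!\left(\tfrac{\log y_3/l}{\log y_3}\right)}{\sqrt{m l}}\,\mathcal{I}(\alpha,\beta;m,l),
\]
where $\mathcal{I}(\alpha,\beta;m,l)=\int\zeta(\sigma_0+\alpha+it)\zeta(\sigma_0+\beta-it)\zeta(\tfrac12+it)(m/l)^{it}w(t)\,dt$ is exactly the twisted third moment covered by Theorem~5.1, for $m\leq T^{\vartheta_1}$ and $l\leq T^{\vartheta_3}$ under the stated constraints $\vartheta_1<4/7$ and $\vartheta_3<1/4$.

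Second, I would substitute the main term supplied by Theorem~5.1 for $\mathcal{I}(\alpha,\beta;m,l)$, which will be a polynomial in $\log T$ (arising from the shift derivatives at $\alpha,\beta=0$) whose dependence on $m,l$ is through simple factors of the form $m^{-s_0}l^{-s_0'}$ with explicit shifts. To disentangle the sums over $m$ and $l$ I would replace the polynomial cutoffs by their Mellin representations, e.g.\
\[
P_1\!\left(\tfrac{\log y_1/m}{\log y_1}\right)=\frac{1}{2\pi i}\int_{(\varepsilon)}\left(\tfrac{y_1}{m}\right)^{s}\widetilde{P}_1(s)\,\frac{ds}{s}
\]
for an entire function $\widetilde{P}_1$ attached to $P_1$, and analogously for $P_3$. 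The inner Dirichlet sums over $m$ and $l$ then collapse to Euler-product factors whose dominant behaviour near $s=0$ is $1/\zeta(1+\cdots)$ and $1/\zeta^2(1+\cdots)$ respectively, as in [\textbf{\ref{C}}] and [\textbf{\ref{BCY}}].

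Third, I would shift all contours past the poles at the edges and collect residues. The vanishing conditions $P_1(0)=0$ and $P_3(0)=P_3'(0)=0$ enter precisely here: the first suppresses the spurious contribution attached to $1/\zeta$, while the double vanishing matches the order of the pole of $1/\zeta^2$ at the origin. Rescaling the shift and Mellin variables by $\mathcal{L}=\log T$ and performing the standard substitutions $s\mapsto\vartheta_1 x$, $s\mapsto\vartheta_3 x$, and $\alpha,\beta\mapsto R\cdot(\text{variables in }[0,1])$ should convert the iterated residue expression into the explicit $[0,1]^3$ integral displayed in \eqref{c31}, with the prefactor $\tfrac{1}{\vartheta_1^2}\tfrac{d^4}{dx_1 dx_2 dx_3^2}$ arising from the Cauchy representation of $V$ together with the Mellin variable attached to $P_3$. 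The error $O_\varepsilon(T\mathcal{L}^{-1+\varepsilon})$ is inherited directly from the error term of Theorem~5.1 after integration against the $O(1)$-length contours.

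The main obstacle I anticipate is twofold. On the one hand, the bookkeeping in the third step---the change of variables that converts the iterated residue expression into the precise $[0,1]^3$ integral with the exact weights shown in \eqref{c31}---is the most delicate and error-prone part of calculations of this type. On the other hand, one needs to apply Theorem~5.1 with uniform control in the shifts $\alpha,\beta$ on circles of radius $O(1/\mathcal{L})$ around $0$, with an error of quality $O_\varepsilon(T\mathcal{L}^{-1+\varepsilon})$; this uniformity is essential for Step~1 but is not automatic from a pointwise statement, and confirming that the (assumed) proof of Theorem~5.1 delivers it will require some care.
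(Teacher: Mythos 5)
Your proposal follows essentially the same route as the paper: introduce shift parameters so that the $Q$-differential operators defining $V$ can be recovered by Cauchy's integral formula on circles of radius $\asymp\mathcal{L}^{-1}$ (which is why the uniformity in Theorem 5.1 is needed, exactly as you flag), apply the assumed twisted third moment, Mellin-represent the mollifier coefficients $P_1$ and $P_3$, reduce the arithmetic sums to quotients of zeta values near $1$ via the Euler product, and convert the resulting residues into the $[0,1]^3$ integral by the standard changes of variables. The only cosmetic difference is that the paper first proves the fully shifted Lemma 3.3 for $I_{31}(\alpha,\beta,\gamma)$ and then applies the $Q$-operators to its main term, rather than interleaving the two steps, and the vanishing conditions on $P_1,P_3$ are used to control error terms in the residue computations rather than to kill spurious residues; neither point changes the substance of the argument.
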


\subsection{The shift parameters}

Rather than working directly with $V(s)$, we shall instead consider the following three general integrals
\begin{eqnarray}\label{I3}
I_3(\alpha,\beta,\gamma,\delta)&=&\int_{-\infty}^{\infty}  \zeta(\tfrac{1}{2}+\alpha+it) \zeta(\tfrac{1}{2}+\beta+it) \zeta(\tfrac{1}{2}+\gamma-it)\zeta(\tfrac{1}{2}+\delta-it)\nonumber\\
&&\qquad\qquad \sum_{m\leq y_3} \frac{\mu_2(m)P_3\big(\frac{\log y_3/m}{\log y_3}\big)}{m^{1/2+it}}\sum_{l\leq y_3} \frac{\mu_2(l)P_3\big(\frac{\log y_3/l}{\log y_3}\big)}{l^{1/2-it}}w(t) dt,
\end{eqnarray}
\begin{eqnarray}\label{I23}
I_{23}(\alpha,\beta,\gamma)&=&\int_{-\infty}^{\infty}  \zeta(\tfrac{1}{2}+\alpha+it) \zeta(\tfrac{1}{2}+\beta+it) \zeta(\tfrac{1}{2}+\gamma-it)\nonumber\\
&&\qquad\qquad\sum_{mn\leq y_2} \frac{\mu_2(m)P_2\big(\frac{\log y_2/mn}{\log y_2}\big)}{m^{1/2+it}n^{1/2-it}}\sum_{l\leq y_3} \frac{\mu_2(l)P_3\big(\frac{\log y_3/l}{\log y_3}\big)}{l^{1/2-it}}w(t) dt
\end{eqnarray}
and
\begin{eqnarray}\label{I31}
I_{31}(\alpha,\beta,\gamma)&=&\int_{-\infty}^{\infty}  \zeta(\tfrac{1}{2}+\alpha+it) \zeta(\tfrac{1}{2}+\beta+it) \zeta(\tfrac{1}{2}+\gamma-it)\nonumber\\
&&\qquad\qquad\sum_{m\leq y_1} \frac{\mu(m)P_1\big(\frac{\log y_1/m}{\log y_1}\big)}{m^{1/2-it}}\sum_{l\leq y_3} \frac{\mu_2(l)P_3\big(\frac{\log y_3/l}{\log y_3}\big)}{l^{1/2+it}}w(t) dt.
\end{eqnarray}

Our main goal in the rest of the paper is to prove the following lemmas.

\begin{lemma}\label{lemmac3}
Suppose $\vartheta_3<1/4$ and $P_3(0)=P_3'(0)=P_3''(0)=P_3'''(0)=0$. Then we have
\begin{equation*}
I_{3}(\alpha,\beta,\gamma,\delta) =c_{3}(\alpha,\beta,\gamma,\delta)\widehat{w}(0) + O_\varepsilon(T\mathcal{L}^{-1+\varepsilon}),
\end{equation*}
uniformly for $\alpha, \beta,\gamma,\delta \ll \mathcal{L}^{-1}$, where $c_{3}(\alpha,\beta,\gamma,\delta)$ is given by
\begin{eqnarray}\label{c3a}
&&\tfrac{1}{12\vartheta_3^4}\frac{d^8}{dx_1^2dx_2^2dx_3^2dx_4^2}\bigg[\mathop{\int}_{[0,1]^5}y_3^{\alpha x_1+\beta x_2+\gamma x_3+\delta x_4}(Ty_3^{x_1+x_3})^{-(\alpha+\gamma)t_1}(Ty_3^{x_2+x_4})^{-(\beta+\delta)t_2}\nonumber\\
&&\quad\Big(T^{t_1-t_2}y_3^{-x_1+x_2+(x_1+x_3)t_1-(x_2+x_4)t_2}\Big)^{-(\beta-\alpha)t_3}\Big(T^{t_1-t_2}y_3^{-x_3+x_4+(x_1+x_3)t_1-(x_2+x_4)t_2}\Big)^{-(\delta-\gamma)t_4}\nonumber\\
&&\quad\quad\Big(t_1-t_2+\vartheta_3\big(-x_1+x_2+(x_1+x_3)t_1-(x_2+x_4)t_2\big)\Big)\\
&&\quad\quad\quad\Big(t_1-t_2+\vartheta_3\big(-x_3+x_4+(x_1+x_3)t_1-(x_2+x_4)t_2\big)\Big)\big(1+\vartheta_3(x_1+x_3)\big)\nonumber\\
&&\quad\quad\quad\quad\big(1+\vartheta_3(x_2+x_4)\big)(1-u)^3P_3(x_1+x_2+u)P_3(x_3+x_4+u)dt_1dt_2dt_3dt_4du\bigg]_{\underline{x}=0}.\nonumber
\end{eqnarray}
\end{lemma}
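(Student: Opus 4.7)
The plan is to follow the Levinson--Conrey template, now adapted to the twisted fourth moment of~$\zeta$. First, I would open the two Dirichlet polynomials in \eqref{I3} and interchange summation with the $t$-integral. This reduces matters to summing over $m,l\leq y_3$, with weights $\mu_2(m)\mu_2(l) P_3(\tfrac{\log y_3/m}{\log y_3}) P_3(\tfrac{\log y_3/l}{\log y_3}) (ml)^{-1/2}$, the shifted twisted fourth moment
\begin{equation*}
J(\alpha,\beta,\gamma,\delta; m, l) := \int \zeta(\tfrac12+\alpha+it)\zeta(\tfrac12+\beta+it)\zeta(\tfrac12+\gamma-it)\zeta(\tfrac12+\delta-it)(m/l)^{-it}\,w(t)\,dt.
\end{equation*}
Since $\vartheta_3<1/4$ gives $ml\leq T^{2\vartheta_3}<T^{1/2-\varepsilon}$, the shifted version of the Bettin \emph{et al.}\ formula should apply and yield $J = \widehat w(0)\,\mathcal M(\alpha,\beta,\gamma,\delta;m,l) + O_\varepsilon(T\mathcal L^{-A})$, where the main term $\mathcal M$ is the CFKRS recipe: a sum of six ``swap'' contributions obtained by exchanging subsets of the $+it$-shifts $\{\alpha,\beta\}$ with equal-size subsets of the $-it$-shifts $\{\gamma,\delta\}$, each producing an explicit analytic expression in the shifts and twist $m/l$ whose diagonal poles are compensated by companion zeta factors.

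Second, I would represent each polynomial weight via a Mellin--Barnes contour,
\begin{equation*}
P_3\!\Big(\tfrac{\log y_3/n}{\log y_3}\Big) = \frac{1}{2\pi i}\oint_{|z|=r} y_3^{z}\, n^{-z}\,\widetilde P_3(z)\,dz,
\end{equation*}
where $\widetilde P_3$ has a pole of bounded order at $z=0$ reduced by the hypotheses $P_3(0)=P_3'(0)=P_3''(0)=P_3'''(0)=0$. The sums over $m$ and $l$ then collapse through $\sum_n\mu_2(n)n^{-s}=\zeta(s)^{-2}$ to factors of $\zeta(1+\cdot)^{-2}$ times analytic remainders, the arguments being linear combinations of the shifts $\alpha,\beta,\gamma,\delta$ and of four new Mellin variables $z_1,\ldots,z_4$. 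I would then shift each $z_i$-contour slightly past $z_i=0$, picking up a residue at the origin: the double zero of $\zeta^{-2}(1+\cdot)$ there is cancelled by the companion double pole of the relevant CFKRS swap denominator, and what survives is a residue of the precise order needed to match the derivative $\partial^2/\partial x_i^2\vert_{x_i=0}$ in \eqref{c3a}. Assembling the four residues produces the total operator $\partial^8/(\partial x_1^2\partial x_2^2 \partial x_3^2 \partial x_4^2)$.

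Finally, the remaining shift-dependence is repackaged using standard identities of the form $\tfrac{1-e^{-Rs}}{Rs}=\int_0^1 e^{-Rst}\,dt$, iterated four times over the swap data, together with one further integration to absorb the joint contribution of $P_3(\cdot)P_3(\cdot)$; this introduces the five auxiliary variables $t_1,t_2,t_3,t_4,u\in[0,1]$. Keeping track of the $\vartheta_3$ factors coming from $\log y_3 = \vartheta_3\mathcal L$ and from the Jacobians then identifies the resulting integrand as that of \eqref{c3a}: the $y_3^{\alpha x_1+\beta x_2+\gamma x_3+\delta x_4}$ and $(Ty_3^{\cdots})^{\cdots t_j}$ prefactors arise from the interaction between the Mellin shifts $y_3^{z_i}$ and the $T^{\text{swap}}$ powers in $\mathcal M$. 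Uniformity in $\alpha,\beta,\gamma,\delta\ll\mathcal L^{-1}$ follows from the usual convexity bounds for $\zeta$ on vertical lines together with the rapid decay of $\widehat w$, as in [\textbf{\ref{BCY}}] and [\textbf{\ref{C}}]. The hard part will be the combinatorial bookkeeping of the six swap terms: each has a different structure of diagonal singularities in the shift variables, and it is not transparent \emph{a priori} that the double zeros of $\zeta^{-2}$ and the vanishing conditions on $P_3$ conspire to cancel every such singularity, leaving the single clean integrand of \eqref{c3a}. Verifying this cancellation and reorganising the resulting multi-variable contour integrals into the parameterised form of \eqref{c3a} is the crux of the argument.
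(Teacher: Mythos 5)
Your outline follows the paper's proof essentially step for step: apply the shifted twisted fourth moment of Bettin \emph{et al.} (admissible since $y_3=T^{\vartheta_3}$ with $\vartheta_3<1/4$), represent the $P_3$-weights by Mellin integrals, factor the constrained arithmetical sum through Euler products, compute residues at the origin to produce the $\partial^2/\partial x_i^2$ operators, and use $\tfrac{1-z^{-s}}{s}=(\log z)\int_0^1 z^{-st}\,dt$ together with an Euler--Maclaurin evaluation of the resulting $d_4$-sum (coming from the $\zeta(1+u+v)^4$ numerator factor) to introduce $t_1,\dots,t_4$ and $u$. The one place your description of the mechanism is off: the shift singularities $(\alpha+\gamma)^{-1}$, etc., are not removed by an interplay between the zeros of $\zeta^{-2}$ and the vanishing of $P_3$ at $0$ (the latter only forces $j\geq4$ so that the residue error terms are admissible); in the paper they cancel among the six swap terms themselves, via a partial-fraction identity and the freedom to permute $x_1\leftrightarrow x_2$, $x_3\leftrightarrow x_4$, $t_1\leftrightarrow t_2$ under the symmetric derivative operator, after which \eqref{int} is applied twice more --- this is exactly the ``crux'' you defer, and it is an algebraic rearrangement rather than an analytic cancellation.
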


\begin{remark}
\emph{In the special case $\alpha=\beta=\gamma=\delta=0$, Lemma \ref{lemmac3} agrees with the mollified fourth moment of the Riemann zeta-function predicted by Conrey and Snaith using the ratios conjecture [\textbf{\ref{CS}}; Theorem 6.1].}
\end{remark}

\begin{lemma}\label{lemmac23}
Suppose $\vartheta_2<1/2$, $\vartheta_3<1/4$ and $P_2(0)=P_2'(0)=P_2''(0)=0=P_3(0)=P_3'(0)=P_3''(0)=P_3'''(0)$. Then we have
\begin{equation*}
I_{23}(\alpha,\beta,\gamma) =c_{23}(\alpha,\beta,\gamma)\widehat{w}(0) + O_\varepsilon(T\mathcal{L}^{-1+\varepsilon}),
\end{equation*}
uniformly for $\alpha, \beta,\gamma \ll \mathcal{L}^{-1}$, where $c_{23}(\alpha,\beta,\gamma)$ is given by
\begin{eqnarray}\label{c23a}
&&\tfrac{2}{3\vartheta_{2}^2}\frac{d^6}{dx_1^2dx_2^2dx_3^2}\bigg[\mathop{\int}_{[0,1]^5}y_{2}^{\gamma x_1-\big(\alpha t_2+\beta  (1-t_2)\big)t_1(1+x_1)}y_{3}^{\alpha x_2+\beta x_3+\big(\alpha t_2+\beta  (1-t_2)\big)t_1(1-u)}\nonumber\\
&&\quad\ \big(Ty_{2}^{x_1-t_1t_2(1+x_1)}y_{3}^{x_2+t_1t_2(1-u)}\big)^{-(\alpha+\gamma)t_3}\nonumber\\
&&\quad\ \ \quad\Big(y_{2}^{t_1(2t_2-1)(1+x_1)}y_{3}^{-x_2+ x_3-t_1(2 t_2-1)(1-u)}\big(Ty_{2}^{x_1-t_1t_2(1+x_1)}y_{3}^{x_2+t_1t_2(1-u)}\big)^{t_3}\Big)^{-(\beta-\alpha)t_4}\nonumber\\
&&\quad\ \ \ \quad\quad\bigg(-\vartheta_3\big(x_2-x_3)+t_1(2t_2-1)\big(\vartheta_2(1+x_1)-\vartheta_3(1-u)\big)\\
&&\qquad\qquad\qquad\qquad\qquad\qquad+t_3\Big(1+\vartheta_2 x_1+\vartheta_3 x_2-t_1t_2\big(\vartheta_2(1+x_1)-\vartheta_3(1-u)\big)\Big)\bigg)\nonumber\\
&&\quad\quad\ \ \ \ \quad\quad\Big(1+\vartheta_2 x_1+\vartheta_3 x_2-t_1t_2\big(\vartheta_2(1+x_1)-\vartheta_3(1-u)\big)\Big)t_1\Big(x_1+1-\tfrac{\vartheta_3}{\vartheta_2}(1-u)\Big)^2\nonumber\\
&&\quad\quad\quad\ \ \ \ \ \quad\quad (1-u)^3P_2''\Big(\big(x_1+1-\tfrac{\vartheta_3}{\vartheta_2}(1-u)\big)(1-t_1)\Big)P_3\big(x_2+x_3+u\big)dt_1dt_2dt_3du\bigg]_{\underline{x}=0}.\nonumber
\end{eqnarray}
\end{lemma}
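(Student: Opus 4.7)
The plan is to mimic the evaluation of $c_{12}$ in [\textbf{\ref{BCY}}], with the twisted third moment (Theorem 5.1) replacing the twisted second moment. Since $\psi_3$ carries an explicit $\zeta(\tfrac{1}{2}+it)$ factor, $I_{23}$ is genuinely a third-moment calculation in which the two Dirichlet polynomials from $\psi_2$ and $\psi_3$ together twist the inner integral by $(nl/m)^{-it}$, and this twist has length $mnl \leq y_2 y_3 = T^{\vartheta_2+\vartheta_3} < T^{3/4}$, which we assume lies in the admissible range of Theorem 5.1.

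First I would express the divisor polynomials by Mellin inversion, writing
\[
P_2\Big(\tfrac{\log y_2/mn}{\log y_2}\Big) = \frac{1}{2\pi i}\oint \Big(\frac{y_2}{mn}\Big)^{s_1}\widetilde{P}_2(s_1)\frac{ds_1}{s_1^{D_2+1}}
\]
on a small circle around $s_1=0$, and analogously for $P_3$ in a variable $s_2$, so that $I_{23}(\alpha,\beta,\gamma)$ becomes a double contour integral of a weighted twisted third moment. I would then apply Theorem 5.1 to the resulting inner integral: for fixed $m,n,l$ its main term is $\widehat{w}(0)\,\mathcal{M}(\alpha,\beta,\gamma;m,nl)$, a sum of swap-terms (from the CFKRS recipe pairing $\{\alpha,\beta\}$ against $\{\gamma\}$) each of the form $T^{-A}\mathcal{G}(\alpha,\beta,\gamma;m,nl)$, with $\mathcal{G}$ an Euler product in $m$ and $nl$ and $A$ a linear combination of the shifts.

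Next, I would carry out the arithmetic. Using $\sum_n \mu_2(n)n^{-s}=\zeta(s)^{-2}$ and multiplicativity, the triple sum over $m,n,l$ (weighted by $\mu_2(m)\mu_2(l)$ and the Mellin factors $m^{-s_1}n^{-s_1}l^{-s_2}$) factors as an Euler product; after cancellation against the zetas appearing inside $\mathcal{G}$, this product reduces to a ratio of zeta values at $1+$ linear combinations of $s_1,s_2,\alpha,\beta,\gamma$ times an analytic correction $\mathcal{A}(s_1,s_2)$ bounded near the origin. I would then push the $s_1,s_2$ contours to slightly negative real parts and collect residues: the dominant contributions come from $s_1=s_2=0$ convolved with the poles of the surviving $\zeta(1+\cdots)$ factors, and the vanishing conditions $P_2(0)=P_2'(0)=P_2''(0)=0$ and $P_3(0)=\cdots=P_3'''(0)=0$ suppress spurious lower-order terms so that only the derivative pattern visible in \eqref{c23a} survives. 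Finally, rescaling $s_1 \leftrightarrow \vartheta_2 R x$ and $s_2 \leftrightarrow \vartheta_3 R x$ type substitutions, and introducing the auxiliary integration variables $t_1,\ldots,t_4,u$ to encode the residue data as in the BCY derivation of $c_{12}$, converts the contour integrals into the polytope integral in \eqref{c23a}; the asymmetric factor $1-\tfrac{\vartheta_3}{\vartheta_2}(1-u)$ then appears naturally from the ratio $y_3/y_2$ in the Mellin representation.

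The main obstacle will be organizing the several swap-terms from Theorem 5.1 into the single closed-form expression \eqref{c23a}: each swap contributes its own contour integral with a distinct pole structure, and one must verify that their sum telescopes into the single derivative expression of \eqref{c23a}. A subordinate but nontrivial point is maintaining uniformity in the shifts $\alpha,\beta,\gamma \ll \mathcal{L}^{-1}$, which requires bounding $\mathcal{A}$ and its derivatives uniformly in a neighborhood of the origin and tracking the error $O_\varepsilon(T\mathcal{L}^{-1+\varepsilon})$ through Mellin inversion, residue extraction, and the final change of variables.
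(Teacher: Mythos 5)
Your proposal follows essentially the same route as the paper: apply the hypothesised twisted third moment (Theorem 5.1) with the correct range check ($HK=mnl\leq y_2y_3\leq T^{3/4-\varepsilon}$, $K\leq y_2\leq T^{1/2-\varepsilon}$), represent $P_2,P_3$ by Mellin contours, factor the arithmetic sum into an Euler product whose polar zeta factors are peeled off, evaluate the separated $u$- and $v$-integrals by residues (the beta-function device supplying $t_1,t_2$), sum over $n$ by Euler--Maclaurin to produce the $u$-variable, and finally recombine the three swap terms via partial fractions and the identity $\frac{1-z^{-(\alpha+\beta)}}{\alpha+\beta}=(\log z)\int_0^1 z^{-(\alpha+\beta)t}dt$, which introduces $t_3,t_4$. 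The plan is correct and matches the paper's proof in all essentials.
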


\begin{lemma}\label{lemmac31}
Suppose $\vartheta_1<4/7$, $\vartheta_3<1/4$ and $P_1(0)=0=P_3(0)=P_3'(0)$. Then we have
\begin{equation*}
I_{31}(\alpha,\beta,\gamma) = c_{31}(\alpha,\beta,\gamma) \widehat{w}(0) + O_\varepsilon(T\mathcal{L}^{-1+\varepsilon}),
\end{equation*}
uniformly for $\alpha, \beta,\gamma \ll \mathcal{L}^{-1}$, where $c_{31}(\alpha,\beta,\gamma)$ is given by
\begin{eqnarray}\label{c31a}
&&\tfrac{1}{\vartheta_{1}^{2}}\frac{d^4}{dx_1dx_2dx_3^2}\bigg[\mathop{\int}_{[0,1]^3}y_{1}^{\alpha x_1+\beta x_2}y_{3}^{\gamma x_3}(Ty_{1}^{x_1}y_{3}^{x_3})^{-(\alpha+\gamma)t_1}\big(y_{1}^{-x_1+x_2}(Ty_{1}^{x_1}y_{3}^{x_3})^{t_1}\big)^{-(\beta-\alpha)t_2}\nonumber\\
&&\qquad\qquad \Big(-\vartheta_1(x_1-x_2)+t_1(1+\vartheta_1 x_1+\vartheta_3 x_3)\Big)(1+\vartheta_1 x_1+\vartheta_3 x_3)\\
&&\qquad\qquad\qquad\qquad (1-u)P_1\Big(x_1+x_2+1-\tfrac{\vartheta_3}{\vartheta_1}(1-u)\Big)P_3(x_3+u)dt_1dt_2du\bigg]_{\underline{x}=0}.\nonumber
\end{eqnarray}
\end{lemma}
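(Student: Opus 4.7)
The plan is to reduce $I_{31}(\alpha,\beta,\gamma)$ to an application of the shifted twisted third moment formula for $\zeta(\tfrac12+\alpha+it)\zeta(\tfrac12+\beta+it)\zeta(\tfrac12+\gamma-it)$ against a Dirichlet polynomial --- the very assumption flagged in the second Remark of the introduction, whose precise statement is deferred to Theorem~5.1. After opening the Dirichlet polynomials in $\psi_1$ and in the $\mu_2$-piece of $\overline{\psi_3}$, and noting that the $\zeta(\tfrac12+it)$ sitting inside $\psi_3$ contributes the third zeta factor (which in the shifted version \eqref{I31} becomes $\zeta(\tfrac12+\gamma-it)$ after conjugation), I would write
\begin{equation*}
I_{31}(\alpha,\beta,\gamma)=\sum_{m\le y_1}\sum_{l\le y_3}\frac{\mu(m)\mu_2(l)P_1(\cdots)P_3(\cdots)}{(ml)^{1/2}}\,K_{m,l}(\alpha,\beta,\gamma),
\end{equation*}
where $K_{m,l}(\alpha,\beta,\gamma)=\int_{-\infty}^{\infty}\zeta(\tfrac12+\alpha+it)\zeta(\tfrac12+\beta+it)\zeta(\tfrac12+\gamma-it)(m/l)^{it}w(t)\,dt$.

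The second step is to apply the assumed twisted third moment formula to $K_{m,l}$, which evaluates it with an explicit main term plus an acceptable error, uniformly for $m\le T^{\vartheta_1}$ and $l\le T^{\vartheta_3}$ under the hypotheses $\vartheta_1<4/7$ and $\vartheta_3<1/4$. Substituting back and interpreting the remaining double sum over $m,l$ as a contour integral by encoding $P_1,P_3$ and $Q$ via Mellin--Barnes representations, the arithmetic factor factorises as an Euler product over primes which, using $\sum_m\mu(m)m^{-s}=1/\zeta(s)$ and $\sum_l\mu_2(l)l^{-s}=1/\zeta(s)^2$, reduces to a ratio of zeta values with explicit poles. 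Pushing the contours past these poles and collecting the residues produces a multivariable integral. After a change of variables to the simplicial coordinates $(t_1,t_2,u)\in[0,1]^3$, paralleling the reduction of $c_{12}$ in [\textbf{\ref{BCY}}], this yields the compact form \eqref{c31a}; the Jacobian provides the weight $(1-u)$, while the polynomial factor $\bigl(-\vartheta_1(x_1-x_2)+t_1(1+\vartheta_1x_1+\vartheta_3x_3)\bigr)(1+\vartheta_1x_1+\vartheta_3x_3)$ emerges from differentiating the arithmetic factor in the auxiliary shift variables.

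The main obstacle will be the error control. Summing the twisted third moment error over $m\le y_1$ and $l\le y_3$, weighted by $|\mu(m)|\le 1$ and $|\mu_2(l)|\ll l^\varepsilon$, must yield a total saving of $\mathcal{L}^{-1+\varepsilon}$ beyond the main order $T$. The length hypotheses $\vartheta_1<4/7$ and $\vartheta_3<1/4$ are precisely what is needed for this to hold, matching the admissible range of the assumed twisted third moment formula. On the main-term side, the vanishing conditions $P_1(0)=0=P_3(0)=P_3'(0)$ ensure that, when shifting Mellin--Barnes contours past the lower-order poles of the arithmetic factor, all extraneous residues vanish, leaving only the principal residue to contribute to \eqref{c31a}. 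The remainder of the manipulation --- computing residues explicitly and collating them into the stated integral --- is parallel to the $I_{12}$ evaluation in [\textbf{\ref{BCY}}] and consists of routine bookkeeping.
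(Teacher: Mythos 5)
Your overall strategy (invoke the assumed twisted third moment, convert the resulting arithmetic sums to contour integrals via Mellin representations of $P_1,P_3$ and Euler products, then extract residues) is the same as the paper's, but there are two genuine gaps. First, you propose to apply the twisted third moment to each individual $K_{m,l}$ and then sum the errors over $m\leq y_1$, $l\leq y_3$. Theorem 5.1 does not provide this: its error term $O_\varepsilon(T^{1-\varepsilon})$ is for the \emph{bilinear} average $\sum_{h,k}a_h\overline{a_k}(hk)^{-1/2}\int(\cdots)(h/k)^{it}w(t)\,dt$ as a whole, not uniformly for a single twist. An individual asymptotic with error $O(T^{1-\varepsilon})$, summed over $\asymp y_1y_3$ pairs with weight $(ml)^{-1/2}$, would produce an error of size roughly $T^{1-\varepsilon}\sqrt{y_1y_3}$, which is useless. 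The correct move is to substitute the whole bilinear form into Theorem 5.1 at once, with $a_m=\mu(m)P_1(\cdots)$ and $a_l=\mu_2(l)P_3(\cdots)$.

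Second, and more seriously, you only account for the ``diagonal'' main term $Z_{\alpha,\beta,\gamma}(h,k)$ and attribute the structure of \eqref{c31a} to residues of the arithmetic factor, a Jacobian, and the vanishing conditions on $P_1,P_3$. This cannot produce the answer: the factors $(Ty_1^{x_1}y_3^{x_3})^{-(\alpha+\gamma)t_1}$ and $\big(y_1^{-x_1+x_2}(Ty_1^{x_1}y_3^{x_3})^{t_1}\big)^{-(\beta-\alpha)t_2}$, and indeed the very existence of the $t_1,t_2$ integrals, arise from combining the diagonal piece with the two swap terms $(t/2\pi)^{-(\alpha+\gamma)}Z_{-\gamma,\beta,-\alpha}$ and $(t/2\pi)^{-(\beta+\gamma)}Z_{\alpha,-\gamma,-\beta}$ of Theorem 5.1. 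Concretely, each piece contributes a product $\zeta(1+\alpha+\gamma)\zeta(1+\beta+\gamma)$ (or its shifted analogue) whose poles are removed only after summing all three via the partial-fraction identity $\frac{1}{(\alpha+\gamma)(\beta+\gamma)}=\frac{1}{(\alpha+\gamma)(\beta-\alpha)}+\frac{1}{(\alpha-\beta)(\beta+\gamma)}$ and the representation $\frac{1-z^{-(\alpha+\beta)}}{\alpha+\beta}=(\log z)\int_0^1 z^{-(\alpha+\beta)t}\,dt$; these integral representations are exactly where $t_1$ and $t_2$ come from. Your proposal has no mechanism for this. Two smaller points: the weight $(1-u)$ is not a Jacobian but the Euler--Maclaurin weight $(1-u)^{k-1}$ with $k=2$ coming from the divisor sum $\sum_{n\leq y_3}d(n)/n$ generated by $\zeta(1+u+v)^2$; and the hypotheses $P_3(0)=P_3'(0)=0$ are needed so that $j\geq 2$ in the residue computation for $M_j(\gamma)$ (error control), not to kill extraneous residues.
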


We now prove that Theorems \ref{thc3}--\ref{thc31} follow from Lemmas \ref{lemmac3}--\ref{lemmac31}, respectively.
Let $I_{\star}$ denote either $I_{23}$ or $I_{31}$. We first note that
\begin{equation}\label{eq:diffop1}
I_{3} =   Q\Big(\frac{-1}{\mathcal{L}} \frac{d}{d\beta}\Big) Q\Big(\frac{-1}{\mathcal{L}} \frac{d}{d\gamma}\Big) I_{3}(\alpha, \beta,\gamma,\delta) \bigg|_{\alpha=\delta=0,\ \beta=\gamma=-R/\mathcal{L}}
\end{equation}
and
\begin{equation}
\label{eq:diffop}
I_{\star} =   Q\Big(\frac{-1}{\mathcal{L}} \frac{d}{d\beta}\Big) Q\Big(\frac{-1}{\mathcal{L}} \frac{d}{d\gamma}\Big) I_{\star}(\alpha, \beta,\gamma) \bigg|_{\alpha=0,\ \beta=\gamma=-R/\mathcal{L}}.
\end{equation}
We then argue that we can obtain either $c_3$ or $c_{\star}$ by applying the above differential operator to the corresponding $c_{3}(\alpha, \beta,\gamma,\delta)$ or $c_{\star}(\alpha, \beta,\gamma)$.  Since $I_{3}(\alpha, \beta,\gamma,\delta)$, $I_{\star}(\alpha, \beta,\gamma)$, $c_{3}(\alpha, \beta,\gamma,\delta)$ and $c_{\star}(\alpha, \beta,\gamma)$ are holomorphic with respect to $\alpha, \beta,\gamma,\delta$ small, the derivatives appearing in \eqref{eq:diffop1} and \eqref{eq:diffop} can be obtained as integrals of radii $\asymp \mathcal{L}^{-1}$ around the points $\alpha=\delta=0$, $\beta=\gamma=-R/\mathcal{L}$, using Cauchy's integral formula.  Since the error terms hold uniformly on these contours, the same error terms that hold for $I_{3}(\alpha, \beta,\gamma,\delta)$ and $I_{\star}(\alpha, \beta,\gamma)$ also hold for $I_3$ and $I_{\star}$.

Next we check that applying the above differential operator to $c_{3}(\alpha, \beta,\gamma,\delta)$ and $c_{\star}(\alpha, \beta,\gamma)$ does indeed give $c_3$ and $c_{\star}$.
Notice the formula
\begin{equation}
\label{eq:Qop}
 Q\Big(\frac{-1}{\mathcal{L}} \frac{d}{d\beta}\Big)X^{-\beta}=Q\Big(\frac{\log X}{\mathcal{L}}\Big)X^{-\beta}.
\end{equation}
Using \eqref{eq:Qop} and \eqref{c31a}, we have
\begin{eqnarray*}
 &&\!\!\!\!\!\!\!\!\!\!\!\!Q\Big(\frac{-1}{\mathcal{L}} \frac{d}{d\beta}\Big) Q\Big(\frac{-1}{\mathcal{L}} \frac{d}{d\gamma}\Big) c_{31}(0,\beta,\gamma) =\tfrac{1}{\vartheta_{1}^{2}}\frac{d^4}{dx_1dx_2dx_3^2}\bigg[\mathop{\int}_{[0,1]^3}y_{1}^{\beta x_2}y_{3}^{\gamma x_3}(Ty_{1}^{x_1}y_{3}^{x_3})^{-\gamma t_1}\nonumber\\
&&\!\!\!\!\!\!\!\!\!\!\!\!\quad \big(y_{1}^{-x_1+x_2}(Ty_{1}^{x_1}y_{3}^{x_3})^{t_1}\big)^{-\beta t_2}\Big(-\vartheta_1(x_1-x_2)+t_1(1+\vartheta_1 x_1+\vartheta_3 x_3)\Big)(1+\vartheta_1 x_1+\vartheta_3 x_3)\\
&&\!\!\!\!\!\!\!\!\!\!\!\!\quad\quad(1-u) Q\Big(-\vartheta_1 x_2-\vartheta_1t_2(x_1-x_2)+t_1t_2(1+\vartheta_1 x_1+\vartheta_3 x_3)\Big)\\
&&\!\!\!\!\!\!\!\!\!\!\!\!\quad\quad\quad Q\Big(-\vartheta_3 x_3+t_1(1+\vartheta_1 x_1+\vartheta_3 x_3)\Big)P_1\Big(x_1+x_2+1-\tfrac{\vartheta_3}{\vartheta_1}(1-u)\Big)P_3(x_3+u)dt_1dt_2du\bigg]_{\underline{x}=0}.
\end{eqnarray*}
Setting $\beta = \gamma = -R/\mathcal{L}$ and simplifying 
gives \eqref{c31}.  A similar argument produces \eqref{c23} from \eqref{c23a} and produces \eqref{c3} from \eqref{c3a}.

We prove Lemma \ref{lemmac31} in Section \ref{section:c31}, Lemma \ref{lemmac23} in Section \ref{section:c23} and Lemma \ref{lemmac3} in Section \ref{section:c3}.

\section{Various lemmas}

\subsection{The Euler-Maclaurin formula}

The following two lemmas are easy consequences of the Euler-Maclaurin formula, see [\textbf{\ref{BCY}}; Lemmas 4.4 and 4.6].

\begin{lemma}\label{600}
Suppose $y_2\leq y_1$, $|z|\ll(\log y_2)^{-1}$, and that $f_1$ and $f_2$ are smooth functions. Then we have
\begin{eqnarray*}
&&\sum_{n\leq y_2}\frac{d_k(n)}{n}\Big(\frac{y_2}{n}\Big)^{z}f_1\Big(\frac{\log y_1/n}{\log y_1}\Big)f_2\Big(\frac{\log y_2/n}{\log y_2}\Big)\\
&&\qquad=\frac{(\log y_2)^k}{(k-1)!}\int_{0}^{1}y_{2}^{zu}(1-u)^{k-1}f_1\Big(1-\frac{(1-u)\log y_2}{\log y_1}\Big)f_2(u)du+O\big((\log y_2)^{k-1}\big).
\end{eqnarray*}
\end{lemma}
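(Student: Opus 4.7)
The plan is to reduce the sum to the standard asymptotic for $\sum_{n\leq x}d_k(n)/n$ by Abel summation, and then evaluate the main integral by the substitution $u=\log(y_2/x)/\log y_2$ which straightens out the smooth weights.

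Set
\[
g(x)=\Big(\tfrac{y_2}{x}\Big)^{z}f_1\Big(\tfrac{\log y_1/x}{\log y_1}\Big)f_2\Big(\tfrac{\log y_2/x}{\log y_2}\Big),\qquad S(x)=\sum_{n\leq x}\frac{d_k(n)}{n}.
\]
The classical asymptotic, obtainable from Euler--Maclaurin applied to $D_k(x)=\sum_{n\leq x}d_k(n)$ and partial summation (equivalently, by moving the contour in $\frac{1}{2\pi i}\int \zeta(s+1)^{k}x^{s}s^{-1}\,ds$ past the order-$k$ pole at $s=0$), reads
\[
S(x)=\frac{(\log x)^{k}}{k!}+R_{k-1}(\log x)+O(x^{-\delta})
\]
for some polynomial $R_{k-1}$ of degree $k-1$ and some $\delta>0$. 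Abel summation then gives
\[
\sum_{n\leq y_2}\frac{d_k(n)}{n}g(n)=S(y_2)g(y_2)-\int_{1}^{y_2}S(x)g'(x)\,dx.
\]

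Writing $S(x)=(\log x)^{k}/k!+E(x)$ with $E(x)\ll(\log x)^{k-1}+x^{-\delta}$ and integrating the leading part by parts, the boundary piece $(\log y_2)^{k}g(y_2)/k!$ cancels exactly with the corresponding piece in $S(y_2)g(y_2)$, and the main contribution reshapes into
\[
\frac{1}{(k-1)!}\int_{1}^{y_2}g(x)\frac{(\log x)^{k-1}}{x}\,dx.
\]
The substitution $u=\log(y_2/x)/\log y_2$, for which $\log x=(1-u)\log y_2$, $dx/x=-\log y_2\,du$, $(y_2/x)^{z}=y_2^{zu}$, and $\log(y_1/x)/\log y_1=1-(1-u)\log y_2/\log y_1$, converts this integral directly into the claimed main term.

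For the error analysis, the uniform bounds $g(x)\ll 1$ and $g'(x)\ll 1/(x\log y_2)$ (which follow from $|z|\ll 1/\log y_2$ together with the smoothness of $f_1,f_2$) give $\int_{1}^{y_2}|g'(x)|\,dx\ll 1$. Hence the contribution of $R_{k-1}(\log x)$ to the integral is $O((\log y_2)^{k-1})$ and that of the $x^{-\delta}$ term is $O(1)$, while the leftover boundary piece $R_{k-1}(\log y_2)g(y_2)$ is again $O((\log y_2)^{k-1})$. The main obstacle is purely the bookkeeping needed to package all these lower-order pieces into a single $O((\log y_2)^{k-1})$ error; no arithmetic input beyond the divisor sum asymptotic is required.
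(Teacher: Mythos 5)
Your argument is correct: Abel summation against the classical asymptotic $\sum_{n\le x}d_k(n)/n=(\log x)^k/k!+R_{k-1}(\log x)+O(x^{-\delta})$, followed by the substitution $u=\log(y_2/x)/\log y_2$, gives exactly the stated main term, and the bound $g'(x)\ll 1/(x\log y_2)$ (valid since $|z|\ll(\log y_2)^{-1}$ and $\log y_2\le\log y_1$) correctly controls all lower-order pieces within $O((\log y_2)^{k-1})$. This is essentially the same elementary Euler--Maclaurin/partial-summation route as the cited source [BCY, Lemmas 4.4 and 4.6], for which the paper itself supplies no proof.
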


\begin{lemma}\label{601}
Suppose $-1\leq\sigma\leq0$. Then we have
\begin{equation*}
\sum_{n\leq y_1}\frac{d_k(n)}{n}\Big(\frac{y_1}{n}\Big)^\sigma\ll (\log y_1)^{k-1}\min\big\{|\sigma|^{-1},\log y_1\big\}.
\end{equation*}
\end{lemma}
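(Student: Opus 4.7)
The plan is to reduce the weighted divisor sum to a smooth integral via Abel summation, and then to exploit a cancellation between the boundary term and the leading contribution of the resulting integral. The key input is the asymptotic
\[
S(x):=\sum_{n\leq x}\frac{d_k(n)}{n}\;=\;\frac{(\log x)^k}{k!}+O\bigl((\log x)^{k-1}\bigr),
\]
uniformly for $x\geq 2$, which is standard and follows from Euler--Maclaurin applied iteratively (or, equivalently, from $\sum_{n\leq x}d_k(n)=xR_{k-1}(\log x)+O(x^{1-\delta})$ by partial summation).

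Applying Abel summation with weight $(y_1/x)^{\sigma}$ turns the target sum into
\[
S(y_1)\;+\;\sigma y_1^{\sigma}\int_1^{y_1}x^{-\sigma-1}S(x)\,dx.
\]
Setting $s=-\sigma\in[0,1]$ and $L=\log y_1$, the change of variable $u=\log x$ rewrites the second piece as $-se^{-sL}\int_0^L e^{su}S(e^u)\,du$. Substituting the asymptotic for $S$ and integrating by parts once, the leading $L^k/k!$ contribution cancels precisely against $S(y_1)$, leaving
\[
\frac{e^{-sL}}{(k-1)!}\int_0^L e^{su}u^{k-1}\,du\;+\;O\!\Bigl(L^{k-1}+se^{-sL}\int_0^L e^{su}u^{k-1}\,du\Bigr).
\]

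Thus the problem reduces to bounding $J(s):=e^{-sL}\int_0^L e^{su}u^{k-1}\,du$ uniformly in $s\in[0,1]$. If $sL\leq 1$, then the exponential factors are $\asymp 1$, so $J(s)\ll L^k$, which matches $L^{k-1}\min(L,1/|\sigma|)=L^k$ in this range. If $sL\geq 1$, the crude bound $\int_0^L e^{su}u^{k-1}\,du\leq L^{k-1}(e^{sL}-1)/s$ gives $J(s)\ll L^{k-1}/s=L^{k-1}/|\sigma|$. Combining both regimes (and absorbing the $L^{k-1}$ error into $L^{k-1}\min(L,1/|\sigma|)$, which is $\geq L^{k-1}$ because $|\sigma|\leq 1$) produces the stated bound.

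The only step that requires genuine attention is the cancellation of the $L^k/k!$ main terms between $S(y_1)$ and the main part of the integral: without recognising it, one only obtains the uniform bound $O(L^k)$, which fails to deliver the $|\sigma|^{-1}$ saving in the regime $|\sigma|L\gg 1$. Once the cancellation is identified, the remaining estimates are entirely routine.
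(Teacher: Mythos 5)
Your argument is correct: the Abel summation identity, the cancellation of the $(\log y_1)^k/k!$ terms, and the two-regime bound on $e^{-sL}\int_0^L e^{su}u^{k-1}\,du$ all check out, and the degenerate case $\sigma=0$ is trivially covered since the second piece carries a factor of $\sigma$. The paper gives no proof of this lemma, citing [BCY, Lemma 4.6] and calling it an easy consequence of the Euler--Maclaurin formula, which is exactly what your partial-summation argument supplies, so your route is essentially the intended one.
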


\subsection{Mellin pairs}

By convention, we set $P_j(x) = 0$ for $j=1,2,3$ and $x\leq0$. Note that with this definition we have
\begin{equation}\label{Mellin}
P_1\Big(\frac{\log y_1/n}{\log y_1}\Big)=\sum_{j}\frac{a_j j!}{(\log y_1)^j}\frac{1}{2\pi i}\int_{(1)}\Big(\frac{y_1}{n}\Big)^u\frac{du}{u^{j+1}}
\end{equation}
for all $n$. Similar expressions holds for $P_2\big(\frac{\log y_2/n}{\log y_2}\big)$ and $P_3\big(\frac{\log y_3/n}{\log y_3}\big)$.

\section{Proof of Lemma \ref{lemmac31}}\label{section:c31}

\subsection{Reduction to a contour integral}

We shall used the following unproved twisted third moment of the Riemann zeta-function. 

\begin{theorem}
Suppose $H$ and $K$ satisfy $H\leq T^{4/7-\varepsilon}$ and $K\leq T^{1/4-\varepsilon}$, or $HK\leq T^{3/4-\varepsilon}$ and $K\leq T^{1/2-\varepsilon}$\footnote{Theorem 5.1 is likely to hold for larger ranges of $H$ and $K$. These conditions, however, suffice for our purposes.}. Then we have
\begin{eqnarray*}
&&\sum_{\substack{h\leq H\\k\leq K}}\frac{a_h\overline{a_k}}{\sqrt{hk}}\int_{-\infty}^{\infty}\zeta(\tfrac{1}{2}+\alpha+it)\zeta(\tfrac{1}{2}+\beta+it)\zeta(\tfrac{1}{2}+\gamma-it)\Big(\frac{h}{k}\Big)^{it}w(t)dt\\
&&\qquad\qquad\qquad=\sum_{\substack{h\leq H\\k\leq K}}\frac{a_h\overline{a_k}}{\sqrt{hk}}\int_{-\infty}^{\infty}w(t)\bigg\{Z_{\alpha,\beta,\gamma}(h,k)+\Big(\frac{t}{2\pi}\Big)^{-(\alpha+\gamma)}Z_{-\gamma,\beta,-\alpha}(h,k)\\
&&\qquad\qquad\qquad\qquad\qquad\qquad\qquad\qquad\qquad+\Big(\frac{t}{2\pi}\Big)^{-(\beta+\gamma)}Z_{\alpha,-\gamma,-\beta}(h,k)\bigg\}dt+O_\varepsilon(T^{1-\varepsilon})
\end{eqnarray*}
uniformly for $\alpha,\beta,\gamma\ll \mathcal{\mathcal{L}}^{-1}$, where 
\[
Z_{\alpha,\beta,\gamma}(h,k)=\sum_{kab=hc}\frac{1}{a^{1/2+\alpha}b^{1/2+\beta}c^{1/2+\gamma}}.
\]
\end{theorem}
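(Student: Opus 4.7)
The plan is to reduce the twisted third moment to a twisted second moment via an asymmetric approximate functional equation, and then to invoke (an appropriate extension of) the twisted second moment formula of Balasubramanian \textit{et al.}~[\textbf{\ref{BCH-B}}], following the technical framework of Bettin \textit{et al.}~[\textbf{\ref{BBLR}}] to obtain the stated uniformity and power savings.

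First I would apply the asymmetric approximate functional equation to the single factor $\zeta(\tfrac12+\gamma-it)$, writing
\[
\zeta(\tfrac12+\gamma-it)=\sum_{n\leq X}\frac{W_\gamma(n/X)}{n^{1/2+\gamma-it}}+\chi(\tfrac12+\gamma-it)\sum_{n\leq Y}\frac{W_{-\gamma}(n/Y)}{n^{1/2-\gamma+it}}+O_\varepsilon(T^{-A}),
\]
with $XY\asymp t/(2\pi)$ and $W_\gamma$ a suitable smooth cutoff. Substituting into the left-hand side of the claim replaces the third moment integral by a collection of twisted second moment integrals of the form
\[
\sum_{h,k,n}\frac{a_h\overline{a_k}W_\gamma(n/X)}{(hk)^{1/2}n^{1/2+\gamma}}\int_{-\infty}^{\infty}\zeta(\tfrac12+\alpha+it)\zeta(\tfrac12+\beta+it)\Big(\frac{hn}{k}\Big)^{it}w(t)\,dt,
\]
together with a dual piece containing the $\chi$-factor; the effective twist length is now $hn/k$ with $n$ ranging up to $X$.

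Next I would feed each inner integral into the twisted second moment formula for $|\zeta|^2$. This yields two main terms (a diagonal contribution and a swap contribution from the usual second-moment recipe) together with a power-saving error. Summing the diagonal main terms against the weights $W_\gamma(n/X)n^{-1/2-\gamma}$ and collapsing the resulting $n$-sum via a contour shift recovers exactly $Z_{\alpha,\beta,\gamma}(h,k)$ after unravelling its Euler product. The swap term together with Stirling's asymptotic $\chi(\tfrac12+s-it)\sim(t/(2\pi))^{-s}$ produces the expected $(t/(2\pi))^{-(\alpha+\gamma)}Z_{-\gamma,\beta,-\alpha}(h,k)$, while the dual piece of the approximate functional equation accounts for the final $(t/(2\pi))^{-(\beta+\gamma)}Z_{\alpha,-\gamma,-\beta}(h,k)$, exactly matching the three-term structure in the claim.

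The hard part is to bound the error uniformly across the two admissibility regimes for $(H,K)$. After Voronoi summation in the twisted second moment one is left with sums of Kloosterman sums to moduli of size $\asymp(HKX)^{1/2}$; controlling these beyond the Weil bound requires the spectral Kuznetsov formula combined with the large sieve inequalities of Deshouillers--Iwaniec, exactly as exploited in [\textbf{\ref{BBLR}}]. The separation into the regimes $H\leq T^{4/7-\varepsilon},\,K\leq T^{1/4-\varepsilon}$ and $HK\leq T^{3/4-\varepsilon},\,K\leq T^{1/2-\varepsilon}$ would reflect different optimal choices of the cutoff $X$ together with the applicable range of the spectral large sieve. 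Adapting the machinery of [\textbf{\ref{BBLR}}] from the twisted fourth moment to this twisted third moment setting, with the required holomorphic uniformity in $\alpha,\beta,\gamma\ll\mathcal{L}^{-1}$, is the principal technical obstacle that this note leaves open.
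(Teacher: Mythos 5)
First, note that the paper does not actually prove this statement: it is introduced as ``the following unproved twisted third moment'', and only the concluding section sketches a possible attack. That sketch goes a different way from you: it applies the approximate functional equation to the \emph{product} $\zeta(\tfrac12+\beta+it)\zeta(\tfrac12+\gamma-it)$ (one factor at $+it$, one at $-it$, hence a genuine second-moment AFE whose swap term carries the factor $(t/2\pi)^{-(\beta+\gamma)}$), expands the remaining factor $\zeta(\tfrac12+\alpha+it)$ as a long Dirichlet polynomial of length $T^{1+\varepsilon}$, extracts the diagonal $klm=hn$ and the zero frequency of a Poisson summation as main terms, and bounds the off-diagonal using only Weil's bound for Kloosterman sums --- no spectral theory.

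Your reduction has a concrete flaw at its first step. You apply the AFE to $\zeta(\tfrac12+\gamma-it)$, which is the \emph{only} factor at $-it$. What then remains inside the $t$-integral is $\zeta(\tfrac12+\alpha+it)\zeta(\tfrac12+\beta+it)(hn/k)^{it}$, with both zeta factors on the same side of the conjugation. This is not a twisted second moment of $|\zeta|^2$, and the theorem of Balasubramanian--Conrey--Heath-Brown does not apply to it; evaluating $\int\zeta(\tfrac12+\alpha+it)\zeta(\tfrac12+\beta+it)X^{it}w(t)\,dt$ requires the AFE for $\zeta^2$ and a stationary-phase analysis of $\chi(\tfrac12+it)^{2}(Xj)^{it}$, an entirely different computation. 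The inconsistency is visible in your accounting of main terms: the inner integral you display contains no $\gamma$, so the ``swap contribution from the usual second-moment recipe'' would produce a factor $(t/2\pi)^{-(\alpha+\beta)}$ --- a term which is conspicuously absent from the true answer --- and cannot produce the claimed $(t/2\pi)^{-(\alpha+\gamma)}Z_{-\gamma,\beta,-\alpha}$. To obtain a genuine BCHB-type object you would have to expand one of the $+it$ factors instead, say $\zeta(\tfrac12+\beta+it)$, leaving $\int\zeta(\tfrac12+\alpha+it)\zeta(\tfrac12+\gamma-it)(h/km)^{it}w(t)\,dt$ with twist of length up to $K T^{1/2}$; justifying the stated ranges $H\le T^{4/7-\varepsilon}$, $K\le T^{1/4-\varepsilon}$ (or $HK\le T^{3/4-\varepsilon}$, $K\le T^{1/2-\varepsilon}$) is then precisely the open difficulty, and your appeal to the machinery of Bettin \emph{et al.} leaves it untouched. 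As written, the proposal neither generates the three main terms correctly nor controls the error in the claimed ranges.
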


Recall that $I_{31}(\alpha, \beta,\gamma)$ is defined by \eqref{I31}.  We have
\[
I_{31}(\alpha, \beta,\gamma)=I_{31}^1+I_{31}^2+I_{31}^3+O_\varepsilon(T^{1-\varepsilon}),
\]
where
\begin{equation*}
 I_{31}^1= \widehat{w}(0)
\sum_{l,m} 
\frac{\mu(m) \mu_2(l)P_1\big(\frac{\log y_1/m}{\log y_1}\big) P_3\big(\frac{\log y_3/l}{\log y_3}\big) }{\sqrt{lm}} \sum_{lab=mc}\frac{1}{a^{1/2+\alpha}b^{1/2+\beta}c^{1/2+\gamma}},
\end{equation*}
$I_{31}^{2}$ is obtained by multiplying $I_{31}^{1}$ with $T^{-(\alpha+\gamma)}$ and changing the shifts $\alpha\leftrightarrow-\gamma$, $\gamma\leftrightarrow-\alpha$, and $I_{31}^{3}$ is obtained by multiplying $I_{31}^{1}$ with $T^{-(\beta+\gamma)}$ and changing the shifts $\beta\leftrightarrow-\gamma$, $\gamma\leftrightarrow-\beta$. 

We shall first work on $ I_{31}^1$. In view of \eqref{Mellin} we get
\begin{eqnarray*}
I_{31}^1&=&\widehat{w}(0)\sum_{i,j}\frac{a_ic_j i!j!}{(\log y_1)^{i}(\log y_3)^j}\Big(\frac{1}{2\pi i}\Big)^2\int_{(1)}\int_{(1)}y_1^{u}y_{3}^v\\
&&\qquad\qquad\sum_{lab=mc}\frac{\mu(m) \mu_2(l)}{m^{1/2+u}l^{1/2+v}a^{1/2+\alpha}b^{1/2+\beta}c^{1/2+\gamma}} \frac{du}{u^{i+1}}\frac{dv}{v^{j+1}}.
\end{eqnarray*}
The Euler product implies that
\begin{eqnarray}\label{55}
&&\sum_{lab=mc}\frac{\mu(m) \mu_2(l)}{m^{1/2+u}l^{1/2+v}a^{1/2+\alpha}b^{1/2+\beta}c^{1/2+\gamma}}\nonumber\\
&&\qquad\qquad=A(\alpha,\beta,\gamma,u,v)\frac{\zeta(1+\alpha+\gamma)\zeta(1+\beta+\gamma)\zeta(1+u+v)^2}{\zeta(1+\alpha+u)\zeta(1+\beta+u)\zeta(1+\gamma+v)^2},
\end{eqnarray}
where $A(\alpha,\beta,\gamma,u,v)$ is an arithmetical factor converging absolutely in a product of half-planes containing the origin. Hence
\begin{equation}\label{40}
I_{31}^1=\widehat{w}(0)\zeta(1+\alpha+\gamma)\zeta(1+\beta+\gamma)\sum_{i,j}\frac{a_ic_j i!j!}{(\log y_1)^{i}(\log y_3)^j}J_{i,j},
\end{equation}
where
\[
J_{i,j}=\Big(\frac{1}{2\pi i}\Big)^2\int_{(1)}\int_{(1)}y_1^{u}y_{3}^v\frac{A(\alpha,\beta,\gamma,u,v)\zeta(1+u+v)^2}{\zeta(1+\alpha+u)\zeta(1+\beta+u)\zeta(1+\gamma+v)^2}\frac{du}{u^{i+1}}\frac{dv}{v^{j+1}}.
\]

Using the Dirichlet series for $\zeta(1+u+v)^2$ and reversing the order of summation and integration, we obtain
\begin{eqnarray}\label{56}
J_{i,j}&=&\sum_{n\leq y_3}\frac{d(n)}{n}\Big(\frac{1}{2\pi i}\Big)^2\int_{(1)}\int_{(1)}\Big(\frac{y_1}{n}\Big)^{u}\Big(\frac{y_3}{n}\Big)^{v}\nonumber\\
&&\qquad\qquad\qquad\frac{A(\alpha,\beta,\gamma,u,v)}{\zeta(1+\alpha+u)\zeta(1+\beta+u)\zeta(1+\gamma+v)^2}\frac{du}{u^{i+1}}\frac{dv}{v^{j+1}}.
\end{eqnarray}
Note that here we are able to restrict the sum over $n$ to $n\leq y_3$ by moving the $v$-integral far to the right. We now move the contours of integration to $\textrm{Re}(u)=\textrm{Re}(v)\asymp \mathcal{L}^{-1}$. Bounding the integrals trivially shows that $J_{i,j}\ll \mathcal{L}^{i+j-2}$. Hence from the Taylor series $A(\alpha,\beta,\gamma,u,v)=A(0,0,0,0,0)+O(\mathcal{L}^{-1})+O(|u|+|v|)$, we can replace $A(\alpha,\beta,\gamma,u,v)$ by $A(0,0,0,0,0)$ in $J_{i,j}$ with an error of size $O(\mathcal{L}^{i+j-3})$. By letting $\alpha=\beta=\gamma=u=v=s$ in \eqref{55}, it is easy to verify that $A(0,0,0,0,0)=1$. The $u$ and $v$ variables in \eqref{56} are now separated so that
\begin{equation}\label{K1}
J_{i,j}=\sum_{n\leq y_3}\frac{d(n)}{n}M_i(\alpha,\beta)M_j(\gamma)+O(\mathcal{L}^{i+j-3}),
\end{equation}
where
\begin{equation*}
M_i(\alpha,\beta)=\frac{1}{2\pi i}\int_{(\mathcal{L}^{-1})}\Big(\frac{y_1}{n}\Big)^{u}\frac{1}{\zeta(1+\alpha+u)\zeta(1+\beta+u)}\frac{du}{u^{i+1}}
\end{equation*}
and
\[
M_j(\gamma)=\frac{1}{2\pi i}\int_{(\mathcal{L}^{-1})}\Big(\frac{y_3}{n}\Big)^{v}\frac{1}{\zeta(1+\gamma+v)^2}\frac{dv}{v^{j+1}}.
\]

The expression $M_i(\alpha,\beta)$ was evaluated in [\textbf{\ref{BCY}}; Lemma 5.7]
\begin{equation}\label{M1}
M_i(\alpha,\beta)=\frac{1}{i!(\log y_1)^2}\frac{d^2}{dx_1dx_2}\bigg[y_{1}^{\alpha x_1+\beta x_2}\Big((x_1+x_2)\log y_1+\log y_1/n\Big)^i\bigg]_{x_1=x_2=0}+O(\mathcal{L}^{i-3}).
\end{equation}
We evaluate $M_j(\gamma)$ with the following lemma.

\begin{lemma}\label{M2}
Suppose $j\geq 2$. Then for some $\nu\asymp (\log\log y_3)^{-1}$ we have
\begin{eqnarray*}
M_j(\gamma)=\frac{1}{j!(\log y_3)^2}\frac{d^2}{dx_3^2}\bigg[y_{3}^{\gamma x_3}\big(x_3\log y_3+\log y_3/n\big)^j\bigg]_{x_3=0}+O(\mathcal{L}^{j-3})+O_\varepsilon\bigg(\Big(\frac{y_3}{n}\Big)^{-\nu}\mathcal{L}^\varepsilon\bigg).
\end{eqnarray*}
\end{lemma}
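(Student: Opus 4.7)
Near $v=0$ one has the Laurent expansion
\[
\frac{1}{\zeta(1+\gamma+v)^{2}}=(\gamma+v)^{2}+h(v),\qquad h(v)=O\big((|\gamma|+|v|)^{3}\big),
\]
obtained from $1/\zeta(1+s)=s-\gamma_{0}s^{2}+O(s^{3})$ (with $\gamma_{0}$ the Euler-Mascheroni constant). The plan is to shift the line of integration in $M_{j}(\gamma)$ from $\textrm{Re}(v)=\mathcal{L}^{-1}$ to $\textrm{Re}(v)=-\nu$, crossing only the pole of order $j+1$ at $v=0$ of the factor $v^{-(j+1)}$. To keep the shifted contour in the domain of analyticity of $1/\zeta(1+\gamma+v)^{2}$, one chooses $\nu\asymp(\log\log y_{3})^{-1}$ so that the contour (with the standard zigzag in the high-frequency tails) lies in the de la Vall\'ee-Poussin zero-free region and satisfies $1/|\zeta(1+\gamma+v)|\ll\log(|t|+2)$ throughout.

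By Cauchy's residue theorem, the pole at $v=0$ contributes
\[
\frac{1}{j!}\frac{d^{j}}{dv^{j}}\bigg[\Big(\frac{y_{3}}{n}\Big)^{v}\frac{1}{\zeta(1+\gamma+v)^{2}}\bigg]_{v=0}.
\]
Writing $\ell=\log(y_{3}/n)$ and splitting by the Laurent expansion, the $(\gamma+v)^{2}$ piece evaluates via the Leibniz rule to $\gamma^{2}\ell^{j}/j!+2\gamma\ell^{j-1}/(j-1)!+\ell^{j-2}/(j-2)!$. A direct computation (expanding $y_{3}^{\gamma x_{3}}=e^{\gamma x_{3}\log y_{3}}$ in powers of $x_{3}$ and applying the binomial theorem to $(x_{3}\log y_{3}+\ell)^{j}$) identifies this sum with $(j!(\log y_{3})^{2})^{-1}(d^{2}/dx_{3}^{2})[y_{3}^{\gamma x_{3}}(x_{3}\log y_{3}+\ell)^{j}]_{x_{3}=0}$, the claimed main term. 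The $h(v)$ piece contributes $O(\mathcal{L}^{j-3})$: for $k\leq 2$, each derivative $h^{(k)}(0)$ carries a spare factor of $|\gamma|\ll\mathcal{L}^{-1}$ compared with the corresponding derivative of $(\gamma+v)^{2}$; for $k\geq 3$, the trivial bound $h^{(k)}(0)=O(1)$ is paired with $\binom{j}{k}\ell^{j-k}/j!\ll\mathcal{L}^{j-k}$.

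The remaining integral on $\textrm{Re}(v)=-\nu$ is majorized by $(y_{3}/n)^{-\nu}\int\log(|t|+2)^{2}|v|^{-(j+1)}\,dt$, which converges for $j\geq 2$ and yields the error $O_{\varepsilon}((y_{3}/n)^{-\nu}\mathcal{L}^{\varepsilon})$. The main technical obstacle is the calibration of $\nu$: too large a value pushes the contour outside the zero-free region, while too small a value weakens the savings factor $(y_{3}/n)^{-\nu}$ on which we rely when $n$ is somewhat smaller than $y_{3}$. The scaling $\nu\asymp(\log\log y_{3})^{-1}$ is precisely what the de la Vall\'ee-Poussin region allows while still controlling the polynomial-in-$\log|t|$ growth of $1/\zeta^{2}$ on the shifted contour, and this matches the structure of the argument used in Lemma 5.7 of [\textbf{\ref{BCY}}] for the unsquared factor.
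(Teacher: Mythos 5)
Your argument is correct and is essentially the paper's: shift the contour into a zero-free region where $1/\zeta\ll\log(2+|t|)$, identify the residue at $v=0$ as the main term by Taylor-expanding $1/\zeta(1+\gamma+v)^2=(\gamma+v)^2+O((|\gamma|+|v|)^3)$, and bound the shifted integral by $(y_3/n)^{-\nu}\mathcal{L}^{\varepsilon}$. The paper realizes your ``zigzag'' concretely by truncating at height $Y\asymp\mathcal{L}$ and moving only the segment $|t|\le Y$ to $\mathrm{Re}(v)=-c/\log Y$ (whence $\nu\asymp(\log\log y_3)^{-1}$), and it computes the residue by trivially bounding a small circle integral rather than via Leibniz, but these are only cosmetic differences.
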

\begin{proof}
Let $Y = o(T)$ be a large parameter to be chosen later.  By Cauchy's theorem, $M_j(\gamma)$ is equal to the residue at $v=0$ plus integrals over the line segments $\mathcal{C}_1=\{v=\mathcal{L}^{-1}+it,t\in\mathbb{R},|t|\geq Y\}$, $\mathcal{C}_{2}=\{v=\sigma\pm iY,-\frac{c}{\log{Y}}\leq\sigma\leq \mathcal{L}^{-1}\}$ and $\mathcal{C}_3=\{v=-\frac{c}{\log{Y}}+it,|t|\leq Y\}$, where $c$ is some fixed positive constant such that $\zeta(1+\gamma+v)$ has no zeros in the region on the right hand side of the contour determined by the $\mathcal{C}_j$. Furthermore, we require that for such $c$ we have $1/\zeta(\sigma + it) \ll \log(2 + |t|)$ in this region (see [\textbf{\ref{T}}; Theorem 3.11]).  Then the integral over $\mathcal{C}_1$ is 
\begin{equation*}
\ll (\log{Y})^2/Y^{j} \ll_\varepsilon Y^{-2+\varepsilon},
\end{equation*} 
since $j\geq2$. The integral over $\mathcal{C}_2$ is 
\begin{equation*}
\ll (\log{Y})/Y^{j+1} \ll_\varepsilon Y^{-3+\varepsilon}.
\end{equation*}  
Finally, the contribution from $\mathcal{C}_3$ is 
\begin{equation*}
\ll (\log Y)^j \Big(\frac{y_3}{n}\Big)^{-c/\log Y}\ll_\varepsilon \Big(\frac{y_3}{n}\Big)^{-c/\log Y}Y^{\varepsilon}.
\end{equation*}  
Choosing $Y \asymp \mathcal{L}$ gives an error so far of size $O_\varepsilon\big((y_3/n)^{-\nu} \mathcal{L}^{\varepsilon}\big) + O_\varepsilon(\mathcal{L}^{-2+\varepsilon})$.

For the residue at $v=0$, we write this as
\begin{equation*}
\frac{1}{2 \pi i} \oint \Big(\frac{y_3}{n}\Big)^v \frac{1}{\zeta(1+\gamma+v)^2}  \frac{dv}{v^{j+1}},
\end{equation*}
where the contour is a circle of radius $\asymp \mathcal{L}^{-1}$ around the origin. This integral is trivially bounded by $O(\mathcal{L}^{j-2})$, hence by taking the first term in the Taylor series of $\zeta(1+\gamma+v)$ we get
\[
\textrm{Res}_{v=0}=\frac{\gamma^2(\log y_3/n)^j}{j!}+\frac{2\gamma(\log y_3/n)^{j-1}}{(j-1)!}+\frac{(\log y_3/n)^{j-2}}{(j-2)!}+O(\mathcal{L}^{j-3}).
\]
The above main term can be written in a compact form 
\[
\frac{1}{j!(\log y_3)^2}\frac{d^2}{dx_3^2}\bigg[y_{3}^{\gamma x_3}\big(x_3\log y_3+\log y_3/n\big)^j\bigg]_{x_3=0}
\]
and the lemma follows.
\end{proof}

In view of \eqref{K1}, \eqref{M1} and Lemma \ref{M2} we get
\begin{eqnarray*}
J_{i,j}&=&\frac{(\log y_1)^{i-2}(\log y_3)^{j-2}}{i!j!}\frac{d^4}{dx_1dx_2dx_3^2}\bigg[y_{1}^{\alpha x_1+\beta x_2}y_{3}^{\gamma x_3}\\
&&\qquad\qquad\sum_{n\leq y_3}\frac{d(n)}{n}\Big(x_1+x_2+\frac{\log y_1/n}{\log y_1}\Big)^i\Big(x_3+\frac{\log y_3/n}{\log y_3}\Big)^j\bigg]_{\underline{x}=0}\\
&&\qquad\qquad\qquad\qquad+O_\varepsilon\bigg(\mathcal{L}^{i-2+\varepsilon}\sum_{n\leq y_3}\frac{d(n)}{n}\Big(\frac{y_3}{n}\Big)^{-\nu}\bigg)+O(\mathcal{L}^{i+j-3}).
\end{eqnarray*}
Using Lemmas \ref{600} and \ref{601} this is equal to
\begin{eqnarray*}
&&\frac{(\log y_1)^{i-2}(\log y_3)^j}{i!j!}\frac{d^4}{dx_1dx_2dx_3^2}\bigg[y_{1}^{\alpha x_1+\beta x_2}y_{3}^{\gamma x_3}\\
&&\qquad\int_{0}^{1}(1-u)\Big(x_1+x_2+1-\tfrac{\vartheta_3}{\vartheta_1}(1-u)\Big)^i(x_3+u)^jdu\bigg]_{\underline{x}=0}+O_\varepsilon(\mathcal{L}^{i-1+\varepsilon})+O(\mathcal{L}^{i+j-3}).
\end{eqnarray*}
As $j\geq2$, putting this back to \eqref{40} we get
\begin{eqnarray*}
I_{31}^1&=&\frac{\widehat{w}(0)}{(\log y_1)^2}\zeta(1+\alpha+\gamma)\zeta(1+\beta+\gamma)\frac{d^4}{dx_1dx_2dx_3^2}\bigg[y_{1}^{\alpha x_1+\beta x_2}y_{3}^{\gamma x_3}\\
&&\qquad\int_{0}^{1}(1-u)P_1\Big(x_1+x_2+1-\tfrac{\vartheta_3}{\vartheta_1}(1-u)\Big)P_3(x_3+u)du\bigg]_{\underline{x}=0}+O_\varepsilon(T\mathcal{L}^{-1+\varepsilon}).
\end{eqnarray*}

\subsection{Deduction of Lemma \ref{lemmac31}}

Combining $I_{31}^1$, $I_{31}^2$ and $I_{31}^3$ we have
\begin{eqnarray*}
I_{31}&=&\frac{\widehat{w}(0)}{(\log y_1)^2}\frac{d^4}{dx_1dx_2dx_3^2}\bigg[\int_{0}^{1}U_1(\underline{x})(1-u)P_1\Big(x_1+x_2+1-\tfrac{\vartheta_3}{\vartheta_1}(1-u)\Big)P_3(x_3+u)du\bigg]_{\underline{x}=0}\\
&&\qquad\qquad+O_\varepsilon(T\mathcal{L}^{-1+\varepsilon}),
\end{eqnarray*}
where
\begin{eqnarray*}
U_1=\frac{y_{1}^{\alpha x_1+\beta x_2}y_{3}^{\gamma x_3}}{(\alpha+\gamma)(\beta+\gamma)}-\frac{T^{-(\alpha+\gamma)}y_{1}^{-\gamma x_1+\beta x_2}y_{3}^{-\alpha x_3}}{(\alpha+\gamma)(\beta-\alpha)}-\frac{T^{-(\beta+\gamma)}y_{1}^{\alpha x_1-\gamma x_2}y_{3}^{-\beta x_3}}{(\alpha-\beta)(\beta+\gamma)}.
\end{eqnarray*}
Using the identity
\[
\frac{1}{(\alpha+\gamma)(\beta+\gamma)}=\frac{1}{(\alpha+\gamma)(\beta-\alpha)}+\frac{1}{(\alpha-\beta)(\beta+\gamma)}
\]
and the integral formula
\begin{equation}\label{int}
\frac{1-z^{-(\alpha+\beta)}}{\alpha+\beta}=(\log z)\int_{0}^{1}z^{-(\alpha+\beta)t}dt,
\end{equation}
we can write
\begin{eqnarray*}
U_1&=&\frac{y_{1}^{\alpha x_1+\beta x_2}y_{3}^{\gamma x_3}-T^{-(\alpha+\gamma)}y_{1}^{-\gamma x_1+\beta x_2}y_{3}^{-\alpha x_3}}{(\alpha+\gamma)(\beta-\alpha)}-\frac{y_{1}^{\alpha x_1+\beta x_2}y_{3}^{\gamma x_3}-T^{-(\beta+\gamma)}y_{1}^{\alpha x_1-\gamma x_2}y_{3}^{-\beta x_3}}{(\beta-\alpha)(\beta+\gamma)}\\
&=&\frac{y_{1}^{\alpha x_1+\beta x_2}y_{3}^{\gamma x_3}}{\beta-\alpha}\Big(\frac{1-(Ty_{1}^{x_1}y_{3}^{x_3})^{-(\alpha+\gamma)}}{\alpha+\gamma}\Big)-\frac{y_{1}^{\alpha x_1+\beta x_2}y_{3}^{\gamma x_3}}{\beta-\alpha}\Big(\frac{1-(Ty_{1}^{x_2}y_{3}^{x_3})^{-(\beta+\gamma)}}{\beta+\gamma}\Big)\\
&=&\frac{\mathcal{L}}{\beta-\alpha}(1+\vartheta_1 x_1+\vartheta_3 x_3)y_{1}^{\alpha x_1+\beta x_2}y_{3}^{\gamma x_3}\int_{0}^{1}(Ty_{1}^{x_1}y_{3}^{x_3})^{-(\alpha+\gamma)t_1}dt_1\\
&&\qquad\qquad-\frac{\mathcal{L}}{\beta-\alpha}(1+\vartheta_1 x_2+\vartheta_3 x_3)y_{1}^{\alpha x_1+\beta x_2}y_{3}^{\gamma x_3}\int_{0}^{1}(Ty_{1}^{x_2}y_{3}^{x_3})^{-(\beta+\gamma)t_1}dt_1.
\end{eqnarray*}
Hence
\begin{eqnarray*}
I_{31}&=&\frac{\widehat{w}(0)\mathcal{L}}{(\beta-\alpha)(\log y_1)^2}\frac{d^4}{dx_1dx_2dx_3^2}\bigg[\int_{0}^{1}\int_{0}^{1}y_{1}^{\alpha x_1+\beta x_2}y_{3}^{\gamma x_3}(Ty_{1}^{x_1}y_{3}^{x_3})^{-(\alpha+\gamma)t_1}(1+\vartheta_1 x_1+\vartheta_3 x_3)\\
&&\qquad\qquad (1-u)P_1\Big(x_1+x_2+1-\tfrac{\vartheta_3}{\vartheta_1}(1-u)\Big)P_3(x_3+u)dt_1du\bigg]_{\underline{x}=0}\\
&&-\frac{\widehat{w}(0)\mathcal{L}}{(\beta-\alpha)(\log y_1)^2}\frac{d^4}{dx_1dx_2dx_3^2}\bigg[\int_{0}^{1}\int_{0}^{1}y_{1}^{\alpha x_1+\beta x_2}y_{3}^{\gamma x_3}(Ty_{1}^{x_2}y_{3}^{x_3})^{-(\beta+\gamma)t_1}(1+\vartheta_1 x_2+\vartheta_3 x_3)\\
&&\qquad\qquad (1-u)P_1\Big(x_1+x_2+1-\tfrac{\vartheta_3}{\vartheta_1}(1-u)\Big)P_3(x_3+u)dt_1du \bigg]_{\underline{x}=0}+O_\varepsilon(T\mathcal{L}^{-1+\varepsilon}).
\end{eqnarray*}
Changing the roles of the variables $x_1$ and $x_2$ in the second term we obtain
\begin{eqnarray}\label{900}
I_{31}&=&\frac{\widehat{w}(0)\mathcal{L}}{(\log y_1)^2}\frac{d^4}{dx_1dx_2dx_3^2}\bigg[\int_{0}^{1}\int_{0}^{1}V_1(\underline{x})(1+\vartheta_1 x_1+\vartheta_3 x_3)\\
&&\qquad (1-u)P_1\Big(x_1+x_2+1-\tfrac{\vartheta_3}{\vartheta_1}(1-u)\Big)P_3(x_3+u)dt_1du\bigg]_{\underline{x}=0}+O_\varepsilon(T\mathcal{L}^{-1+\varepsilon}),\nonumber
\end{eqnarray}
where
\begin{eqnarray*}
V_1=\frac{y_{1}^{\alpha x_1+\beta x_2}y_{3}^{\gamma x_3}(Ty_{1}^{x_1}y_{3}^{x_3})^{-(\alpha+\gamma)t_1}-y_{1}^{\alpha x_2+\beta x_1}y_{3}^{\gamma x_3}(Ty_{1}^{x_1}y_{3}^{x_3})^{-(\beta+\gamma)t_1}}{\beta-\alpha}.
\end{eqnarray*}
Using \eqref{int} again we have
\begin{eqnarray*}
V_1&=&y_{1}^{\alpha x_1+\beta x_2}y_{3}^{\gamma x_3}(Ty_{1}^{x_1}y_{3}^{x_3})^{-(\alpha+\gamma)t_1}\Big(\frac{1-\big(y_{1}^{-x_1+x_2}(Ty_{1}^{x_1}y_{3}^{x_3})^{t_1}\big)^{-(\beta-\alpha)}}{\beta-\alpha}\Big)\\
&=&\mathcal{L}\Big(-\vartheta_1(x_1-x_2)+t_1(1+\vartheta_1 x_1+\vartheta_3 x_3)\Big)y_{1}^{\alpha x_1+\beta x_2}y_{3}^{\gamma x_3}(Ty_{1}^{x_1}y_{3}^{x_3})^{-(\alpha+\gamma)t_1}\\
&&\qquad\int_{0}^{1}\big(y_{1}^{-x_1+x_2}(Ty_{1}^{x_1}y_{3}^{x_3})^{t_1}\big)^{-(\beta-\alpha)t_2}dt_2.
\end{eqnarray*}
Lemma \ref{lemmac31} follows from this and \eqref{900}.

\section{Proof of Lemma \ref{lemmac23}}\label{section:c23}

\subsection{Reduction to a contour integral}

Recall that $I_{23}(\alpha, \beta,\gamma)$ is defined by \eqref{I23}.  We have
\[
I_{23}(\alpha, \beta,\gamma)=I_{23}^1+I_{23}^2+I_{23}^3,
\]
where
\begin{equation*}
 I_{23}^1= \widehat{w}(0)
\sum_{l,m,n} 
\frac{\mu_2(m)\mu_2(l)P_2(\frac{\log y_2/mn}{\log y_2}) P_3(\frac{\log y_3/l}{\log y_3}) }{\sqrt{lmn}} \sum_{mab=lnc}\frac{1}{a^{1/2+\alpha}b^{1/2+\beta}c^{1/2+\gamma}},
\end{equation*}
$I_{23}^{2}$ is obtained by multiplying $I_{23}^{1}$ with $T^{-(\alpha+\gamma)}$ and changing the shifts $\alpha\leftrightarrow-\gamma$, $\gamma\leftrightarrow-\alpha$, and $I_{23}^{3}$ is obtained by multiplying $I_{23}^{1}$ with $T^{-(\beta+\gamma)}$ and changing the shifts $\beta\leftrightarrow-\gamma$, $\gamma\leftrightarrow-\beta$. 

We first work on $I_{23}^1$. In view of \eqref{Mellin} we get
\begin{eqnarray*}
I_{23}^1&=&\widehat{w}(0)\sum_{i,j}\frac{b_ic_j i!j!}{(\log y_2)^{i}(\log y_3)^j}\Big(\frac{1}{2\pi i}\Big)^2\int_{(1)}\int_{(1)}y_2^{u}y_{3}^v\\
&&\qquad\qquad\sum_{mab=lnc}\frac{\mu_2(m)\mu_2(l)}{(mn)^{1/2+u}l^{1/2+v}a^{1/2+\alpha}b^{1/2+\beta}c^{1/2+\gamma}} \frac{du}{u^{i+1}}\frac{dv}{v^{j+1}}.
\end{eqnarray*}
The arithmetical sum is
\begin{eqnarray}\label{755}
&&\sum_{mab=lnc}\frac{\mu_2(m)\mu_2(l)}{(mn)^{1/2+u}l^{1/2+v}a^{1/2+\alpha}b^{1/2+\beta}c^{1/2+\gamma}}\\
&&\qquad=B(\alpha,\beta,\gamma,u,v)\frac{\zeta(1+\alpha+\gamma)\zeta(1+\beta+\gamma)\zeta(1+\alpha+u)\zeta(1+\beta+u)\zeta(1+u+v)^4}{\zeta(1+\alpha+v)^2\zeta(1+\beta+v)^2\zeta(1+\gamma+u)^2\zeta(1+2u)^2},\nonumber
\end{eqnarray}
where $B(\alpha,\beta,\gamma,u,v)$ is an arithmetical factor converging absolutely in a product of half-planes containing the origin. So
\begin{equation}\label{740}
I_{23}^1=\widehat{w}(0)\zeta(1+\alpha+\gamma)\zeta(1+\beta+\gamma)\sum_{i,j}\frac{b_ic_j i!j!}{(\log y_2)^{i}(\log y_3)^j}K_{i,j},
\end{equation}
where
\[
K_{i,j}=\Big(\frac{1}{2\pi i}\Big)^2\int_{(1)}\int_{(1)}y_2^{u}y_{3}^v\frac{B(\alpha,\beta,\gamma,u,v)\zeta(1+\alpha+u)\zeta(1+\beta+u)\zeta(1+u+v)^4}{\zeta(1+\alpha+v)^2\zeta(1+\beta+v)^2\zeta(1+\gamma+u)^2\zeta(1+2u)^2}\frac{du}{u^{i+1}}\frac{dv}{v^{j+1}}.
\]

Using the Dirichlet series for $\zeta(1+u+v)^4$ and changing the order of summation and integration, we have
\begin{eqnarray}\label{756}
K_{i,j}&=&\sum_{n\leq y_3}\frac{d_4(n)}{n}\Big(\frac{1}{2\pi i}\Big)^2\int_{(1)}\int_{(1)}\Big(\frac{y_2}{n}\Big)^{u}\Big(\frac{y_3}{n}\Big)^{v}\nonumber\\
&&\qquad\qquad\frac{B(\alpha,\beta,\gamma,u,v)\zeta(1+\alpha+u)\zeta(1+\beta+u)}{\zeta(1+\alpha+v)^2\zeta(1+\beta+v)^2\zeta(1+\gamma+u)^2\zeta(1+2u)^2}\frac{du}{u^{i+1}}\frac{dv}{v^{j+1}}.
\end{eqnarray}
Note that here we are able to restrict the sum over $n$ to $n\leq y_3$ by moving the $v$-integral far to the right. We now move the contours of integration to $\textrm{Re}(u)=\textrm{Re}(v)\asymp \mathcal{L}^{-1}$. Bounding the integrals trivially shows that $K_{i,j}\ll \mathcal{L}^{i+j-2}$. Hence from the Taylor series $B(\alpha,\beta,\gamma,u,v)=B(0,0,0,0,0)+O(\mathcal{L}^{-1})+O(|u|+|v|)$, we can replace $B(\alpha,\beta,\gamma,u,v)$ by $B(0,0,0,0,0)$ in $K_{i,j}$ with an error of size $O(\mathcal{L}^{i+j-3})$. By letting $\alpha=\beta=\gamma=u=v=s$ in \eqref{755}, it is easy to verify that $B(0,0,0,0,0)=1$. The $u$ and $v$ variables in \eqref{756} are now separated so that
\begin{equation}\label{J1}
K_{i,j}=\sum_{n\leq y_3}\frac{d_4(n)}{n}N_i(\alpha,\beta,\gamma)N_j(\alpha,\beta)+O(\mathcal{L}^{i+j-3}),
\end{equation}
where
\begin{equation*}
N_i(\alpha,\beta,\gamma)=\frac{1}{2\pi i}\int_{(\mathcal{L}^{-1})}\Big(\frac{y_2}{n}\Big)^{u}\frac{\zeta(1+\alpha+u)\zeta(1+\beta+u)}{\zeta(1+\gamma+u)^2\zeta(1+2u)^2}\frac{du}{u^{i+1}}
\end{equation*}
and
\begin{equation}\label{Nj}
N_j(\alpha,\beta)=\frac{1}{2\pi i}\int_{(\mathcal{L}^{-1})}\Big(\frac{y_3}{n}\Big)^{v}\frac{1}{\zeta(1+\alpha+v)^2\zeta(1+\beta+v)^2}\frac{dv}{v^{j+1}}.
\end{equation}

We evaluate $N_i(\alpha,\beta,\gamma)$ and $N_j(\alpha,\beta)$ with the following lemmas.

\begin{lemma}\label{L1}
Suppose $i\geq 3$. Then we have
\begin{eqnarray*}
N_i(\alpha,\beta,\gamma)&=&\frac{4}{(i-2)!(\log y_2)^2}\frac{d^2}{dx_1^2}\bigg[\int_{0}^{1}\int_{0}^{1} y_{2}^{\gamma x_1-\big(\alpha (1-t_2)+\beta t_2\big)t_1x_1}\Big(\frac{y_2}{n}\Big)^{-\big(\alpha (1-t_2)+\beta  t_2\big)t_1}\\
&&\qquad\qquad\qquad(1-t_1)^{i-2}t_1\Big(x_1\log y_2+(\log y_2/n)\Big)^{i}dt_1dt_2\bigg]_{x_1=0}+O(\mathcal{L}^{i-3}).
\end{eqnarray*}
\end{lemma}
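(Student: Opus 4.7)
The plan is to mimic the strategy of Lemma \ref{M2}: expand the zeta factors near $u=0$ to isolate the polar part of the integrand, pull out a $(\gamma+u)^2$ factor through an $x_1$-derivative trick, and then recognise the resulting contour integral by a double Feynman-parameter identity in $t_1$ and $t_2$.

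First I would insert the Laurent expansions at $u=0$: $\zeta(1+\alpha+u)=(\alpha+u)^{-1}(1+O(\alpha+u))$ and similarly for $\beta$; $\zeta(1+\gamma+u)^{-2}=(\gamma+u)^{2}(1+O(\gamma+u))$; and $\zeta(1+2u)^{-2}=4u^{2}(1+O(u))$. This rewrites the integrand of $N_i(\alpha,\beta,\gamma)$ as
\[
\Big(\frac{y_2}{n}\Big)^{u}\,\frac{4(\gamma+u)^{2}\,\widetilde h(u)}{(\alpha+u)(\beta+u)\,u^{i-1}},
\]
where $\widetilde h$ is analytic near the origin with $\widetilde h(0)=1$ and $\widetilde h(u)-1=O(\mathcal{L}^{-1})$ on the contour $\mathrm{Re}(u)\asymp\mathcal{L}^{-1}$. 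A trivial estimate yields $N_i\ll\mathcal{L}^{i-2}$, so replacing $\widetilde h$ by $1$ costs only $O(\mathcal{L}^{i-3})$. The contour shift to $\mathrm{Re}(u)=-c/\log Y$ with $Y\asymp\mathcal{L}$ is handled exactly as in Lemma \ref{M2}, and this is where the hypothesis $i\ge 3$ is used: the horizontal arcs contribute $O(Y^{-3+\varepsilon})$ only when the residual pole order $i-1$ is at least $2$.

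Next I would exploit $(\gamma+u)^{2}=(\log y_2)^{-2}\frac{d^{2}}{dx_1^{2}}\big[y_2^{(\gamma+u)x_1}\big]_{x_1=0}$ and set $A:=y_2^{1+x_1}/n$ to absorb $y_2^{u x_1}$ into $(y_2/n)^{u}$, thereby reducing (up to the admissible error) to
\[
J'=\frac{1}{2\pi i}\int_{(\mathcal{L}^{-1})}\frac{A^{u}\,du}{(\alpha+u)(\beta+u)\,u^{i-1}}.
\]
The crucial move is to combine the partial-fraction identity
\[
\frac{1}{(\alpha+u)(\beta+u)}=\int_{0}^{1}\frac{dt_2}{(s_0(t_2)+u)^{2}},\qquad s_0(t_2):=\alpha(1-t_2)+\beta t_2,
\]
with the standard Feynman-parameter identity
\[
\frac{1}{u^{i-1}(s_0+u)^{2}}=i(i-1)\int_{0}^{1}\frac{t^{i-2}(1-t)\,dt}{(u+(1-t)s_0)^{i+1}}.
\]
After swapping integrations the inner contour integral becomes $\frac{1}{2\pi i}\int A^{u}(u+(1-t)s_0)^{-(i+1)}du$, which equals $(\log A)^{i}A^{-(1-t)s_0}/i!$ by a single residue at $u=-(1-t)s_0$; this shift to the left is legitimate since $A>1$ in the relevant range $n\le y_3<y_2$. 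Substituting $t\mapsto 1-t_1$ gives
\[
J'=\frac{(\log A)^{i}}{(i-2)!}\int_{0}^{1}\!\!\int_{0}^{1}(1-t_1)^{i-2}\,t_1\,A^{-t_1 s_0(t_2)}\,dt_1\,dt_2.
\]

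Finally, reinstating $y_2^{\gamma x_1}$ and the operator $\frac{1}{(\log y_2)^{2}}\frac{d^{2}}{dx_1^{2}}\big|_{x_1=0}$, and then expanding $\log A=x_1\log y_2+\log(y_2/n)$ and $A^{-t_1 s_0}=y_2^{-t_1 s_0 x_1}(y_2/n)^{-t_1 s_0}$, turns the above into the claimed formula. I expect the only real technical obstacle to be the uniform bookkeeping of the $\widetilde h\mapsto 1$ error over $\alpha,\beta,\gamma\ll\mathcal{L}^{-1}$ and over $n$; this is routine but somewhat tedious, and proceeds exactly as the contour-shift argument in the proof of Lemma \ref{M2}.
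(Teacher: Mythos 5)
Your argument is correct and reaches exactly the paper's double-integral representation, but the central computation is organised differently. The paper, after the same two reductions you make (isolating the residue at $u=0$ with the zeta-factors replaced by the leading terms of their Laurent expansions, then extracting $(\gamma+u)^2$ via the $\frac{d^2}{dx_1^2}y_2^{(\gamma+u)x_1}\big|_{x_1=0}$ trick), evaluates $N(x_1)=\frac{1}{2\pi i}\oint A^u\,(\alpha+u)^{-1}(\beta+u)^{-1}u^{-(i-1)}du$ by expanding $A^u$ and $(1+\alpha u)^{-1}(1+\beta u)^{-1}$ in power series, picking out the surviving coefficient via the residue at infinity, and only then separating the indices $l_1,l_2,i$ with the beta-function integral $\frac{1}{(i+l_1+l_2)!}=\frac{1}{(i-2)!\,l_1!\,l_2!}\int_0^1\!\!\int_0^1(1-t_1)^{i-2}t_1^{l_1+l_2+1}(1-t_2)^{l_1}t_2^{l_2}\,dt_1dt_2$ before resumming the double series into the exponential $A^{-t_1 s_0(t_2)}$. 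You instead combine the denominators first, via the two Feynman-parameter identities, reducing everything to a single pole of order $i+1$ at $u=-(1-t)s_0(t_2)$ whose residue is read off at once; the substitution $t\mapsto 1-t_1$ then lands on the same integrand. The two routes are essentially dual (the beta integral is the Feynman parametrisation), but yours avoids the double resummation at the cost of having to justify the interchange of integrals and the contour shift past the moving pole $u=-(1-t)s_0$; since $s_0\ll\mathcal{L}^{-1}$, the contour has radius (or abscissa) a suitably large multiple of $\mathcal{L}^{-1}$, and the integrand decays like $|u|^{-(i+1)}$ with $i\geq3$, this is harmless, as you indicate. Your accounting of the preliminary error terms (the trivial bound $N_i\ll\mathcal{L}^{i-2}$, the $O(\mathcal{L}^{i-3})$ cost of discarding the analytic factor, and the role of $i\geq3$ in the tail of the contour) matches the paper's; note only that the paper disposes of the shifted-contour error more cheaply here than in Lemma \ref{M2}, using that $\log(y_2/n)\geq(\vartheta_2-\vartheta_3)\mathcal{L}$ for $n\leq y_3$, so no $(y_2/n)^{-\nu}$ term needs to be carried.
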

\begin{proof}
An argument on the level of the prime number theorem shows that $N_i(\alpha,\beta,\gamma)$ is equal to the residue at $u=0$ plus an error of size $(\log y_2/n)^{-B}$ for any $B>0$. Since $n\leq y_3$, we have $(\log y_2/n)\geq \log (y_2/y_3) = (\vartheta_2-\vartheta_3)\mathcal{L}$ so that this error is negligible. 

We write the residue at $u=0$ as
\begin{equation*}
\frac{1}{2 \pi i} \oint q^u \frac{\zeta(1+\alpha+u)\zeta(1+\beta+u)}{\zeta(1+\gamma+u)^2\zeta(1+2u)^2}  \frac{du}{u^{i+1}},
\end{equation*}
where $q=y_2/n$ and the contour is a circle of radius $\asymp \mathcal{L}^{-1}$ around the origin. This integral is trivially bounded by $O(\mathcal{L}^{i-2})$. Hence by taking the first terms in the Taylor series of the zeta-functions we have
\begin{eqnarray*}
N_i(\alpha,\beta,\gamma)=\frac{4}{2 \pi i} \oint q^u \frac{(\gamma+u)^2}{(\alpha+u)(\beta+u)} \frac{du}{u^{i-1}}+O(\mathcal{L}^{i-3}).
\end{eqnarray*}

We next use the identity
\begin{equation*}
 (\gamma+u)^2 =  \frac{1}{(\log y_2)^2}\frac{d^2}{dx_{1}^2} y_{2}^{(\gamma+u)x_{1}} \bigg|_{x_{1}=0},
\end{equation*}
to write
\begin{equation*}
N_i(\alpha,\beta,\gamma)=\frac{4}{(\log y_2)^2} \frac{d^2}{dx_{1}^2} y_{2}^{\gamma x_{1}} N(x_{1}) \bigg|_{x_{1}=0},
\end{equation*}
where 
\[
N(x_{1}) = \frac{1}{2 \pi i} \oint \big(y_2^{x_{1}}q\big)^u \frac{1}{(\alpha + u)(\beta+u)} \frac{du}{u^{i-1}}.
\]
Taking the power series gives
\begin{equation*}
N(x_{1}) = \sum_{k \geq 0} \frac{(x_{1}\log y_2 + \log q)^k}{k!} \frac{1}{2 \pi i} \oint \frac{u^{k-i+1}}{(\alpha + u)(\beta+u)} du.
\end{equation*}
As there are three poles inside the contour, it is slightly easier to compute the residue at infinity.  In other words, changing the variable $u \rightarrow 1/u$ yields
\begin{equation*}
N(x_{1}) = \sum_{k\geq 0} \frac{(x_{1}\log y_2 + \log q)^k}{k!} \frac{1}{2 \pi i} \oint \frac{u^{i-k-1}}{(1+\alpha u)(1+\beta u)} du.
\end{equation*}
In view of the power series of $(1+\alpha u)^{-1}$ and $(1+\beta u)^{-1}$, we have
\begin{equation*}
N(x_{1}) = \sum_{k \geq 0} \frac{(x_{1}\log y_2 + \log q)^k}{k!} \sum_{l_1,l_2 \geq 0} (-\alpha)^{l_1}(-\beta)^{l_2} \frac{1}{2 \pi i} \oint u^{i+l_1+l_2-k-1} du.
\end{equation*}
The integral picks out the terms $k = i+l_1+l_2$, giving
\begin{equation}\label{Beta}
N(x_{1}) = (x_{1}\log y_2 + \log q)^{i} \sum_{l_1,l_2 \geq 0} \frac{(-\alpha)^{l_1}(-\beta)^{l_2} (x_{1}\log y_2 + \log q)^{l_1+l_2}}{(i+l_1+l_2)!}.
\end{equation}
We now separate the variables $l_1,l_2$ and $i$ by using the standard beta function and its integral representation
\begin{eqnarray*}
\frac{1}{(i+l_1+l_2)!}& =& B(i-1,l_1+l_2+2) \frac{1}{(i-2)!(l_1+l_2+1)!}\\
&=&B(i-1,l_1+l_2+2)B(l_1+1,l_2+1)\frac{1}{(i-2)!l_1!l_2!}\\
&=&\frac{1}{(i-2)!l_1!l_2!}\int_{0}^{1}\int_{0}^{1}(1-t_{1})^{i-2}t_{1}^{l_1+l_2+1}(1-t_2)^{l_1}t_{2}^{l_2}dt_1dt_2.
\end{eqnarray*}
Putting this into \eqref{Beta} we obtain
\begin{equation*}
N(x_1) =\frac {(x_{1}\log y_2 + \log q)^{i}}{(i-2)!}\int_{0}^{1}\int_{0}^{1}(1-t_{1})^{i-2}t_{1}e^{-\alpha(x_{1}\log y_2+\log q)t_1(1-t_2)-\beta(x_{1}\log y_2+\log q)t_1t_2}dt_1dt_2,
\end{equation*}
and the lemma follows.
\end{proof}

\begin{lemma}\label{L2}
Suppose $j\geq 4$. Then for some $\nu\asymp (\log\log y_3)^{-1}$ we have
\begin{eqnarray*}
N_j(\alpha,\beta)&=&\frac{1}{j!(\log y_3)^4}\frac{d^4}{dx_2^2dx_3^2}\bigg[y_{3}^{\alpha x_2+\beta x_3}\Big((x_2+x_3)\log y_3+\log y_3/n\Big)^j\bigg]_{x_2=x_3=0}\\
&&\qquad\qquad+O(\mathcal{L}^{j-5})+O\bigg(\Big(\frac{y_3}{n}\Big)^{-\nu}\mathcal{L}^\varepsilon\bigg).
\end{eqnarray*}
\end{lemma}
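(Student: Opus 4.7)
The plan is to follow the method of Lemma \ref{M2}, adjusting for the fact that $N_j(\alpha,\beta)$ carries four $\zeta$-factors in the denominator rather than two. I would first apply Cauchy's theorem to deform the contour in \eqref{Nj} from $\textrm{Re}(v)=\mathcal{L}^{-1}$ past the pole of order $j+1$ at $v=0$ onto a path consisting of $\mathcal{C}_1=\{\textrm{Re}(v)=\mathcal{L}^{-1},\,|\textrm{Im}(v)|\geq Y\}$, horizontal connectors $\mathcal{C}_2=\{v=\sigma\pm iY,\,-c/\log Y\leq\sigma\leq\mathcal{L}^{-1}\}$, and a vertical edge $\mathcal{C}_3=\{\textrm{Re}(v)=-c/\log Y,\,|\textrm{Im}(v)|\leq Y\}$, where $Y=o(T)$ is to be chosen and $c$ is a fixed positive constant small enough that $\zeta(1+\alpha+v)\zeta(1+\beta+v)$ has no zeros to the right of the path and the bound $1/\zeta(\sigma+it)\ll\log(2+|t|)$ is valid there (cf.\ [\textbf{\ref{T}}; Theorem 3.11]).

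Using this bound, the integrand on the shifted contour is controlled by $(y_3/n)^{\textrm{Re}(v)}(\log(2+|\textrm{Im}(v)|))^4|v|^{-j-1}$. The hypothesis $j\geq 4$ ensures absolute convergence: the tails $\mathcal{C}_1$ contribute $\ll_\varepsilon Y^{-j+\varepsilon}$, the horizontal pieces $\mathcal{C}_2$ contribute $\ll_\varepsilon Y^{-j-1+\varepsilon}$, and the left edge $\mathcal{C}_3$ contributes $\ll(y_3/n)^{-c/\log Y}(\log Y)^{O(1)}$, where the implied constants depend on the bounded degree of $P_3$. Choosing $Y\asymp\mathcal{L}$ converts the first two into $O_\varepsilon(\mathcal{L}^{-4+\varepsilon})$, absorbed into $O(\mathcal{L}^{j-5})$ since $j\geq 4$, and the third into the claimed $O_\varepsilon((y_3/n)^{-\nu}\mathcal{L}^\varepsilon)$ with $\nu\asymp(\log\log y_3)^{-1}$.

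It then remains to extract the residue at $v=0$, which I would express as a small-circle integral of radius $\asymp\mathcal{L}^{-1}$ around the origin. Inside this disc, the Taylor expansion $1/\zeta(1+s)^2=s^2+O(|s|^3)$ together with $\alpha,\beta\ll\mathcal{L}^{-1}$ gives
\[
\frac{1}{\zeta(1+\alpha+v)^2\zeta(1+\beta+v)^2}=(\alpha+v)^2(\beta+v)^2+O(\mathcal{L}^{-5}),
\]
and the $O$-term contributes $O(\mathcal{L}^{j-5})$ after division by $v^{j+1}$ and integration around the circle. For the leading part, the identities
\[
(\alpha+v)^2=\frac{1}{(\log y_3)^2}\frac{d^2}{dx_2^2}y_3^{(\alpha+v)x_2}\Big|_{x_2=0},\qquad(\beta+v)^2=\frac{1}{(\log y_3)^2}\frac{d^2}{dx_3^2}y_3^{(\beta+v)x_3}\Big|_{x_3=0},
\]
allow me to pull $\frac{d^4}{dx_2^2dx_3^2}$ outside the contour integral, reducing the inner residue to $\frac{1}{2\pi i}\oint e^{vA}\frac{dv}{v^{j+1}}=\frac{A^j}{j!}$ with $A=(x_2+x_3)\log y_3+\log(y_3/n)$. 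This yields exactly the main term stated in the lemma.

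The most delicate step I expect is the $\mathcal{C}_3$ bound: four reciprocal zetas pile up extra factors of $\log Y$, and one must verify these fit inside the $\mathcal{L}^\varepsilon$ in $O_\varepsilon((y_3/n)^{-\nu}\mathcal{L}^\varepsilon)$ after choosing $Y\asymp\mathcal{L}$. The remaining work is routine bookkeeping modelled directly on the proof of Lemma \ref{M2}.
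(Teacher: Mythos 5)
Your argument is correct and is essentially the paper's own proof: the same three-piece contour deformation $\mathcal{C}_1,\mathcal{C}_2,\mathcal{C}_3$ with $Y\asymp\mathcal{L}$ borrowed from Lemma \ref{M2}, followed by extraction of the residue at $v=0$ on a circle of radius $\asymp\mathcal{L}^{-1}$, the Taylor replacement of $1/\zeta(1+\alpha+v)^2\zeta(1+\beta+v)^2$ by $(\alpha+v)^2(\beta+v)^2$ at cost $O(\mathcal{L}^{j-5})$, and the rewriting of the main term via the $\frac{d^4}{dx_2^2dx_3^2}$ identities. No substantive differences; your error bookkeeping on each contour piece matches what the paper leaves implicit.
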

\begin{proof}
Similarly to Lemma \ref{M2}, $N_j(\alpha,\beta)$ equals the residue at $v=0$ plus an error of size $O_\varepsilon\big((y_3/n)^{-\nu} \mathcal{L}^{\varepsilon}\big) + O_\varepsilon(\mathcal{L}^{-2+\varepsilon})$.

For the residue at $v=0$, we write this as
\begin{equation*}
\frac{1}{2 \pi i} \oint \Big(\frac{y_3}{n}\Big)^v \frac{1}{\zeta(1+\alpha+v)^2\zeta(1+\beta+v)^2}  \frac{dv}{v^{j+1}},
\end{equation*}
where the contour is a circle of radius $\asymp \mathcal{L}^{-1}$ around the origin. This integral is trivially bounded by $O(\mathcal{L}^{j-4})$. Hence by taking the first terms in the Taylor series of the zeta-functions we have
\[
\textrm{Res}_{v=0}=\frac{1}{2 \pi i} \oint \Big(\frac{y_3}{n}\Big)^v (\alpha+v)^2(\beta+v)^2 \frac{dv}{v^{j+1}}+O(\mathcal{L}^{j-5}).
\]
The above integral can be written in a compact form as
\[
\frac{1}{j!(\log y_3)^4}\frac{d^4}{dx_2^2dx_3^2}\bigg[y_{3}^{\alpha x_2+\beta x_3}\Big((x_2+x_3)\log y_3+\log y_3/n\Big)^j\bigg]_{x_2=x_3=0},
\]
and the lemma follows.
\end{proof}

In view of \eqref{J1} and Lemmas \ref{L1} and \ref{L2} we get
\begin{eqnarray*}
&&K_{i,j}=\frac{4(\log y_2)^{i-2}(\log y_3)^{j-4}}{(i-2)!j!}\frac{d^6}{dx_1^2dx_2^2dx_3^2}\bigg[\int_{0}^{1}\int_{0}^{1}\\
&&\qquad\Big(\frac{y_2}{y_3}\Big)^{-\big(\alpha (1-t_2)+\beta t_2\big)t_1}y_{2}^{\gamma x_1-\big(\alpha (1-t_2)+\beta t_2\big)t_1x_1}y_{3}^{\alpha x_2+\beta x_3}(1-t_1)^{i-2}t_1\\
&&\qquad\qquad\sum_{n\leq y_3}\frac{d_4(n)}{n}\Big(\frac{y_3}{n}\Big)^{-\big(\alpha (1-t_2)+\beta t_2\big)t_1}\Big(x_1+\frac{\log y_2/n}{\log y_2}\Big)^{i}\Big(x_2+x_3+\frac{\log y_3/n}{\log y_3}\Big)^jdt_1dt_2\bigg]_{\underline{x}=0}\\
&&\qquad\qquad\qquad+O_\varepsilon\bigg(\mathcal{L}^{i-2+\varepsilon}\sum_{n\leq y_3}\frac{d_4(n)}{n}\Big(\frac{y_3}{n}\Big)^{-\nu}\bigg)+O(\mathcal{L}^{i+j-3}).
\end{eqnarray*}
Using Lemmas \ref{600} and \ref{601}, the $O$-terms are $\ll_\varepsilon \mathcal{L}^{i+1+\varepsilon}+\mathcal{L}^{i+j-3}$, while the first term is
\begin{eqnarray*}
&&\frac{2(\log y_2)^{i-2}(\log y_3)^j}{3(i-2)!j!}\frac{d^6}{dx_1^2dx_2^2dx_3^2}\bigg[\int_{0}^{1}\int_{0}^{1}\int_{0}^{1}\\
&&\qquad y_{2}^{\gamma x_1-\big(\alpha (1-t_2)+\beta t_2\big)t_1(1+x_1)}y_{3}^{\alpha x_2+\beta x_3+\big(\alpha (1-t_2)+\beta t_2\big)t_1(1-u)}(1-t_1)^{i-2}t_1(1-u)^3\\
&&\qquad\qquad\Big(x_1+1-\tfrac{\vartheta_3}{\vartheta_2}(1-u)\Big)^{i}(x_2+x_3+u)^jdt_1dt_2du\bigg]_{\underline{x}=0}+O(\mathcal{L}^{i+j-3}).
\end{eqnarray*}
Putting this back to \eqref{740} and using the assumption that $j\geq4$ we get
\begin{eqnarray*}
&&I_{23}^1=\frac{2\widehat{w}(0)}{3(\log y_2)^2}\zeta(1+\alpha+\gamma)\zeta(1+\beta+\gamma)\frac{d^6}{dx_1^2dx_2^2dx_3^2}\bigg[\int_{0}^{1}\int_{0}^{1}\int_{0}^{1}\\
&&\qquad y_{2}^{\gamma x_1-\big(\alpha (1-t_2)+\beta t_2\big)t_1(1+x_1)}y_{3}^{\alpha x_2+\beta x_3+\big(\alpha (1-t_2)+\beta t_2\big)t_1(1-u)}t_1\Big(x_1+1-\tfrac{\vartheta_3}{\vartheta_2}(1-u)\Big)^2(1-u)^3\\
&&\qquad\qquad P_2''\Big(\big(x_1+1-\tfrac{\vartheta_3}{\vartheta_2}(1-u)\big)(1-t_1)\Big)P_3(x_2+x_3+u)dt_1dt_2du\bigg]_{\underline{x}=0}+O_\varepsilon(\mathcal{L}^{-1+\varepsilon}).
\end{eqnarray*}

\subsection{Deduction of Lemma \ref{lemmac23}}

Combining $I_{23}^1$, $I_{23}^2$ and $I_{23}^3$ we have
\begin{eqnarray*}
&&I_{23}=\frac{2\widehat{w}(0)}{3(\log y_2)^2}\frac{d^6}{dx_1^2dx_2^2dx_3^2}\bigg[\int_{0}^{1}\int_{0}^{1}\int_{0}^{1}U_2(\underline{x})t_1\Big(x_1+1-\tfrac{\vartheta_3}{\vartheta_2}(1-u)\Big)^2(1-u)^3\nonumber\\
&&\qquad P_2''\Big(\big(x_1+1-\tfrac{\vartheta_3}{\vartheta_2}(1-u)\big)(1-t_1)\Big)P_3(x_2+x_3+u)dt_1dt_2du\bigg]_{\underline{x}=0}+O_\varepsilon(T\mathcal{L}^{-1+\varepsilon}),
\end{eqnarray*}
where
\begin{eqnarray*}
U_2&=&\frac{y_{2}^{\gamma x_1-\big(\alpha (1-t_2)+\beta t_2\big)t_1(1+x_1)}y_{3}^{\alpha x_2+\beta x_3+\big(\alpha (1-t_2)+\beta t_2\big)t_1(1-u)}}{(\alpha+\gamma)(\beta+\gamma)}\\
&&\qquad-\frac{T^{-(\alpha+\gamma)}y_{2}^{-\alpha x_1-\big(-\gamma (1-t_2)+\beta t_2\big)t_1(1+x_1)}y_{3}^{-\gamma x_2+\beta x_3+\big(-\gamma (1-t_2)+\beta t_2\big)t_1(1-u)}}{(\alpha+\gamma)(\beta-\alpha)}\\
&&\qquad\qquad-\frac{T^{-(\beta+\gamma)}y_{2}^{-\beta x_1-\big(\alpha (1-t_2)-\gamma t_2\big)t_1(1+x_1)}y_{3}^{\alpha x_2-\gamma x_3+\big(\alpha (1-t_2)-\gamma t_2\big)t_1(1-u)}}{(\alpha-\beta)(\beta+\gamma)}.
\end{eqnarray*}

As in the previous section we can write
\begin{eqnarray*}
U_2&=&\frac{y_{2}^{\gamma x_1-\big(\alpha (1-t_2)+\beta t_2\big)t_1(1+x_1)}y_{3}^{\alpha x_2+\beta x_3+\big(\alpha (1-t_2)+\beta t_2\big)t_1(1-u)}}{\beta-\alpha}\\
&&\qquad\qquad\Big(\frac{1-\big(Ty_{2}^{x_1-t_1(1-t_2)(1+x_1)}y_{3}^{x_2+t_1(1-t_2)(1-u)}\big)^{-(\alpha+\gamma)}}{\alpha+\gamma}\Big)\\
&&\qquad-\frac{y_{2}^{\gamma x_1-\big(\alpha (1-t_2)+\beta t_2\big)t_1(1+x_1)}y_{3}^{\alpha x_2+\beta x_3+\big(\alpha (1-t_2)+\beta t_2\big)t_1(1-u)}}{\beta-\alpha}\\
&&\qquad\qquad\qquad\Big(\frac{1-\big(Ty_{2}^{x_1-t_1t_2(1+x_1)}y_{3}^{x_3+t_1t_2(1-u)}\big)^{-(\beta+\gamma)}}{\beta+\gamma}\Big).
\end{eqnarray*}
Applying \eqref{int} leads to
\begin{eqnarray*}
U_2&=&\frac{\mathcal{L}}{\beta-\alpha}\Big(1+\vartheta_2x_1+\vartheta_3x_2-t_1(1-t_2)\big(\vartheta_2(1+x_1)-\vartheta_3(1-u)\big)\Big)\\
&&\qquad y_{2}^{\gamma x_1-\big(\alpha (1-t_2)+\beta t_2\big)t_1(1+x_1)}y_{3}^{\alpha x_2+\beta x_3+\big(\alpha (1-t_2)+\beta t_2\big)t_1(1-u)}\\
&&\qquad\qquad\int_{0}^{1}\big(Ty_{2}^{x_1-t_1(1-t_2)(1+x_1)}y_{3}^{x_2+t_1(1-t_2)(1-u)}\big)^{-(\alpha+\gamma)t_3}dt_3\\
&&\qquad-\frac{\mathcal{L}}{\beta-\alpha}\Big(1+\vartheta_2x_1+\vartheta_3x_3-t_1t_2\big(\vartheta_2(1+x_1)-\vartheta_3(1-u)\big)\Big)\\
&&\qquad\qquad y_{2}^{\gamma x_1-\big(\alpha (1-t_2)+\beta t_2\big)t_1(1+x_1)}y_{3}^{\alpha x_2+\beta x_3+\big(\alpha (1-t_2)+\beta t_2\big)t_1(1-u)}\\
&&\qquad\qquad\qquad \int_{0}^{1}\big(Ty_{2}^{x_1-t_1t_2(1+x_1)}y_{3}^{x_3+t_1t_2(1-u)}\big)^{-(\beta+\gamma)t_3}dt_3.
\end{eqnarray*}
Changing $t_2\rightarrow 1-t_2$ in the first term, and changing the roles of the variables $x_2$ with $x_3$ in the second term yields
\begin{eqnarray*}
I_{23}&=&\frac{2\widehat{w}(0)\mathcal{L}}{3(\log y_2)^2}\frac{d^6}{dx_1^2dx_2^2dx_3^2}\bigg[\mathop{\int}_{[0,1]^4}V_2(\underline{x})\Big(1+\vartheta_2x_1+\vartheta_3x_2-t_1t_2\big(\vartheta_2(1+x_1)-\vartheta_3(1-u)\big)\Big)\\
&&\qquad t_1\Big(x_1+1-\tfrac{\vartheta_3}{\vartheta_2}(1-u)\Big)^2(1-u)^3P_2''\Big(\big(x_1+1-\tfrac{\vartheta_3}{\vartheta_2}(1-u)\big)(1-t_1)\Big)\\
&&\qquad\qquad P_3\big(x_2+x_3+u\big)dt_1dt_2dt_3du\bigg]_{\underline{x}=0}+O_\varepsilon(T\mathcal{L}^{-1+\varepsilon}),
\end{eqnarray*}
where
\begin{eqnarray*}
V_2&=&\frac{1}{\beta-\alpha}\Big(y_{2}^{\gamma x_1-\big(\alpha t_2+\beta  (1-t_2)\big)t_1(1+x_1)}y_{3}^{\alpha x_2+\beta x_3+\big(\alpha t_2+\beta  (1-t_2)\big)t_1(1-u)}\\
&&\qquad\qquad\qquad\qquad\qquad\qquad\big(Ty_{2}^{x_1-t_1t_2(1+x_1)}y_{3}^{x_2+t_1t_2(1-u)}\big)^{-(\alpha+\gamma)t_3}\\
&&\qquad\qquad\qquad -y_{2}^{\gamma x_1-\big(\alpha (1-t_2)+\beta t_2\big)t_1(1+x_1)}y_{3}^{\alpha x_3+\beta x_2+\big(\alpha (1-t_2)+\beta t_2\big)t_1(1-u)}\\
&&\qquad\qquad\qquad\qquad\qquad\qquad\qquad\qquad\qquad \big(Ty_{2}^{x_1-t_1t_2(1+x_1)}y_{3}^{x_2+t_1t_2(1-u)}\big)^{-(\beta+\gamma)t_3}\Big).
\end{eqnarray*}
Using \eqref{int} again implies that $V_2$ equals
\begin{eqnarray*}
&&y_{2}^{\gamma x_1-\big(\alpha t_2+\beta  (1-t_2)\big)t_1(1+x_1)}y_{3}^{\alpha x_2+\beta x_3+\big(\alpha t_2+\beta  (1-t_2)\big)t_1(1-u)}\big(Ty_{2}^{x_1-t_1t_2(1+x_1)}y_{3}^{x_2+t_1t_2(1-u)}\big)^{-(\alpha+\gamma)t_3}\\
&&\qquad \bigg(\frac{1 -\Big(y_{2}^{t_1(2t_2-1)(1+x_1)}y_{3}^{-x_2+ x_3-t_1(2 t_2-1)(1-u)}\big(Ty_{2}^{x_1-t_1t_2(1+x_1)}y_{3}^{x_2+t_1t_2(1-u)}\big)^{t_3}\Big)^{-(\beta-\alpha)}}{\beta-\alpha}\bigg)\\
&=&\mathcal{L}\bigg(-\vartheta_3\big(x_2-x_3)+t_1(2t_2-1)\big(\vartheta_2(1+x_1)-\vartheta_3(1-u)\big)\\
&&\qquad\qquad\qquad\qquad\qquad\qquad+t_3\Big(1+\vartheta_2x_1+\vartheta_3x_2-t_1t_2\big(\vartheta_2(1+x_1)-\vartheta_3(1-u)\big)\Big)\bigg)\\
&&\ y_{2}^{\gamma x_1-\big(\alpha t_2+\beta  (1-t_2)\big)t_1(1+x_1)}y_{3}^{\alpha x_2+\beta x_3+\big(\alpha t_2+\beta  (1-t_2)\big)t_1(1-u)}\big(Ty_{2}^{x_1-t_1t_2(1+x_1)}y_{3}^{x_2+t_1t_2(1-u)}\big)^{-(\alpha+\gamma)t_3}\\
&&\ \ \int_{0}^{1}\Big(y_{2}^{t_1(2t_2-1)(1+x_1)}y_{3}^{-x_2+ x_3-t_1(2 t_2-1)(1-u)}\big(Ty_{2}^{x_1-t_1t_2(1+x_1)}y_{3}^{x_2+t_1t_2(1-u)}\big)^{t_3}\Big)^{-(\beta-\alpha)t_4}dt_4,
\end{eqnarray*}
and the lemma follows.

\section{Proof of Lemma \ref{lemmac3}}\label{section:c3}

\subsection{Reduction to a contour integral}

We first state the twisted fourth moment of the Riemann zeta-function [\textbf{\ref{BBLR}}]. 

\begin{theorem}[Bettin, Bui, Li and Radziwi\l\l]\label{BB}
Suppose $H,K\leq T^{1/4-\varepsilon}$. Then we have
\begin{eqnarray*}
&&\!\!\!\!\!\!\!\!\!\!\sum_{\substack{h\leq H\\k\leq K}}\frac{a_h\overline{a_k}}{\sqrt{hk}}\int_{-\infty}^{\infty}\zeta(\tfrac{1}{2}+\alpha+it)\zeta(\tfrac{1}{2}+\beta+it)\zeta(\tfrac{1}{2}+\gamma-it)\zeta(\tfrac{1}{2}+\delta-it)\Big(\frac{h}{k}\Big)^{it}w(t)dt\\
&&\!\!\!\!\!\!\!\!\!\!\qquad=\sum_{\substack{h\leq H\\k\leq K}}\frac{a_h\overline{a_k}}{\sqrt{hk}}\int_{-\infty}^{\infty}w(t)\bigg\{Z_{\alpha,\beta,\gamma,\delta}(h,k)+\Big(\frac{t}{2\pi}\Big)^{-(\alpha+\gamma)}Z_{-\gamma,\beta,-\alpha,\delta}(h,k)\\
&&\!\!\!\!\!\!\!\!\!\!\qquad\qquad+\Big(\frac{t}{2\pi}\Big)^{-(\alpha+\delta)}Z_{-\delta,\beta,\gamma,-\alpha}(h,k)+\Big(\frac{t}{2\pi}\Big)^{-(\beta+\gamma)}Z_{\alpha,-\gamma,-\beta,\delta}(h,k)\\
&&\!\!\!\!\!\!\!\!\!\!\qquad\qquad+\Big(\frac{t}{2\pi}\Big)^{-(\beta+\delta)}Z_{\alpha,-\delta,\gamma,-\beta}(h,k)+\Big(\frac{t}{2\pi}\Big)^{-(\alpha+\beta+\gamma+\delta)}Z_{-\gamma,-\delta,-\alpha,-\beta}(h,k)\bigg\}dt+O_\varepsilon(T^{1-\varepsilon})
\end{eqnarray*}
uniformly for $\alpha,\beta,\gamma,\delta\ll \mathcal{\mathcal{L}}^{-1}$, where 
\[
Z_{\alpha,\beta,\gamma,\delta}(h,k)=\sum_{kab=hcd}\frac{1}{a^{1/2+\alpha}b^{1/2+\beta}c^{1/2+\gamma}d^{1/2+\delta}}.
\]
\end{theorem}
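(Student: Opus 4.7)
The plan is to establish Theorem~\ref{BB} by the standard three-step route used for twisted higher moments: approximate functional equation, diagonal identification, and spectral treatment of the off-diagonal terms. First I would apply an approximate functional equation to the product $\zeta(\tfrac{1}{2}+\alpha+it)\zeta(\tfrac{1}{2}+\beta+it)\zeta(\tfrac{1}{2}+\gamma-it)\zeta(\tfrac{1}{2}+\delta-it)$. Splitting the two $+it$ zetas from the two $-it$ zetas and exploiting the $\chi$-factor from the functional equation yields $\binom{4}{2}=6$ terms, one "straight" and five obtained by swapping an $(\alpha,\gamma)$, $(\alpha,\delta)$, $(\beta,\gamma)$, $(\beta,\delta)$ or $(\alpha+\beta,\gamma+\delta)$ pair, each decorated by the corresponding $(t/2\pi)^{-(\cdot)}$ archimedean factor. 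These six shapes are exactly the six terms appearing in the statement.

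Second, for each of the six pieces I would multiply by $a_h\overline{a_k}(h/k)^{it}/\sqrt{hk}$ and integrate against $w(t)$. Writing the resulting expression as a sum over $kab\sim hcd$ (with smooth $t$-weight giving a decaying tail for $|kab-hcd|$ large), the strictly diagonal contribution $kab=hcd$ assembles into the arithmetic sum $Z_{\alpha,\beta,\gamma,\delta}(h,k)$ through its Euler product, and produces the main term written in the statement. The remaining pieces likewise produce the five shuffled $Z$-terms.

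Third—and this is the crux—I would handle the off-diagonal $kab-hcd\neq 0$ by opening the condition with the Duke-Friedlander-Iwaniec $\delta$-symbol, producing sums of the shape
\[
\sum_{c}\frac{1}{c}\sum_{m,n}\cdots e\!\left(\frac{\overline{(kb)}\,hd}{c}\right)\phi\!\left(\frac{kab-hcd}{cM}\right).
\]
Opening the Kloosterman sums and applying Kuznetsov's formula transfers the problem to sums over the spectrum of $\mathrm{SL}_2(\mathbb{Z})$. The Eisenstein contribution (its polar parts) recovers the small-scale residual mass that the strict diagonals missed, while the discrete spectrum, combined with the Kim-Sarnak bound, yields an error of size $O_\varepsilon(T^{1-\varepsilon})$. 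The ranges $H,K\leq T^{1/4-\varepsilon}$ are precisely the sizes at which the resulting Kloosterman sums can be summed with a power-saving error, and this is where that hypothesis is used.

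The main obstacle will be Step 3: the off-diagonal analysis, and in particular maintaining uniformity in the four complex shifts $\alpha,\beta,\gamma,\delta\ll\mathcal{L}^{-1}$ throughout the Kuznetsov transfer. The shifts permeate the test functions entering the Bessel transforms, so the asymptotic expansions of these transforms must be tracked to produce holomorphic dependence on the shifts, and the polar contributions of the continuous spectrum must be matched \emph{exactly} to the five "shuffled" $Z$-terms predicted by the CFKRS heuristic. Getting these residue matchings right, with no stray lower-order main terms unaccounted for, is the delicate part; everything else is technical but standard.
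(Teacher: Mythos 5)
This theorem is not proved in the paper at all: it is imported verbatim from the Bettin--Bui--Li--Radziwi\l\l\ preprint [\textbf{\ref{BBLR}}], so there is no internal proof to compare your argument against. Judged on its own terms, your sketch has the right global shape --- approximate functional equation giving the six swap terms, diagonal $kab=hcd$ assembling into $Z_{\alpha,\beta,\gamma,\delta}(h,k)$, off-diagonal treated by the $\delta$-method and spectral theory --- and this is broadly the architecture of the actual BBLR proof.

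However, your Step 3 contains a genuine gap rather than a deferred technicality. The claim that opening the off-diagonal with the $\delta$-symbol, applying Kuznetsov and invoking Kim--Sarnak ``yields an error of size $O_\varepsilon(T^{1-\varepsilon})$'' for $H,K\leq T^{1/4-\varepsilon}$ is precisely the content of the theorem, not a routine consequence of known machinery. The direct spectral treatment of the twisted fourth moment (Hughes--Young) only reaches twists of length about $T^{1/11}$; extending the admissible range to $T^{1/4-\varepsilon}$ is exactly what required the new input of BBLR, namely power-saving asymptotics for shifted convolutions of the type $\sum_{n}\sigma_{\alpha,\beta}(n)\sigma_{\gamma,\delta}(n+r)$ that are \emph{uniform} in the shift $r$ up to essentially the length of the sum, together with a careful recombination of the resulting secondary main terms into the five shuffled $Z$-terms. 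Your proposal asserts the conclusion of that analysis (including the exact matching of the Eisenstein polar contributions to the CFKRS main terms) without supplying the uniformity in $r$ or the bookkeeping that makes the error genuinely $O_\varepsilon(T^{1-\varepsilon})$ in this range. As written, Step 3 restates the problem rather than solving it, so the proposal cannot be accepted as a proof of the theorem.
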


Recall that $I_{3}(\alpha, \beta,\gamma,\delta)$ is defined by \eqref{I3}. We write
\[
I_{3}(\alpha, \beta,\gamma,\delta)=I_{3}^{1}+I_{3}^2+I_3^3+I_3^4+I_3^5+I_3^6+O_\varepsilon(T^{1-\varepsilon})
\]
correspondingly to the decomposition in Theorem \ref{BB}. We first work on $I_3^1$, which is equal to
\begin{eqnarray*}
\widehat{w}(0)\sum_{l,m}\frac{\mu_2(m)\mu_2(l)P_3\big(\frac{\log y_3/m}{\log y_3}\big) P_3\big(\frac{\log y_3/l}{\log y_3}\big) }{\sqrt{lm}} \sum_{mab=lcd}\frac{1}{a^{1/2+\alpha}b^{1/2+\beta}c^{1/2+\delta}d^{1/2+\gamma}}.
\end{eqnarray*}
In view of \eqref{Mellin} we get
\begin{eqnarray*}
I_3^1&=&\widehat{w}(0)\sum_{i,j}\frac{c_ic_j i!j!}{(\log y_3)^{i+j}}\Big(\frac{1}{2\pi i}\Big)^2\int_{(1)}\int_{(1)}y_3^{u+v}\\
&&\qquad\qquad\sum_{mab=lcd}\frac{\mu_2(m)\mu_2(l)}{m^{1/2+u}l^{1/2+v}a^{1/2+\alpha}b^{1/2+\beta}c^{1/2+\gamma}d^{1/2+\delta}} \frac{du}{u^{i+1}}\frac{dv}{v^{j+1}}.
\end{eqnarray*}
The arithmetical sum is
\begin{eqnarray}\label{I355}
&&\sum_{mab=lcd}\frac{\mu_2(m)\mu_2(l)}{m^{1/2+u}l^{1/2+v}a^{1/2+\alpha}b^{1/2+\beta}c^{1/2+\gamma}d^{1/2+\delta}}\\
&&\qquad=C(\alpha,\beta,\gamma,\delta,u,v)\frac{\zeta(1+\alpha+\gamma)\zeta(1+\alpha+\delta)\zeta(1+\beta+\gamma)\zeta(1+\beta+\delta)\zeta(1+u+v)^4}{\zeta(1+\alpha+v)^2\zeta(1+\beta+v)^2\zeta(1+\gamma+u)^2\zeta(1+\delta+u)^2},\nonumber
\end{eqnarray}
where $C(\alpha,\beta,\gamma,\delta,u,v)$ is an arithmetical factor converging absolutely in a product of half-planes containing the origin. Hence
\begin{equation}\label{I340}
I_3^1=\widehat{w}(0)\zeta(1+\alpha+\gamma)\zeta(1+\alpha+\delta)\zeta(1+\beta+\gamma)\zeta(1+\beta+\delta)\sum_{i,j}\frac{c_ic_j i!j!}{(\log y_3)^{i+j}}L_{i,j},
\end{equation}
where
\begin{eqnarray*}
L_{i,j}&=&\Big(\frac{1}{2\pi i}\Big)^2\int_{(1)}\int_{(1)}y_3^{u+v}\frac{C(\alpha,\beta,\gamma,\delta,u,v)\zeta(1+u+v)^4}{\zeta(1+\alpha+v)^2\zeta(1+\beta+v)^2\zeta(1+\gamma+u)^2\zeta(1+\delta+u)^2}\frac{du}{u^{i+1}}\frac{dv}{v^{j+1}}.
\end{eqnarray*}

Using the Dirichlet series for $\zeta(1+u+v)^4$ and reversing the order of summation and integration, we have
\begin{eqnarray}\label{I356}
L_{i,j}&=&\sum_{n\leq y_3}\frac{d_4(n)}{n}\Big(\frac{1}{2\pi i}\Big)^2\int_{(1)}\int_{(1)}\Big(\frac{y_3}{n}\Big)^{u+v}\nonumber\\
&&\qquad\qquad\frac{C(\alpha,\beta,\gamma,\delta,u,v)}{\zeta(1+\alpha+v)^2\zeta(1+\beta+v)^2\zeta(1+\gamma+u)^2\zeta(1+\delta+u)^2}\frac{du}{u^{i+1}}\frac{dv}{v^{j+1}}.
\end{eqnarray}
Here we are able to restrict the sum over $n$ to $n\leq y_3$ by moving the $u,v$-integrals far to the right. We now move the contours of integration to $\textrm{Re}(u)=\textrm{Re}(v)\asymp \mathcal{L}^{-1}$. Bounding the integrals trivially shows that $L_{i,j}\ll \mathcal{L}^{i+j-4}$. As before we can replace $C(\alpha,\beta,\gamma,\delta,u,v)$ by $C(0,0,0,0,0,0)$ in $L_{i,j}$ with an error of size $O(\mathcal{L}^{i+j-5})$. By letting $\alpha=\beta=\gamma=\delta=u=v=s$ in \eqref{I355}, it is easy to verify that $C(0,0,0,0,0,0)=1$. The $u$ and $v$ variables in \eqref{I356} are now separated so that
\begin{equation*}
L_{i,j}=\sum_{n\leq y_3}\frac{d_4(n)}{n}N_i(\gamma,\delta)N_j(\alpha,\beta)+O(\mathcal{L}^{i+j-5}),
\end{equation*}
where the function $N_j(\alpha,\beta)$ is defined in \eqref{Nj}. Using Lemma \ref{L2} we obtain
\begin{eqnarray*}
L_{i,j}&=&\frac{(\log y_3)^{i+j-8}}{i!j!}\frac{d^8}{dx_1^2dx_2^2dx_3^2dx_4^2}\bigg[y_{3}^{\alpha x_1+\beta x_2+\gamma x_3+\delta x_4}\\
&&\qquad\qquad\sum_{n\leq y_3}\frac{d_4(n)}{n}\Big(x_3+x_4+\frac{\log y_3/n}{\log y_3}\Big)^{i}\Big(x_1+x_2+\frac{\log y_3/n}{\log y_3}\Big)^j\bigg]_{\underline{x}=0}\\
&&\qquad+O_\varepsilon\bigg(\big(\mathcal{L}^{i-4+\varepsilon}+\mathcal{L}^{j-4+\varepsilon}\big)\sum_{n\leq y_3}\frac{d_4(n)}{n}\Big(\frac{y_3}{n}\Big)^{-\nu}\bigg)+O(\mathcal{L}^{i+j-5}).
\end{eqnarray*}
In view of Lemmas \ref{600} and \ref{601}, the $O$-terms are $\ll_\varepsilon \mathcal{L}^{i-1+\varepsilon}+\mathcal{L}^{j-1+\varepsilon}+\mathcal{L}^{i+j-5}$, while the first term is
\begin{eqnarray*}
&&\frac{(\log y_3)^{i+j-4}}{6i!j!}\frac{d^8}{dx_1^2dx_2^2dx_3^2dx_4^2}\bigg[\int_{0}^{1}y_{3}^{\alpha x_1+\beta x_2+\gamma x_3+\delta x_4}(1-u)^3(x_3+x_4+u)^i(x_1+x_2+u)^jdu\bigg]_{\underline{x}=0}\\
&&\qquad\qquad+O(\mathcal{L}^{i+j-5}).
\end{eqnarray*}
Combining this with \eqref{I340}, and using the assumption $i,j\geq4$, we get
\begin{eqnarray*}
I_{3}^1&=&\frac{\widehat{w}(0)}{6(\log y_3)^4}\zeta(1+\alpha+\gamma)\zeta(1+\alpha+\delta)\zeta(1+\beta+\gamma)\zeta(1+\beta+\delta)\frac{d^8}{dx_1^2dx_2^2dx_3^2dx_4^2}\bigg[\int_{0}^{1}\\
&&\qquad y_{3}^{\alpha x_1+\beta x_2+\gamma x_3+\delta x_4}(1-u)^3P_3(x_1+x_2+u)P_3(x_3+x_4+u)du\bigg]_{\underline{x}=0}+O_\varepsilon(\mathcal{L}^{-1+\varepsilon}).
\end{eqnarray*}

\subsection{Deduction of Lemma \ref{lemmac3}}

Recall that $I_3^2$ is essentially obtained by multiplying $I_3^1$ with $T^{-(\alpha+\gamma)}$ and changing the shifts $\alpha\leftrightarrow-\gamma$, $I_{3}^3$ is obtained by multiplying $I_3^1$ with $T^{-(\alpha+\delta)}$ and changing the shifts $\alpha\leftrightarrow-\delta$, $I_3^4$ is obtained by multiplying $I_3^1$ with $T^{-(\beta+\gamma)}$ and changing the shifts $\beta\leftrightarrow-\gamma$, $I_3^5$ is obtained by multiplying $I_3^1$ with $T^{-(\beta+\delta)}$ and changing the shifts $\beta\leftrightarrow-\delta$, and $I_3^6$ is obtained by multiplying $I_3^1$ with $T^{-(\alpha+\beta+\gamma+\delta)}$ and changing the shifts $\alpha\leftrightarrow-\gamma$ and $\beta\leftrightarrow-\delta$. Hence
\begin{eqnarray}\label{U}
I_3(\alpha,\beta,\gamma,\delta)&=&\frac{\widehat{w}(0)}{6(\log y_3)^4}\frac{d^8}{dx_1^2dx_2^2dx_3^2dx_4^2}\bigg[\int_{0}^{1}\\
&&\qquad U(\underline{x})(1-u)^3P_3(x_1+x_2+u)P_3(x_3+x_4+u)du\bigg]_{\underline{x}=0}+O_\varepsilon(\mathcal{L}^{-1+\varepsilon}),\nonumber
\end{eqnarray}
where
\begin{eqnarray*}
U&=&\frac{y_3^{\alpha x_1+\beta x_2+\gamma x_3+\delta x_4}}{(\alpha+\gamma)(\alpha+\delta)(\beta+\gamma)(\beta+\delta)}-\frac{T^{-(\alpha+\gamma)}y_3^{-\gamma x_1+\beta x_2-\alpha x_3+\delta x_4}}{(\alpha+\gamma)(-\gamma+\delta)(\beta-\alpha)(\beta+\delta)}\\
&&\qquad-\frac{T^{-(\alpha+\delta)}y_3^{-\delta x_1+\beta x_2+\gamma x_3-\alpha x_4}}{(-\delta+\gamma)(\alpha+\delta)(\beta+\gamma)(\beta-\alpha)}-\frac{T^{-(\beta+\gamma)}y_3^{\alpha x_1-\gamma x_2-\beta x_3+\delta x_4}}{(\alpha-\beta)(\alpha+\delta)(\beta+\gamma)(-\gamma+\delta)}\\
&&\qquad-\frac{T^{-(\beta+\delta)}y_3^{\alpha x_1-\delta x_2+\gamma x_3-\beta x_4}}{(\alpha+\gamma)(\alpha-\beta)(-\delta+\gamma)(\beta+\delta)}+\frac{T^{-(\alpha+\beta+\gamma+\delta)}y_3^{-\gamma x_1-\delta x_2-\alpha x_3-\beta x_4}}{(\alpha+\gamma)(\gamma+\beta)(\delta+\alpha)(\beta+\delta)}.
\end{eqnarray*}
We write
\begin{eqnarray*}
\frac{y_3^{\alpha x_1+\beta x_2+\gamma x_3+\delta x_4}}{(\alpha+\gamma)(\alpha+\delta)(\beta+\gamma)(\beta+\delta)}&=&\frac{y_3^{\alpha x_1+\beta x_2+\gamma x_3+\delta x_4}}{(\alpha+\gamma)(-\gamma+\delta)(\beta-\alpha)(\beta+\delta)}\\
&&\qquad-\frac{y_3^{\alpha x_1+\beta x_2+\gamma x_3+\delta x_4}}{(-\gamma+\delta)(\alpha+\delta)(\beta+\gamma)(\beta-\alpha)}
\end{eqnarray*}
and
\begin{eqnarray}\label{swap1}
\frac{T^{-(\alpha+\beta+\gamma+\delta)}y_3^{-\gamma x_1-\delta x_2-\alpha x_3-\beta x_4}}{(\alpha+\gamma)(\alpha+\delta)(\beta+\gamma)(\beta+\delta)}&=&\frac{T^{-(\alpha+\beta+\gamma+\delta)}y_3^{-\gamma x_1-\delta x_2-\alpha x_3-\beta x_4}}{(\alpha+\gamma)(-\gamma+\delta)(\beta-\alpha)(\beta+\delta)}\nonumber\\
&&\qquad-\frac{T^{-(\alpha+\beta+\gamma+\delta)}y_3^{-\gamma x_1-\delta x_2-\alpha x_3-\beta x_4}}{(-\gamma+\delta)(\alpha+\delta)(\beta+\gamma)(\beta-\alpha)}.
\end{eqnarray}
Notice that we can change the roles of $x_1$ with $x_2$, or of $x_3$ with $x_4$ in any term of $U(\underline{x})$ without affecting the value of $I_3(\alpha,\beta,\gamma,\delta)$ in \eqref{U}. Applying both changes to the last term in \eqref{swap1}, we can replace $U(\underline{x})$ with
\begin{eqnarray*}
&&\frac{y_3^{\alpha x_1+\beta x_2+\gamma x_3+\delta x_4}}{(-\gamma+\delta)(\beta-\alpha)}\bigg(\frac{1-(Ty_3^{x_1+x_3})^{-(\alpha+\gamma)}}{\alpha+\gamma}\bigg)\bigg(\frac{1-(Ty_3^{x_2+x_4})^{-(\beta+\delta)}}{\beta+\delta}\bigg)\\
&&\qquad-\frac{y_3^{\alpha x_1+\beta x_2+\gamma x_3+\delta x_4}}{(-\gamma+\delta)(\beta-\alpha)}\bigg(\frac{1-(Ty_3^{x_1+x_4})^{-(\alpha+\delta)}}{\alpha+\delta}\bigg)\bigg(\frac{1-(Ty_3^{x_2+x_3})^{-(\beta+\gamma)}}{\beta+\gamma}\bigg).
\end{eqnarray*}
Using \eqref{int} we then get
\begin{eqnarray*}
I_3(\alpha,\beta,\gamma,\delta)&=&\frac{\mathcal{L}^2\widehat{w}(0)}{6(\log y_3)^4}\frac{d^8}{dx_1^2dx_2^2dx_3^2dx_4^2}\bigg[\mathop{\int}_{[0,1]^3}\frac{V_1(\underline{x},t_1,t_2)-V_2(\underline{x},t_1,t_2)}{(-\gamma+\delta)(\beta-\alpha)}\\
&& \qquad(1-u)^3P_3(x_1+x_2+u)P_3(x_3+x_4+u)dt_1dt_2du\bigg]_{\underline{x}=0}+O_\varepsilon(\mathcal{L}^{-1+\varepsilon}),
\end{eqnarray*}
where
\begin{eqnarray*}
V_1=y_3^{\alpha x_1+\beta x_2+\gamma x_3+\delta x_4}(Ty_3^{x_1+x_3})^{-(\alpha+\gamma)t_1}(Ty_3^{x_2+x_4})^{-(\beta+\delta)t_2}\big(1+\vartheta_3(x_1+x_3)\big)\big(1+\vartheta_3(x_2+x_4)\big)
\end{eqnarray*}
and
\begin{eqnarray*}
V_2=y_3^{\alpha x_1+\beta x_2+\gamma x_3+\delta x_4} (Ty_3^{x_1+x_4})^{-(\alpha+\delta)t_1}(Ty_3^{x_2+x_3})^{-(\beta+\gamma)t_2}\big(1+\vartheta_3(x_1+x_4)\big)\big(1+\vartheta_3(x_2+x_3)\big).
\end{eqnarray*}

Again notice that $I_3(\alpha,\beta,\gamma,\delta)$ is unchanged if we swap any of these pairs of variables $x_1\leftrightarrow x_2$, $x_3\leftrightarrow x_4$ and $t_1\leftrightarrow t_2$ in $V_1(\underline{x},t_1,t_2)$ or $V_2(\underline{x},t_1,t_2)$. We next replace $V_1-V_2$ in the integrand with
\begin{eqnarray*}
&&\tfrac{1}{2}\Big(V_1(x_1,x_2,x_3,x_4,t_1,t_2)-V_2(x_2,x_1,x_3,x_4,t_2,t_1)-V_2(x_1,x_2,x_4,x_3,t_1,t_2)\\
&&\qquad\qquad+V_1(x_2,x_1,x_4,x_3,t_2,t_1)\Big),
\end{eqnarray*}
which is
\begin{eqnarray*}
&&\frac{\big(1+\vartheta_3(x_1+x_3)\big)\big(1+\vartheta_3(x_2+x_4)\big)}{2}\bigg(y_3^{\alpha x_1+\beta x_2+\gamma x_3+\delta x_4}(Ty_3^{x_1+x_3})^{-(\alpha+\gamma)t_1}(Ty_3^{x_2+x_4})^{-(\beta+\delta)t_2}\\
&&\qquad-y_3^{\alpha x_2+\beta x_1+\gamma x_3+\delta x_4} (Ty_3^{x_2+x_4})^{-(\alpha+\delta)t_2}(Ty_3^{x_1+x_3})^{-(\beta+\gamma)t_1}\\
&&\qquad\qquad-y_3^{\alpha x_1+\beta x_2+\gamma x_4+\delta x_3} (Ty_3^{x_1+x_3})^{-(\alpha+\delta)t_1}(Ty_3^{x_2+x_4})^{-(\beta+\gamma)t_2}\\
&&\qquad\qquad\qquad+y_3^{\alpha x_2+\beta x_1+\gamma x_4+\delta x_3}(Ty_3^{x_2+x_4})^{-(\alpha+\gamma)t_2}(Ty_3^{x_1+x_3})^{-(\beta+\delta)t_1}\bigg)\\
&&\quad=\frac{\big(1+\vartheta_3(x_1+x_3)\big)\big(1+\vartheta_3(x_2+x_4)\big)}{2}y_3^{\alpha x_1+\beta x_2+\gamma x_3+\delta x_4}\\
&&\qquad\qquad(Ty_3^{x_1+x_3})^{-(\alpha+\gamma)t_1}(Ty_3^{x_2+x_4})^{-(\beta+\delta)t_2}\bigg(1-\Big(T^{t_1-t_2}y_3^{-x_1+x_2+(x_1+x_3)t_1-(x_2+x_4)t_2}\Big)^{-(\beta-\alpha)}\bigg)\\
&&\qquad\qquad\qquad\quad\bigg(1-\Big(T^{t_1-t_2}y_3^{-x_3+x_4+(x_1+x_3)t_1-(x_2+x_4)t_2}\Big)^{-(\delta-\gamma)}\bigg).
\end{eqnarray*}
Using \eqref{int} again and simplifying we obtain Lemma \ref{lemmac3}.

\end{document}